\newcommand{\vc}[1]{\overrightarrow{#1}}
\newcommand{\be}{\begin{equation}}
\newcommand{\ee}{\end{equation}}
\def\R{\mathbb{R}}
\def\e1{\vc{e}_1}
\def\e2{\vc{e}_2}
\def\e3{\vc{e}_3}
\def\dis{\displaystyle}
\def\section{\@startsection{section}{1}%
  \z@{1.5\linespacing\@plus\linespacing}{.5\linespacing}%
  {\normalfont\bfseries\large\centering}}
\newtheorem{theorem}{Theorem}[section]
\newtheorem{lemma}[theorem]{Lemma}
\newtheorem{proposition}[theorem]{Proposition}
\theoremstyle{remark}
\numberwithin{equation}{section}
\newcommand{\bea}{\begin{eqnarray}}
\newcommand{\eea}{\end{eqnarray}}
\newcommand{\bee}{\begin{eqnarray*}}
\newcommand{\eee}{\end{eqnarray*}}
\def\na{\nabla}
\def\e{\varepsilon}
\def\NN{\mathbb{N}}
\def\RR{\mathbb{R}}
\def\ni{\noindent}
\def\bs{\bigskip}
\def\eps{\varepsilon}
\def\bar#1{{\overline #1}}
\def\supess{\mathop{\operator@font Sup\,ess}}
\title[Stability of isotropic steady states for relativistic Vlasov-Poisson]{Stability of isotropic steady states for the relativistic Vlasov-Poisson system}
\author{Cyril Rigault}
\address{IRMAR, Universit\'e Rennes 1, France}
\email{cyril.rigault@univ-rennes1.fr}
\begin{document}



\begin{abstract} In this work, we study the orbital stability of stationary solutions to the relativistic Vlasov-Manev system. This system is a kinetic model describing the evolution of a stellar system subject to its own gravity with some relativistic corrections. For this system, the orbital stability was proved for isotropic models constructed as minimizers of the Hamiltonian under a subcritical condition. We obtain here this stability for all isotropic models by a non-variationnal approach. We use  here a new method developed in \cite{LMR-inv} for the classical Vlasov-Poisson system. We derive the stability from the monotonicity of the Hamiltonian under suitable generalized symmetric rearrangements and from a Antonov type coercivity property. We overcome here two new difficulties : the first one is the  {\it a priori} non-continuity of the potentials, from which a greater control of the rearrangements is necessary. The second difficulty is related to the homogeneity breaking which does not give the boundedness of the kinetic energy. Indeed, in this paper, we does not suppose any subcritical condition satisfied by the steady states.

\end{abstract}

\maketitle

\bigskip

\section{Introduction and main results} 
\subsection{Introduction to the relativistic Vlasov-Poisson system} 
The relativistic Vlasov-Poisson system in dimension three reads:
\be \left\{\begin{array}{l}
 \dis \partial_t f + \frac{v}{\sqrt{1+|v|^2}}\cdot \na_x f -\na_x \phi_f \cdot \na_v f=0, \ \ \ \RR_+\times \RR^3\times\RR^3,\\ \\
 f(t=0,x,v)=f_0(x,v)\geq 0,
\end{array}\right. \label{vpr}\ee 
where the gravitational Poisson field $\phi_f$ is defined for all $t\in\RR^+$ by
\be \left\{ \begin{array}{l}
 \dis \Delta \phi_f (t,x)= \rho_f (t,x)=\int_{\RR^3} f(t,x,v) dv, \\
 \dis \phi_f(t,x)\rightarrow 0 \mbox{ as } |x|\rightarrow +\infty.
 \end{array} \right. \label{def-phi1}\ee
This expression is equivalent to:
 \be \phi_f(x)=-\frac{1}{4\pi |x|}\ast \rho. \label{def-phi2}\ee
 This nonlinear transport system describes the evolution of a stellar system subject to its own gravity with some relativistic corrections.

The Cauchy problem of equations of Vlasov-Poisson type is not yet well understood for weak solutions: although the existence of local weak solutions has been proved , the question of its uniqueness remains unknown (see \cite{BGP}). Moreover, for the relativistic case \eqref{vpr}, in the most recent works about the Cauchy problem of smooth solutions, only radial smooth initial data have given results (see \cite{GS} and \cite{KTZ}). In particular, Glassey and Schaeffer \cite{GS} have proved for radial smooth solutions that a blow-up in finite time $T$ is characterized by the blow-up of the kinetic energy
 $$\int_{\RR^6} \left(\sqrt{|v|^2+1}-1 \right)f(t,x,v)dxdv \rightarrow +\infty \mbox{ as } t \rightarrow T.$$

 There holds for smooth enough solutions of system \eqref{vpr} some important conservation properties: first, the total Hamiltonian
\be \mathcal{H}(f(t))=\int_{\RR^6} \left(\sqrt{|v|^2+1}-1 \right)f(t,x,v)dxdv-\frac{1}{2}\int_{\RR^3}\left|\na  \phi_f(t,x)\right|^2 dx, \label{def-H}\ee
is preserved in time and second we have the conservation of all Casimir functions: for all $\beta \in \mathcal{C}^1(\RR_+,\RR_+)$ such that $\beta(0)=0$,
\be \int_{\RR^6} \beta(f(t,x,v))dx dv = \int_{\RR^6} \beta(f_0(x,v))dx dv. \label{Casimir}\ee
The property \eqref{Casimir} is equivalent to the equimeasurability:
\be \forall t\geq 0, \ \ \ \mu_{f(t)}=\mu_{f_0}, \label{equi} \ee
where the distribution function $\mu_f$ is defined by
\be \forall s\geq 0, \ \ \mu_f(s)=\mbox{meas} \{(x,v)\in \RR^6, \ f(x,v)>s\}. \label{def-mu}\ee
Remark that in particular, the $L^p$ norms of $f$ are conserved.

In this paper, we will consider weak solutions to \eqref{vpr} in the natural energy space
\be \mathcal{E}_p=\left\{ f\geq 0 \mbox{ with } f\in L^1(\RR^6)\cap L^p(\RR^6) \mbox{ and } \sqrt{1+|v|^2}f \in L^1(\RR^6) \right\}. \label{def-space} \ee

For all $f_0\in\mathcal{E}_p$, from classical kinetic Cauchy theory, the system \eqref{vpr} admits a local renormalized solution $f(t)$ in the sens of Diperna-Lions \cite{DL1,DL2}. This solution satisfies \eqref{Casimir} and \eqref{equi} but the conservation of Hamiltonian does not occur in general: it only remains
\be \forall t\geq 0, \ \ \ \mathcal{H}(f(t))\leq\mathcal{H}(f_0). \label{decroiH}\ee
Note that, from this nonincreasing property and from \eqref{Casimir}, the kinetic energy will be bounded if $f_0$ satisfies a certain subcritical condition
\be C_p \left\| f_0 \right\|_{L^1}^\frac{2p-3}{3(p-1)}  \left\| f_0 \right\|_{L^p}^\frac{p}{3(p-1)} <1. \label{subcrit}\ee
Indeed, from Hardy-Littlewood-Sobolev inequality, the potential energy satisfies
\be \textrm{for all } f\in \mathcal{E}_p, \ \ \frac{1}{2} \int_{\RR^3}\left|\na  \phi_f \right|^2 dx\leq C_p \left\| f \right\|_{L^1}^\frac{2p-3}{3(p-1)}  \left\| f \right\|_{L^p}^\frac{p}{3(p-1)}  \left\| \sqrt{1+|v|^2} f \right\|_{L^1} \label{interpolation}\ee
where $C_p$ is defined as the best corresponding constant and thus the kinetic energy can be controled thanks to 
\be  \mathcal{H}(f) \geq \left(1- C_p \left\| f \right\|_{L^1}^\frac{2p-3}{3(p-1)}  \left\| f \right\|_{L^p}^\frac{p}{3(p-1)} \right) \left\| \sqrt{1+|v|^2} f \right\|_{L^1}-\left\| f \right\|_{L^1}.\label{cont-kin}\ee
Hence we have global existence for the system \eqref{vpr} as soon as $f_0$ satisfies the subcritical condition \eqref{subcrit}.

\subsection{Main results and strategy of the proof} Our main result gives the nonlinear stability of a large class of stationary solutions for the relativistic Vlasov-Poisson equation. We  adapt here some techniques introduced for the classical Vlasov-Poisson equation \cite{LMR-inv}.

Note (see \cite{BFH} for the Vlasov-Poisson system) that the isotropic steady states of the system \eqref{vpr} are the functions of the form
$$Q(x,v)=F(e),$$
where $e$ is the miscroscopic energy given by
$$e(x,v)= \sqrt{|v|^2+1}-1+\phi_Q(x).$$
An important question which has been the subject of several works is the question of nonlinear stability of these steady states when $F$ satisfies
$$\frac{\partial F}{\partial e} <0.$$
For some of these stationary solutions, built as minimizers of suitable functional, the concentration-compactness Lemma (\cite{L1}- \cite{L2}) allows to prove their stability under a subcritical condition of the type \eqref{subcrit}, see \cite{LM2}. Here we get the nonlinear stability of all these stationary solutions in the energy space $\mathcal{E}_p$, defined by \eqref{def-space}, with respect to its natural norm:
$$\|f\|_{\mathcal{E}_p}=\|f\|_{L^1} + \|f\|_{L^p} +\|  \sqrt{1+|v|^2} f\|_{L^1}.$$

\begin{theorem}[Stability of spherical models]\label{thm} Let $Q$ be a spherical, continuous, nonnegative, non zero, compactly supported steady solution to \eqref{vpr}. Assume that $Q$ is a nonincreasing function of its microscopic energy, i.e. there exists a continuous function $F:\RR\rightarrow \RR_+$ such that for all $(x,v)\in \RR^6$
\be Q(x,v)=F\left(\sqrt{|v|^2+1}-1+\phi_Q(x)  \right), \label{exp-Q}\ee
and there exists $e_0<0$ such that $F(e)=0$ for $e \geq e_Q$, $F$ is decreasing and $\mathcal{C}^1$ on $(-\infty,e_Q)$.
Let $p>\frac{3}{2}$. Then $Q$ is orbitally stable in the $\mathcal{E}_p$-norm by the flow \eqref{vpr}: for all $\varepsilon>0$ there exists $\eta>0$ such that the following holds true. Let $f_0 \in \mathcal{E}_p$ be such that 
\be \left\| f_0-Q \right\|_{\mathcal{E}_p}\leq \eta \label{f0-eta}\ee
and let $f(t)$ be a corresponding renormalized solution to \eqref{vpr} on $[0,T)$. Then for all $t\in[0,T)$ there exists a continuous translation shift $z(t)$ such that
 \be \left\| f(t,x,v)-Q(x-z(t),v) \right\|_{\mathcal{E}_p}\leq \varepsilon. \label{stab-eps}\ee
\end{theorem}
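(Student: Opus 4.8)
\medskip

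The plan is to adapt the non-variational scheme of \cite{LMR-inv}: rather than studying $\mathcal{H}$ directly on the infinite dimensional constraint set $\{\mu_f=\mu_Q\}$, one studies a \emph{reduced functional on potentials}. First I would develop the machinery of generalized symmetric rearrangements in the relativistic variables. Given a potential $\phi$ in the natural class generated by finite energy densities and the fixed equimeasurability class of $Q$, let $f_\phi$ be the unique function equimeasurable to $Q$ that is a nonincreasing function of the microscopic energy $e_\phi(x,v)=\sqrt{1+|v|^2}-1+\phi(x)$; it is characterised by the bathtub principle as the minimiser of $g\mapsto\int_{\RR^6} e_\phi\, g\,dxdv=\int_{\RR^6}(\sqrt{1+|v|^2}-1)g\,dxdv+\int_{\RR^3}\phi\,\rho_g\,dx$ among $\{\mu_g=\mu_Q\}$, with the rigidity that equality forces $g=f_\phi$. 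Combining this with the identity $\int|\na\phi_f|^2=-\int\phi_f\rho_f$ gives, for every $f$ with $\mu_f=\mu_Q$,
\[
\mathcal{H}(f)\;\ge\;\mathcal{J}(\phi_f),\qquad \mathcal{J}(\phi):=\int_{\RR^6}\Big(\sqrt{1+|v|^2}-1\Big)f_\phi\,dxdv+\int_{\RR^3}\phi\,\rho_{f_\phi}\,dx+\tfrac12\int_{\RR^3}|\na\phi|^2\,dx,
\]
and since $Q=f_{\phi_Q}$ by \eqref{exp-Q} (as $F$ is decreasing), there holds $\mathcal{J}(\phi_Q)=\mathcal{H}(Q)$. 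The same construction relative to the class of $f_0$ produces a functional $\mathcal{J}_{f_0}$, and one shows $|\mathcal{J}_{f_0}(\phi)-\mathcal{J}(\phi)|\le\omega(\|f_0-Q\|_{\mathcal{E}_p})$ uniformly in $\phi$ on bounded sets, where $\omega(\eta)\to0$ as $\eta\to0$.

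\medskip

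The second ingredient is the local coercivity of $\mathcal{J}$ near $\phi_Q$ modulo translations: on a neighbourhood of $\phi_Q$,
\[
\mathcal{J}(\phi)-\mathcal{J}(\phi_Q)\;\ge\; c\,\inf_{z\in\RR^3}\big\|\na\phi-\na\phi_Q(\cdot-z)\big\|_{L^2}^2\;-\;(\text{lower order terms}),\qquad c>0.
\]
I would derive this from an Antonov type estimate for the relativistic free streaming operator $T_Q=\frac{v}{\sqrt{1+|v|^2}}\cdot\na_x-\na_x\phi_Q\cdot\na_v$: passing to action--angle variables for the radial motion, which is governed by the effective one dimensional Hamiltonian $\sqrt{1+p_r^2+L/r^2}-1+\phi_Q(r)$ and is still periodic on the support of $Q$, one proves the Poincaré inequality along characteristics that bounds the quadratic form $g\mapsto\langle|F'(e_Q)|^{-1}T_Q g,\,T_Q g\rangle$ from below by $\|T_Q g\|^2$ on the orthogonal complement of the three neutral directions $\pa_{x_i}Q$. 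Inserting this spectral gap into the second order expansion of $\mathcal{J}$ at $\phi_Q$ yields the displayed bound. The relativistic speed enters only through the effective radial potential, so the structural argument of \cite{LMR-inv} carries over, at the price of redoing the monotonicity and invertibility computations in relativistic variables.

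\medskip

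The last step is a continuity (bootstrap) argument, where the two announced difficulties appear. Fix $\varepsilon>0$ and let $T^\ast\le T$ be maximal such that $\inf_z\|f(t,\cdot)-Q(\cdot-z)\|_{\mathcal{E}_p}\le\varepsilon$ on $[0,T^\ast)$. On this interval the $\mathcal{E}_p$-closeness itself bounds the kinetic energy $\|\sqrt{1+|v|^2}f(t)\|_{L^1}$, hence via the interpolation inequality \eqref{interpolation} and elliptic estimates bounds $\rho_{f(t)}$ and $\na\phi_{f(t)}$ in the relevant Lebesgue spaces, which is exactly what makes the rearrangement machinery of the first step applicable uniformly on $[0,T^\ast)$. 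Since $\mathcal{H}$ is non-increasing along the flow and is continuous near $Q$ in $\mathcal{E}_p$ (the kinetic term is part of the norm, and $f\mapsto\na\phi_f$ is continuous there by \eqref{interpolation}), we get $\mathcal{H}(f(t))\le\mathcal{H}(f_0)\le\mathcal{H}(Q)+\omega(\eta)$; combining with $\mu_{f(t)}=\mu_{f_0}$ and the comparison of $\mathcal{J}_{f_0}$ with $\mathcal{J}$ gives $\mathcal{J}(\phi_{f(t)})\le\mathcal{H}(Q)+\omega(\eta)$. Coercivity then forces $\inf_z\|\na\phi_{f(t)}-\na\phi_Q(\cdot-z)\|_{L^2}\le\omega(\eta)$, and the smallness of the rearrangement defect $\mathcal{H}(f(t))-\mathcal{J}(\phi_{f(t)})$, together with the rigidity of the bathtub inequality and the continuity of $\phi\mapsto f_\phi$, upgrades this to $\inf_z\|f(t)-Q(\cdot-z)\|_{\mathcal{E}_p}\le\omega(\eta)$ (the $L^p$ component being controlled by equimeasurability and interpolation). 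Choosing $\eta$ small makes the right-hand side $\le\varepsilon/2$, so $T^\ast=T$; taking $z(t)$ a near-minimising shift and using the time-continuity of $f(t)$ together with the local uniqueness of the minimiser (valid because $Q$ is compactly supported and not translation invariant) gives a continuous $z(t)$.

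\medskip

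\noindent\textbf{Main obstacle.} I expect the hard points to be precisely the two flagged in the abstract. First, the potential $\phi_{f(t)}$ of a generic $f(t)\in\mathcal{E}_p$, $p>3/2$, is only controlled in $W^{2,r}$ with $r=\tfrac{4p-3}{3p-2}<\tfrac32$, hence is neither continuous nor bounded; so the well-posedness, the rigidity and the continuity of $\phi\mapsto f_\phi$, and the estimates on the measures of the sublevel sets $\{e_\phi<s\}$, must all be obtained under this weak regularity only --- this is the greater control of the rearrangements that replaces the $C^1$ potentials available in the classical case. Second, since no subcritical condition is imposed, the kinetic energy is not a priori bounded and $\mathcal{H}$ is not conserved, so the bootstrap cannot be closed by extracting limits on a compact energy level: one must show that $\mathcal{E}_p$-closeness alone is self-propagating, which forces the quantitative modulus-of-continuity form of every estimate above. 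The Antonov coercivity, although computationally heavier in the relativistic variables, should follow the template of \cite{LMR-inv}.
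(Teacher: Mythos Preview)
Your overall architecture matches the paper's: reduce $\mathcal{H}$ to a functional $J$ on potentials via the generalized rearrangement $f\mapsto f^{*\phi}$, prove local coercivity of $J$ at $\phi_Q$, and close by a continuity/bootstrap argument. The way you handle the two flagged difficulties (non-continuous potentials, no a priori kinetic-energy bound) is also essentially what the paper does.

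The main divergence is in the proof of coercivity. You propose to attack $D^2J(\phi_Q)$ directly on general perturbations, using action--angle variables for the relativistic radial motion with angular momentum $L$ and an Antonov/Poincar\'e inequality along characteristics, modding out the three neutral modes $\partial_{x_i}Q$. The paper instead first \emph{reduces to radially symmetric potentials}: since $J(\phi)$ depends on $\phi$ only through the Jacobian $a_\phi$ and $\|\nabla\phi\|_{L^2}^2$, one has $J(\phi^{\#})\le J(\phi)$ by P\'olya--Szeg\H{o}, where $\phi^{\#}$ is the Schwarz rearrangement of $\phi$. Coercivity is then proved only for radial $h$, where the second variation is treated by a H\"ormander-style argument in the two variables $(r,e)$ --- an explicit operator $Tf=\partial_r f/(r^2 u\sqrt{1+u^2})$ and a Hardy-type inequality obtained by computing $T^2g/g$ for $g=r^3u^3$ --- with no angular momentum entering at all. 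The passage back from radial to general potentials is handled by the Burchard--Guo compactness result for the equality case of P\'olya--Szeg\H{o}, which supplies the translation $x_n$ a posteriori. This reduction is a genuine simplification: the Taylor expansion of $J$ with uniform remainder (needed to turn positivity of $D^2J$ into a quantitative lower bound) is only established in the paper for radial potentials, and the neutral modes never have to be handled explicitly. Your action--angle route may well work, but it is not what is in \cite{LMR-inv} either, and you would have to produce the uniform Taylor remainder on the full (non-radial) potential space, which is where the low regularity of $\phi$ bites hardest.

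A smaller point: your upgrade from $\|\nabla\phi_{f(t)}-\nabla\phi_Q(\cdot-z)\|_{L^2}\to 0$ to $\|f(t)-Q(\cdot-z)\|_{\mathcal{E}_p}\to 0$ is stated as ``rigidity of the bathtub inequality plus continuity of $\phi\mapsto f_\phi$''. The paper does not rely on any continuity of $\phi\mapsto f_\phi$; instead it runs a quantitative layer-cake argument on the defect $T_n=\int e_Q(f_n-f_n^{*\phi_Q})$, comparing the sets $\{f_n\le t<f_n^{*\phi_Q}\}$ and $\{f_n\le t<Q\}$ and using the strict monotonicity of $F$ to show $\mathbf{1}_{\{f_n\le t<Q\}}\to 0$ a.e., which yields $(Q-f_n)_+\to 0$ in $L^1$; equimeasurability then closes the $L^1$, $L^p$ and kinetic-energy gaps. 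This is more robust than invoking a continuity property of the rearrangement map, which is delicate under the weak regularity of $\phi$ that you yourself flag.
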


In the radial case, since a blow-up in finite time is equivalent to a blow-up of the kinetic energy, Theorem \ref{thm} provides global existence when the initial data is near the (non necessary subcritical) function $Q$. The existence of such $Q$ away from all subcritical class of functions is an open problem.

\bs
\ni
{\bf Strategy of the proof:} 
This proof uses arguments developed in \cite{LMR-inv} for the classical gravitational Vlasov-Poisson system. But here two new main difficulties appear: the first one is that the studied class of functions in the energy space $\mathcal{E}_p$ does not imply the boundedness of the potentials and the second one is related to the breaking homogeneity character of the relativistic problem. Let us give the global strategy of the proof.

For $f\in\mathcal{E}_p$, we know that there exists a nonnegative function on $\RR_+$, $f^\ast$, which is the Schwarz symmetrization of $f$, such that
$$\forall s\geq 0, \ \ \mu_f(s)=\mu_{f^\ast}(s).$$
We can similarly build a rearrangement with respect to the microscopic energy
$$e(x,v)= \sqrt{|v|^2+1}-1+\phi(x),$$
for a non zero potential $\phi$. This rearrangement $f^{\ast \phi}$ is defined by
$$f^{\ast \phi}=f^\ast \circ a_\phi \mbox{ with } a_\phi(\lambda)=\mbox{meas}\{(x,v),\ e(x,v)<\lambda\}$$
and satisfies 
$$\mu_f=\mu_{f^{\ast \phi}}, \ \ \mathcal{H}(f^{\ast \phi_f})\leq \mathcal{H}(f) \mbox{ and } Q=Q^{\ast \phi_Q}.$$
Note that, in the classical Vlasov-Poisson case, this monotonicity of the Hamiltonian was observed in the physics literature \cite{LB,Gar,WZS,Aly}.
Now by defining the functionnal $J$ by
$$J(\phi)=\mathcal{H}(Q^{\ast \phi})+\frac{1}{2}\left\| \nabla \phi_{Q^{\ast \phi}} -\nabla \phi \right\|_{L^2}^2,$$
we shall prove that
\be \mathcal{H}(f)-\mathcal{H}(Q)\geq J(\phi_{f})-J(\phi_Q)+ \int_0^{+\infty} a_{\phi_{f} }^{-1}(s) (f^\ast(s)-Q^\ast(s)) ds.
\label{linkHJ}\ee
Hence we can reduce our problem to the study of the functionnal $J$ which only depends on the potential $\phi$. Moreover, from the P\'olya-Szegö inequality, we get
$$J(\phi^\ast) \leq J(\phi)$$
where $\phi^\ast$ is the Schwarz symmetrization of $\phi$, and therefore the study of $J$ can be restricted to radial modes. We then use the Burchard-Guo compactness result \cite{BG} to get the compactness of $\phi$ from that of $\phi^\ast$. 

Our proof is performed in two steps:

\ni
(i) Proof of the local coercivity of $J$ on radial potentials $\phi$ near $\phi_Q$.

\ni
(ii) Proof of Theorem \ref{thm} via a local compactness proposition thanks to \eqref{linkHJ}.

\ni
This second step will be deduced from \eqref{linkHJ} and equimeasurability arguments derived from the rigidity of the flow. The local coercivity stated in the first step will follow from the Taylor expansion near $\phi_Q$ for radial potentials
$$J(\phi)-J(\phi_Q)=\frac{1}{2}D^2 J(\phi_Q)(\phi-\phi_Q,\phi-\phi_Q)+o \left( \| \na \phi-\na \phi_Q \|_{L^2}^2\right)$$
and from the strict coercivity of the quadratic form $D^2 J(\phi_Q)$. To prove this coercivity, we follow the same lines as \cite{LMR-inv} where a Poincaré-like inequality was proved for the classical Vlasov-Poisson system. This inequality is a generalisation of Antonov type coercivity estimate and is based on a Hörmander approach \cite{H1,H2}. The main new difficulties here, lie in the control of the jacobian $a_\phi$ which can't be bounded in general because of the non boundedness of the potential and the homogeneity breaking. \\

This papers is organized as follows. Section \ref{section-J} deals with the proof of the local coercivity of the functional $J$ near $\phi_Q$ stated in Proposition \ref{prop-coercivity}. By differentiating the functional $J$ at $\phi_Q$ in Subsection \ref{soteJ}, we obtain the second order Taylor expansion stated in Lemma \ref{taylor-J}. Then, in Subsection \ref{poppc}, from Hardy type control argument, we deduce the Proposition \ref{prop-coercivity} for radially symmetric potential and we finally generalize this proposition for all potential by using a compactness argument \cite{BG}.Section \ref{NSQ} is devoted to the proof of Theorem \ref{thm} from the coercivity result stated in Section \ref{section-J}. First, in Subsection \ref{sub-compact}, we prove the local compactness of local minimizing sequences, Proposition \ref{comp-fonct}. Then, in Subsection \ref{sub-stab}, by using this compactness result combined with a contradiction argument, we finally deduce the orbital stability, Theorem \ref{thm}. The several property about the generalized rearrangement used in these both sections can be found in the completed study of the rearrangement with respect of the microscopic energy stated in Appendix A and B.


\section{Coercivity of the functional $J$}\label{section-J}
The aim of this section is to prove the following proposition \ref{prop-coercivity} below. Our study is based on rearrangements with respect to the microscopic energy. To define these rearrangements, we introduce, for $q>3$,
\be \Phi_q=\left\{ \phi \in L^q(\RR^3) \textrm{ s.t. } \phi\leq 0,\ m(\phi)>0, \ \nabla \phi \in L^2(\RR^3), \ \lim_{|x|\rightarrow+\infty} \phi(x)= 0  \right\} ,\label{def-phiq} \ee
with
\be m(\phi)=\inf_{ x\in\R^3 } \left(1+|x|\right)|\phi(x)|, \label{def-m}\ee
and the norm on $\Phi_q$
\be \left\| \phi \right\|_{\Phi_q} =  \left\|  \nabla \phi \right\|_{L^2(\R^3)} + \left\| \phi \right\|_{L^q(\R^3)}. \ee
The space $\Phi_q$ is a natural space for the potential of distribution functions in $\mathcal{E}_p$, as given by the following lemma.  

\begin{lemma}\label{lem-ouf} Let $f\in\mathcal{E}_p$ non zero with $p>\frac{3}{2}$. Then the potential $\phi_f$ belongs to $\Phi_q$ for all $3<q\leq \frac{3(4p-3)}{p}(<12)$.
\end{lemma}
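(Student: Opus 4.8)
The plan is to establish the three nontrivial membership conditions for $\phi_f$ in $\Phi_q$ — namely $\phi_f\in L^q(\RR^3)$, $\nabla\phi_f\in L^2(\RR^3)$, and $m(\phi_f)>0$ — the sign condition $\phi_f\le 0$ and the decay $\phi_f(x)\to 0$ being immediate from the representation formula \eqref{def-phi2} with the nonnegative kernel. First I would record the key $L^r$-information on $\rho_f$: since $f\ge 0$ lies in $L^1\cap L^p$ with $\sqrt{1+|v|^2}\,f\in L^1$, a standard interpolation in the $v$-variable (splitting $\int f\,dv$ over $\{|v|\le R\}$ and $\{|v|>R\}$ and optimizing in $R$, exactly as in the derivation of \eqref{interpolation}) gives $\rho_f\in L^1(\RR^3)\cap L^{s}(\RR^3)$ for a range of exponents $s$ up to $s_{\max}=\tfrac{4p-3}{3p-3}$ arising from the weighted bound; more precisely one gets $\rho_f\in L^s$ for $1\le s\le \tfrac{4p-3}{3p-3}$, with the endpoint using all three norms controlling $f$. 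The hypothesis $p>\tfrac32$ guarantees $s_{\max}>1$ so this range is nontrivial.

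Next I would feed this into the Newtonian potential estimates via \eqref{def-phi2}. For $\nabla\phi_f = \tfrac{1}{4\pi}\,\tfrac{x}{|x|^3}\ast\rho_f$, the Hardy–Littlewood–Sobolev inequality applied with $\rho_f\in L^{6/5}$ yields $\nabla\phi_f\in L^2(\RR^3)$; note $6/5$ lies in the admissible range $[1,s_{\max}]$ precisely because $p>\tfrac32$ forces $\tfrac{4p-3}{3p-3}>\tfrac65$, so this is where the hypothesis on $p$ is genuinely used. For $\phi_f\in L^q$: writing $\phi_f$ as the convolution of $\rho_f$ with $c_3|x|^{-1}$ and splitting the kernel into its local part ($|x|\le 1$, which lies in $L^r$ for all $r<3$) and its tail ($|x|>1$, which lies in every $L^r$ with $r>3$), Young's inequality lets one convolve the $L^1\cap L^{s_{\max}}$ function $\rho_f$ against both pieces and land in $L^q$ for every $q$ in a range whose upper endpoint is computed by optimizing the Young exponents; a direct bookkeeping of $\tfrac1q = \tfrac1s + \tfrac1r - 1$ with $s$ up to $s_{\max}=\tfrac{4p-3}{3p-3}$ and $r$ up to (but not including) $3$ gives the claimed bound $q\le \tfrac{3(4p-3)}{p}$, and the parenthetical $<12$ is the formal limit as $p\to\infty$.

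Finally, for $m(\phi_f)>0$ I would show the pointwise lower bound $|\phi_f(x)|\ge c/(1+|x|)$. This is the genuinely quantitative point. Since $Q$ — or here $f$ — is nonzero and nonnegative, $\rho_f\ge 0$ is nonzero, so it has positive mass on some ball $B(x_0,\delta)$; for $|x|$ large, $|\phi_f(x)| = \tfrac{1}{4\pi}\int \tfrac{\rho_f(y)}{|x-y|}\,dy \ge \tfrac{1}{4\pi(|x|+|x_0|+\delta)}\int_{B(x_0,\delta)}\rho_f \ge \tfrac{c_1}{1+|x|}$ for $|x|\ge R_0$, using only that $\rho_f\in L^1$ has positive total integral on that ball. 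For $|x|\le R_0$ one argues that $|\phi_f|$ is continuous (here one needs $\rho_f\in L^s$ with $s>3/2$, available since $s_{\max}>3/2$ when $p>\tfrac32$ — in fact $\tfrac{4p-3}{3p-3}>\tfrac32 \iff p>\tfrac32$, the same threshold again) and strictly positive: $\phi_f(x)<0$ everywhere because $\rho_f$ is a nonzero nonnegative integrable function and the kernel is strictly positive, so by continuity $|\phi_f|\ge c_2>0$ on the compact set $\{|x|\le R_0\}$; combining the two regimes gives $m(\phi_f)=\inf_x (1+|x|)|\phi_f(x)|\ge \min(c_1, c_2/(1+R_0))>0$.

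I expect the main obstacle to be the bookkeeping that pins down the sharp upper exponent $q\le \tfrac{3(4p-3)}{p}$ — one must track simultaneously the best integrability of $\rho_f$ coming from the three-norm interpolation and the splitting of the Riesz kernel, and verify that every exponent invoked (the $6/5$ for HLS, the $s>3/2$ for continuity, the endpoint $s_{\max}$ for $L^q$) is legitimately available under the single standing assumption $p>\tfrac32$; the lower bound $m(\phi_f)>0$, while conceptually the new ingredient compared to the classical setting, is technically straightforward once continuity of $\phi_f$ is in hand.
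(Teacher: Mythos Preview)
Your overall architecture is right, but there is a genuine gap in the argument for $m(\phi_f)>0$, traceable to an arithmetic slip in the interpolation exponent.

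The standard kinetic interpolation with the first moment $|v|f\in L^1$ gives $\rho_f\in L^s$ for $1\le s\le \frac{4p-3}{3p-2}$, not $\frac{4p-3}{3(p-1)}$ as you write (optimize $\int_{|v|\le R}f\,dv\lesssim R^{3/p'}(\int f^p dv)^{1/p}$ against $\int_{|v|>R}f\,dv\le R^{-1}\int|v|f\,dv$, then apply H\"older in $x$). This correction has a real consequence: the correct $s_{\max}=\frac{4p-3}{3p-2}$ satisfies $s_{\max}<\frac32$ for \emph{every} $p>\frac32$ (indeed $s_{\max}\to\frac43$ as $p\to\infty$), so your claim that $\rho_f\in L^s$ for some $s>\frac32$ is false, and the continuity of $\phi_f$ that you invoke on the ball $\{|x|\le R_0\}$ is not available from this route. (Even with your stated $s_{\max}$, the equivalence ``$\frac{4p-3}{3p-3}>\frac32\iff p>\frac32$'' is wrong: it is equivalent to $p<3$.)

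The paper sidesteps this entirely. Rather than splitting into large and small $|x|$, it observes that the single bound for $|y|<R$,
\[
|\phi_f(x)|\;\ge\;\int_{|y|<R}\frac{\rho_f(y)}{4\pi|x-y|}\,dy\;\ge\;\frac{1}{4\pi(|x|+R)}\int_{|y|<R}\rho_f\,dy\;=\;\frac{M}{4\pi(|x|+R)},
\]
holds for \emph{all} $x\in\RR^3$, immediately yielding $(|x|+R)|\phi_f(x)|\ge M/(4\pi)$ and hence $m(\phi_f)>0$. No continuity of $\phi_f$ is needed. Your ``large $|x|$'' estimate is essentially this inequality; you simply did not notice it is already global.

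Two minor points. First, Young's inequality with the split kernel will only give $q<\frac{3(4p-3)}{p}$ strictly; the endpoint $q=\frac{3(4p-3)}{p}$ comes from Hardy--Littlewood--Sobolev applied at the top exponent $s_{\max}=\frac{4p-3}{3p-2}$, since then $\frac1q=\frac{1}{s_{\max}}-\frac23=\frac{p}{3(4p-3)}$. Second, with the correct $s_{\max}$ the check $\frac65\le s_{\max}$ becomes exactly $p\ge\frac32$, so the HLS bound $\nabla\phi_f\in L^2$ goes through as you intended.
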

\begin{proof} From classical interpolation methods and from the Hardy-Littlewood-Sobolev inequality, we have for $f\in\mathcal{E}_p$ with $p>1$
$$\rho_f \in L^{r}(\RR^3), \ 1\leq r\leq \frac{4p-3}{3p-2} \ \mbox{ and } \phi_f\in L^{q}(\RR^3), \ \frac{3}{2}<q\leq \frac{3(4p-3)}{p}. $$
Moreover, for $p>\frac{3}{2}$ the Hardy-Littlewood-Sobolev inequality implies that $\na \phi_f$ belongs to $L^2(\RR^3)$. We prove now that $m(\phi_f)$ defined by \eqref{def-m} is positive. Since $f$ is non zero, the density $\rho_f$ is non zero, too. Hence there exists $R>0$ such that 
$$M:=\int_{|x|<R} \rho_f(x)dx>0.$$
We have then
$$\left| \phi_f(x)\right| = \int_{\RR^3} \frac{\rho_f(y)}{4\pi|x-y|}dy \geq \int_{|y|<R} \frac{\rho_f(y)}{4\pi|x-y|}dy\geq \int_{|y|<R} \frac{\rho_f(y)}{4\pi(|x|+R)}dy.$$
Finally,
$$\left(|x|+R\right) \left| \phi_f(x)\right| \geq M,$$ which concludes the proof of lemma \ref{lem-ouf}.
\end{proof}
Now for $\phi\in \Phi_q$ and $f\in\mathcal{E}_p$ with $p>\frac{3}{2}$, we define the jacobian $a_\phi$ by
\be \forall e<0, \ \ a_\phi(e)=\textrm{meas} \left\{(x,v)\in\R^6 \ : \ \sqrt{|v|^2+1} -1 +\phi(x)<e \right\}, \label{def-jac} \ee
and the rearrangement with respect to the microscopic energy by 
\be f^{\ast \phi}= \left\{ \begin{array} {lcl}
\dis f^\ast \left( a_\phi \left( \sqrt{|v|^2+1} -1 +\phi(x)\right) \right) & \textrm{if} & \dis  \sqrt{|v|^2+1} -1 +\phi(x)<0 \\ \\
0 & \textrm{if} & \dis \sqrt{|v|^2+1} -1 +\phi(x)\geq 0,
\end{array}\right. \label{def-rearrang}\ee
where $f^\ast$ is the Schwarz-symmetrization of $f$ in $\RR^6$. In Appendix \ref{app} we recall some properties about the Schwarz symmetrization and we give all the properties we need about the Jacobian and the rearrangement with respect to the microscopic energy. In particular, $f^{\ast \phi}$ is well defined in $\mathcal{E}_p$ and the function $Q$ defined in Theorem \ref{thm} satisfies $Q=Q^{\ast {\phi_Q}}$.

Using these definitions, we introduce the functional $J$ on $\Phi_q$ defined by
\be J(\phi)=\int_{\R^6} \left(\sqrt{|v|^2+1} -1 +\phi(x) \right) Q^{\ast \phi}dx dv +\frac{1}{2} \left\|  \nabla \phi \right\|_{L^2}^2,  \ee
which is equivalent to
\be J(\phi)=\mathcal{H}(Q^{\ast \phi})+\frac{1}{2}\left\| \nabla \phi_{Q^{\ast \phi}} -\nabla \phi \right\|_{L^2}^2. \ee
We claim now the following Proposition which we prove in the next subsections.

\begin{proposition}[Local coercivity of the functional $J$]\label{prop-coercivity} There exists a constant $\delta_0>0$ such that, for all $q>3$, the following holds true. Let a sequence $\phi_n$ of $\Phi_q$ such that
\be \forall n\in \NN,\     \left\|  \nabla \phi_n -\nabla \phi_Q \right\|_{L^2} \leq \delta_0\ \mbox{ and } \ \lim_{n\rightarrow+\infty}J(\phi_n)\leq J(\phi_Q).  \label{cond-coer} \ee
Then there exists a sequence of translation shifts in space $x_n$ such that
\be  \left\|  \nabla \phi_n -\nabla \phi_Q( \cdot -x_n) \right\|_{L^2}\rightarrow 0 \mbox{ as } n\rightarrow +\infty . \label{conclusion-coer}\ee
\end{proposition}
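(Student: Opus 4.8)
The plan is to proceed in three stages, exactly mirroring the strategy announced in the introduction. First, I would reduce to the case of radially symmetric potentials. Given a sequence $\phi_n$ satisfying \eqref{cond-coer}, one forms the Schwarz symmetrizations $\phi_n^\ast$. The key point is that $J$ decreases under Schwarz symmetrization of the potential: by the P\'olya--Szeg\H{o} inequality one has $\|\nabla\phi_n^\ast\|_{L^2}\le\|\nabla\phi_n\|_{L^2}$, and the rearrangement term $\int(\sqrt{1+|v|^2}-1+\phi)Q^{\ast\phi}\,dx\,dv$ can be shown (using the bathtub principle / Hardy--Littlewood inequality together with the properties of $a_\phi$ collected in the appendices) to decrease as well, so $J(\phi_n^\ast)\le J(\phi_n)$. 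One must also check that $\phi_n^\ast$ stays within the $\delta_0$-ball around $\phi_Q$ (note $\phi_Q$ is already radial, indeed radially decreasing, since $Q$ is spherical); this uses that symmetrization contracts $L^2$ distances to the already-symmetric $\phi_Q$ together with the $L^2$ control on gradients via the $\dot H^1\hookrightarrow L^6$ embedding. Thus it suffices to prove the conclusion for radial $\phi_n$, and then to transfer compactness back: having shown $\|\nabla\phi_n^\ast-\nabla\phi_Q\|_{L^2}\to0$, one invokes the Burchard--Guo rigidity/compactness result \cite{BG} to recover, up to translations $x_n$, the convergence $\|\nabla\phi_n-\nabla\phi_Q(\cdot-x_n)\|_{L^2}\to0$.

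Second, for the radial problem I would argue by contradiction together with the second-order structure of $J$. Suppose $\phi_n$ radial, $\|\nabla\phi_n-\nabla\phi_Q\|_{L^2}\le\delta_0$, $\limsup J(\phi_n)\le J(\phi_Q)$, but $\|\nabla\phi_n-\nabla\phi_Q\|_{L^2}\not\to0$. Since $\phi_Q$ is a critical point of $J$ (this is where $Q=Q^{\ast\phi_Q}$ and the Euler--Lagrange computation of Subsection \ref{soteJ} enter), the Taylor expansion of Lemma \ref{taylor-J} gives
\be
J(\phi_n)-J(\phi_Q)=\tfrac12 D^2J(\phi_Q)(\phi_n-\phi_Q,\phi_n-\phi_Q)+o\!\left(\|\nabla\phi_n-\nabla\phi_Q\|_{L^2}^2\right).
\ee
The Antonov-type coercivity of $D^2J(\phi_Q)$ on radial perturbations — the Poincar\'e-like inequality obtained by the H\"ormander method as in \cite{LMR-inv}, giving $D^2J(\phi_Q)(h,h)\ge c\|\nabla h\|_{L^2}^2$ for radial $h$ — then forces $J(\phi_n)-J(\phi_Q)\ge \frac c4\|\nabla\phi_n-\nabla\phi_Q\|_{L^2}^2$ once $\delta_0$ is small enough that the $o(\cdot)$ term is absorbed. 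Combined with $\limsup J(\phi_n)\le J(\phi_Q)$ this yields $\|\nabla\phi_n-\nabla\phi_Q\|_{L^2}\to0$, the desired contradiction. The uniformity of $\delta_0$ in $q$ comes from the fact that all the estimates are ultimately controlled by the $\dot H^1$ norm plus the fixed data $Q$, $\phi_Q$.

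Third, I would assemble the pieces in order: (a) symmetrize and verify $J(\phi_n^\ast)\le J(\phi_n)$ and the ball constraint; (b) apply the radial coercivity to deduce $\|\nabla\phi_n^\ast-\nabla\phi_Q\|_{L^2}\to0$; (c) apply \cite{BG} to produce the shifts $x_n$. The main obstacle I expect is controlling the rearrangement term and the Jacobian $a_\phi$ uniformly: because potentials in $\Phi_q$ need not be continuous and, owing to the broken homogeneity of the relativistic kinetic energy $\sqrt{1+|v|^2}-1$, the function $a_\phi$ is not bounded, the continuity and monotonicity statements $J(\phi^\ast)\le J(\phi)$, $\phi\mapsto Q^{\ast\phi}$, and especially the estimation of the error term in the Taylor expansion, all require the delicate quantitative control of $a_\phi$ via the lower bound $m(\phi)>0$ in \eqref{def-m} — this is precisely the new difficulty flagged in the introduction and is where the bulk of the technical work (deferred to the appendices and Subsection \ref{poppc}) must go. The H\"ormander-type argument proving positivity of $D^2J(\phi_Q)$ is the other substantial ingredient, but that is a relatively direct adaptation of \cite{LMR-inv} once the functional-analytic setting is in place.
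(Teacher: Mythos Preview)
Your outline is essentially the paper's proof; two points need sharpening. First, the term $J_0(\phi)=\int e_\phi\,Q^{\ast\phi}$ is not merely decreased but \emph{invariant} under Schwarz symmetrization: by \eqref{formule-jac} the Jacobian $a_\phi$ depends only on the distribution function of $\phi$, so $a_{\phi^\ast}=a_\phi$ and hence $J_0(\phi^\ast)=J_0(\phi)$ via the representation \eqref{J0-exp}; no bathtub or Hardy--Littlewood argument is needed. This invariance is exactly what makes your step (c) work: the Burchard--Guo theorem requires the near-equality case of P\'olya--Szeg\H{o}, namely $\|\nabla\phi_n\|_{L^2}^2-\|\nabla\phi_n^\ast\|_{L^2}^2\to0$, which you did not state; it follows because $J(\phi_n)-J(\phi_n^\ast)=\tfrac12\bigl(\|\nabla\phi_n\|_{L^2}^2-\|\nabla\phi_n^\ast\|_{L^2}^2\bigr)$ exactly, and both $J$-values are squeezed to $J(\phi_Q)$.

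Second, the ball constraint for $\phi_n^\ast$ does not follow from ``symmetrization contracts $L^2$ distances'': the constraint is on $\nabla\phi$, and rearrangement is not a contraction in $\dot H^1$. The paper introduces a separate radius $\delta_1$ for the radial problem and proves by a short contradiction argument (weak $\dot H^1$ compactness together with $L^6$ contractivity of rearrangement and the Sobolev embedding) that $\|\nabla\phi-\nabla\phi_Q\|_{L^2}\le\delta_0$ forces $\|\nabla\phi^\ast-\nabla\phi_Q\|_{L^2}\le\delta_1$.
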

This coercivity of the functional $J$ near $\phi_Q$ is the first step to prove the stability of $Q$ stated in Theorem  \ref{thm}. To obtain it, on the one hand, we will look for a second order Taylor expansion of $J$ around $\phi_Q$ and, on the second hand, we will control the second derivative of $J$ at $\phi_Q$ thanks to a Poincaré-type inequality.



\subsection{Second order Taylor expansion of $J$ at $\phi_Q$}\label{soteJ}

In order to prove Proposition \ref{prop-coercivity} we give first a Taylor expansion of the functional $J$ near the potential $\phi_Q$.
\begin{lemma}[Taylor expansion of $J$]\label{taylor-J} Let $6<q<12$ and $\phi, \tilde{\phi} \in \Phi_q$. Then the function 
$$\lambda \mapsto J(\phi+\lambda(\tilde{\phi}-\phi)) $$
is twice differentiable on $[0,1]$.\\ 
Moreover, for $\phi$ in $\Phi_q$ radially symmetric, there holds the Taylor expansion near $\phi_Q$: 
\be J(\phi)-J(\phi_Q)= \frac{1}{2} D^2 J(\phi_Q)(\phi-\phi_Q,\phi-\phi_Q) + \eps(\| \nabla \phi -\nabla \phi_Q \|_{L^2}) \label{taylor-exp},\ee
where $\eps(\delta)=\circ(\delta^2)$ as $\delta \rightarrow 0$.\\ 
Finally the second derivative of $J$ at $\phi_Q$ in the direction $h$ is given by:\\
\be D^2 J(\phi_Q)(h,h)=\int_{\RR^3} \left| \nabla h \right|^2 dx - \int_{\RR^6} \left| F'(e_{\phi_Q}(x,v))\right| \left( h(x)-\Pi h (x) \right)^2 dx dv, \label{D2J} \ee
where $e_{\phi_Q}(x,v) = \sqrt{1+|v|^2}-1- \phi_Q(x)$ and $\Pi h$ is the projector defined by
\be \Pi h(x)= \frac {\dis \int_{\RR^3} \left( \left( 1+e_{\phi_Q}(x,v)-\phi_Q(y) \right)_+^2-1\right)_+^\frac{1}{2}  \left( 1+e_{\phi_Q}(x,v)-\phi_Q(y)  \right) h(y) dy }{  \dis \int_{\RR^3} \left( \left( 1+e_{\phi_Q}(x,v)-\phi_Q(y) \right)_+^2-1\right)_+^\frac{1}{2}  \left(  1+e_{\phi_Q}(x,v)-\phi_Q(y)  \right) dy   }. \label{projector}\ee
\end{lemma}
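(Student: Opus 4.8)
The plan is to compute the first and second derivatives of $\lambda \mapsto J(\phi + \lambda(\tilde\phi - \phi))$ directly from the formula
$$J(\psi)=\int_{\R^6}\bigl(\sqrt{1+|v|^2}-1+\psi(x)\bigr)Q^{\ast\psi}\,dx\,dv+\tfrac12\|\nabla\psi\|_{L^2}^2,$$
treating the two summands separately. The quadratic term $\tfrac12\|\nabla\psi\|_{L^2}^2$ is smooth in $\lambda$ with obvious derivatives, contributing $\int_{\R^3}|\nabla h|^2$ to the second derivative with $h=\tilde\phi-\phi$. For the first term the essential point is to differentiate the map $\psi\mapsto Q^{\ast\psi}$ through its dependence on the jacobian $a_\psi$; I would use the key identity, proved in the Appendix, that $Q=Q^{\ast\phi_Q}$ together with the layer-cake/change of variables representation
$$\int_{\R^6}\bigl(e_\psi(x,v)\bigr)Q^{\ast\psi}\,dx\,dv=\int_0^{+\infty}G(a_\psi(s),s)\,ds$$
or the equivalent one-dimensional reduction in the energy variable. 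The first variation produces a term in which the contribution coming from differentiating $Q^{\ast\psi}$ cancels, because $Q^\ast$ is transported along level sets of the energy and the correction is of mean zero against $\partial_e(\,\cdot\,)$ — this is exactly the mechanism that makes $\phi_Q$ a critical point of $J$ when $\phi_Q$ solves the stationary equation. Concretely I expect
$$\frac{d}{d\lambda}J(\phi_Q+\lambda h)\Big|_{\lambda=0}=\int_{\R^3}\nabla\phi_Q\cdot\nabla h+\int_{\R^6}F(e_{\phi_Q})\,h\,dx\,dv - \langle\text{projection correction}\rangle,$$
and after using $\Delta\phi_Q=\rho_Q$ and the definition of $e_{\phi_Q}$ this reduces to the Euler--Lagrange identity, so $DJ(\phi_Q)=0$; then a second differentiation yields \eqref{D2J}, where the projector $\Pi$ in \eqref{projector} arises precisely as the $L^2$-orthogonal projection (with weight given by the $v$-integral of $|F'(e)|$ along energy shells) onto functions constant on the energy surfaces, i.e.\ it encodes the incompressibility constraint inherited from equimeasurability.

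The second-order remainder estimate \eqref{taylor-exp} for radial $\phi$ is obtained by writing
$$J(\phi)-J(\phi_Q)-\tfrac12 D^2J(\phi_Q)(\phi-\phi_Q,\phi-\phi_Q)=\int_0^1(1-\lambda)\bigl(D^2J(\phi_Q+\lambda(\phi-\phi_Q))-D^2J(\phi_Q)\bigr)(\phi-\phi_Q,\phi-\phi_Q)\,d\lambda,$$
and showing that $\lambda\mapsto D^2J(\phi_\lambda)(h,h)$ is continuous at $\lambda=0$ uniformly in $h$ with $\|\nabla h\|_{L^2}\le 1$, so that the right-hand side is $\eps(\|\nabla\phi-\nabla\phi_Q\|_{L^2})$ with $\eps(\delta)=o(\delta^2)$. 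This continuity requires controlling how $a_{\phi_\lambda}$, the weight $|F'(e_{\phi_\lambda})|$, and the projector $\Pi_{\phi_\lambda}$ vary with $\lambda$; one uses that $\phi_\lambda\to\phi_Q$ in $\Phi_q$, that $\phi_Q$ is continuous and compactly supported (so $e_{\phi_Q}$ is well behaved and $F'$ is evaluated away from the bad endpoint on the bulk of the support), and the Sobolev embedding $\dot H^1(\R^3)\hookrightarrow L^6$ to absorb the $h^2$ terms. The restriction $6<q<12$ enters here to make all the $L^q$–$L^{6}$–$L^2$ interpolations in the $x$-integrals converge.

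The main obstacle — and the point where this proof genuinely departs from the classical Vlasov--Poisson argument of \cite{LMR-inv} — is the lack of an a priori bound on the jacobian $a_\phi$, caused both by the non-boundedness of potentials in $\Phi_q$ and by the homogeneity breaking of the relativistic kinetic energy $\sqrt{1+|v|^2}-1$ (which behaves like $\tfrac12|v|^2$ for small $v$ but like $|v|$ for large $v$, so the usual scaling identities are unavailable). I would handle this by establishing, in the Appendix and then invoking here, quantitative two-sided estimates on $a_\phi(e)$ and on its derivative in terms of $m(\phi)$ and $\|\phi\|_{\Phi_q}$ — in particular local Lipschitz dependence $\lambda\mapsto a_{\phi_\lambda}$ on compact energy intervals — together with the observation that on the support of $Q$ (where $F'$ is nonzero) the relevant energies stay in a fixed compact interval $(-\infty,e_Q)$ bounded away from where the weight degenerates. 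Once these jacobian estimates are in hand, the differentiation under the integral sign and the dominated-convergence arguments needed for both the formula \eqref{D2J} and the remainder bound \eqref{taylor-exp} go through as in the classical case.
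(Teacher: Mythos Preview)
Your overall strategy matches the paper's: split $J = J_0 + \tfrac{1}{2}\|\nabla\cdot\|_{L^2}^2$, reduce $J_0$ to a one-dimensional integral in the energy variable via the change of variable $e = \sqrt{1+|v|^2}-1+\phi(x)$, differentiate in $\lambda$, evaluate at $\phi_Q$ using $\Delta\phi_Q = \rho_Q$, and control the remainder via the integral form of Taylor's theorem. Two points deserve comment.

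First, a minor correction: there is no ``projection correction'' in the \emph{first} derivative. The paper integrates by parts at the outset to obtain $J_0(\phi) = -\int_{-\infty}^0 G(a_\phi(e))\,de$ with $G(s) = \int_0^s Q^\ast$, so that $\partial_\lambda J_0(\phi_\lambda) = -\int Q^\ast(a_{\phi_\lambda}(e))\,\partial_\lambda a_{\phi_\lambda}(e)\,de$. After undoing the change of variable this is simply $\int_{\R^6} Q\,h\,dx\,dv$ at $\phi_Q$, and $DJ(\phi_Q)(h) = \int \nabla\phi_Q\cdot\nabla h + \int Q\, h = 0$ by the Poisson equation, with no extra term. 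The projector $\Pi$ appears only at second order, through $\partial_\lambda a_{\phi_\lambda}^{-1}(s)$.

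Second, and more substantively, your plan for the remainder \eqref{taylor-exp} differs from the paper's and has a gap as stated. You propose direct quantitative Lipschitz estimates on $a_{\phi_\lambda}$; the paper instead argues by contradiction and compactness. The point is that one needs
\[
\sup_{\lambda\in[0,1]}\ \sup_{\|\nabla \hat h\|_{L^2}=1}\ \bigl|(D^2J_0(\phi_\lambda)-D^2J_0(\phi_Q))(\hat h,\hat h)\bigr|\ \longrightarrow\ 0\quad\text{as}\quad \|\nabla\phi-\nabla\phi_Q\|_{L^2}\to 0,
\]
and the uniformity in $\hat h$ is the delicate part. The paper exploits radiality through the pointwise Strauss-type bound $|\psi(r)|\le (4\pi r)^{-1/2}\|\nabla\psi\|_{L^2}$, which confines all $x$-integrands to a fixed bounded set $\Omega\subset\R^3$ independently of $n$. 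Then, assuming the double supremum does not vanish, one extracts sequences $\phi_n\to\phi_Q$ in $\dot H^1$ and $\hat h_n$ with $\|\nabla\hat h_n\|_{L^2}=1$ violating the bound, uses the \emph{compact} embedding $\dot H^1\hookrightarrow L^p_{\mathrm{loc}}$ for $p<6$ to pass to a limit $\hat h$, and shows by dominated convergence that both $D^2J_0(\phi_n)(\hat h_n,\hat h_n)$ and $D^2J_0(\phi_Q)(\hat h_n,\hat h_n)$ converge to $D^2J_0(\phi_Q)(\hat h,\hat h)$ --- a contradiction. Your direct-estimate route would require bounding the difference of kernels $K(a_{\phi_\lambda}^{-1}(s)-\phi_\lambda)-K(a_{\phi_Q}^{-1}(s)-\phi_Q)$ in an operator sense on $\dot H^1_{\mathrm{rad}}$; near $r=0$, where the radial pointwise bound degenerates and $\phi_\lambda$ need not converge uniformly, this is not controllable from $\dot H^1$ convergence alone. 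The compactness argument sidesteps this entirely, and you should adopt it.
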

Remark that the function $\Pi h$ can be seen as the projection of $h$ on the functions of the microscopic energy $e_{\phi_Q}(x,v)$. To prove the lemma \ref{taylor-J} we will first prove that $J$ is two times differentiable on $\Phi_q$, then we will evaluate its derivatives on $\phi_Q$ and finally we will control the rest of the expansion for radially symmetric potentials.
\begin{proof} Let $J_0$ be a functionnal on $\Phi_q$, defined by
\be  \ J_0(\phi) = \int_{\R^6} \left(\sqrt{|v|^2+1} -1 +\phi(x) \right) Q^{\ast \phi}dx dv=\int_{\R^6} e_\phi(x,v) Q^{\ast \phi}dx dv. \label{def-J0}\ee
Then $J$ is given by
\be J(\phi)=J_0(\phi) +\frac{1}{2} \left\|  \nabla \phi \right\|_{L^2}^2. \label{J-J0}  \ee
To differentiate $J$, we just have to differentiate $J_0$. Let $\phi, \tilde{\phi} \in \Phi_q$ and $h= \tilde{\phi} - \phi$. We study then the function $\lambda \mapsto J_0(\phi + \lambda h)$ on $[0,1]$.

 \ni
{\em First derivative of $J_0$.}
From the change of variable \eqref{chg-var}, we get
$$J_0(\phi)=\int_{\inf \phi}^0 e Q^\ast(a_\phi(e))a_\phi'(e) de =\int_{\inf \phi}^0 e\left(G \circ a_\phi \right)'(e)de, $$
where $G$ is the $\mathcal{C}^1$ bounded function, with bounded derivative, defined by
\be G(s)=\int_0^s Q^\ast(\sigma)d\sigma.\ee 
Moreover, from the property \eqref{conv-aa-1} in lemma \ref{jac-deriv}, the jacobian $a_\phi (e)$ converges to $0$ as $e\rightarrow \inf \phi$. Thus we have $\left[ e G \circ a_\phi (e) \right]_{\inf \phi}^0=0$ and an integration by parts gives
\be J_0(\phi)=-\int_{-\infty}^0 G \circ a_\phi(e) de. \label{J0-exp}\ee

\bs
\ni
{\em Step 1 : first derivative of $J_0$.}

To differentiate $J_0$ given by \eqref{J0-exp}, we shall use the Lebesgue's derivation theorem. From formula \eqref{formule-deriv-jac} in the Appendix, we have for all $e<0$
\be  \frac{\partial}{\partial \lambda} G(a_{\phi+\lambda h}(e))=Q^\ast (a_{\phi+\lambda h}(e))\frac{\partial}{\partial \lambda} a_{\phi+\lambda h}(e) \ee
with
$$ \frac{\partial}{\partial \lambda} a_{\phi+\lambda h}(e)=-4\pi \int_{\R^3} K\left( e-\phi(x)-\lambda h(x) \right)h(x)dx,$$
and
\be K(\eta)=\left( \left(1+\eta \right)_+^2-1\right)_+^\frac{1}{2} (1+\eta).\ee
Note that the uniform inequality
$$a_{\phi+\lambda h}(e) \geq \frac{4\pi}{3} \int_{\R^3} \left( \left(1+e+\frac{\min\{m(\phi),m(\tilde{\phi})\}}{1+|x|}\right)_+^2-1\right)_+^\frac{3}{2} dx$$
and the compact support of $Q^\ast$ imply that there exists $e_0<0$ such that 
\be \forall e\geq e_0, \ \forall\lambda \in [0,1], \ \ Q^\ast (a_{\phi+\lambda h}(e))=0.\ee
 Moreover, we have for all $e<e_0$, for all $\lambda \in [0,1]$, 
$$\left| \frac{\partial}{\partial \lambda} a_{\phi+\lambda h}(e) \right|  \leq 4\pi \int_{\R^3} K\left( e_0-\phi(x)-\tilde{\phi}(x)\right) \left| h(x) \right|dx, $$
and $Q^\ast (a_{\phi+\lambda h}(e))\leq \|Q\|_\infty$. Finally we obtain, by noting $\phi_\lambda=\phi+\lambda h$
 \be \frac{\partial}{\partial \lambda}J_0(\phi_\lambda)= 4 \pi \int_{-\infty}^0 \int_{\R^3} Q^\ast (a_{\phi_\lambda}(e)) K\left( e-\phi(x)-\lambda h(x) \right) h(x) dx de. \label{deriv1}\ee

\bs
\ni
{\em Step 2 : second derivative of $J_0$.}

We keep the previous notations. An integration by parts with respect to $e$ yields
$$\frac{\partial}{\partial \lambda}J_0(\phi_\lambda )=-\frac{4\pi}{3} \int_{-\infty}^0 \int_{\R^3}  {Q^\ast}' (a_{\phi_\lambda}(e))a'_{\phi_\lambda}(e) \left( \left(1+e-\phi_\lambda(x)\right)_+^2-1\right)_+^\frac{3}{2} h(x) dx de.$$
We perform now the change of variable $s=a_{\phi_\lambda}(e)$, which gives
\be \frac{\partial}{\partial \lambda}J_0(\phi_\lambda )=-\frac{4\pi}{3} \int_0^{L_0} \int_{\R^3}  {Q^\ast}' (s)  \left( \left(1+a_{\phi_\lambda}^{-1}(s)-\phi_\lambda(x)\right)_+^2-1\right)_+^\frac{3}{2} h(x) dx ds,\label{autre-dJ0}\ee
where $L_0$ is the measure of the support of $Q$ and thus satisfies $Supp(Q^\ast)=[0,L_0)$. Define $$g(\lambda,x,s):= \left( \left(1+a_{\phi_\lambda}^{-1}(s)-\phi_\lambda(x)\right)_+^2-1\right)_+^\frac{3}{2}.$$ 
From the first step, for all $\lambda\in [0,1]$ and for all $s\in [0,L_0]$, we have $a_{\phi_\lambda}^{-1}(s)\leq e_0$ and thus the set
$$\{ x\in \RR^3, \, g(\lambda,x,s)\neq 0 \} \subset \{ x\in \RR^3, \, \phi_\lambda(x)\leq e_0\} \subset \{ x\in \RR^3, \, \phi(x)+\tilde{\phi}(x)\leq e_0\}$$
is uniformaly contained in the bounded set $\Omega=\{ x\in \RR^3, \, \phi(x)+\tilde{\phi}(x)\leq e_0\}.$
Moreover, by recalling the notations $h=\phi-\tilde{\phi}$ and 
\be K(\eta)=\left( \left(1+\eta \right)_+^2-1\right)_+^\frac{1}{2} (1+\eta),\label{def-KK}\ee
we have for $(\lambda,x,s)\in [0,1] \times \Omega\times [0,L_0] $
\be \frac{\partial g}{\partial \lambda}(\lambda,x,s)= 3 K\left( a_{\phi_\lambda}^{-1}(s)-\phi(x)-\lambda h(x) \right) \left(-h(x)+ \frac{\partial}{\partial \lambda}a_{\phi_\lambda}^{-1}(s) \right), \ee
where, from lemma \eqref{jac-deriv} in Appendix,
$$\frac{\partial}{\partial \lambda}a_{\phi_\lambda}^{-1}(s)=\frac{\dis  \int_{\Omega} K\left( a_{\phi_\lambda}^{-1}(s)-\phi(x)-\lambda h(x) \right) h(x) dx }{\dis \int_{\Omega} K\left( a_{\phi_\lambda}^{-1}(s)-\phi(x)-\lambda h(x) \right) dx     }.$$
To differentiate \eqref{autre-dJ0} with respect to $\lambda$, we first prove at fixed $s\in(0,L_0)$ 
\be \frac{\partial }{\partial \lambda} \int_{\Omega} g(\lambda,x,s) h(x) dx=\int_{\Omega} \frac{\partial g}{\partial \lambda}(\lambda,x,s) h(x) dx. \label{diff-1x}\ee
Let fixed $s\in(0,L_0)$. We begin by bounding $\frac{\partial}{\partial \lambda}a_{\phi_\lambda}^{-1}(s)$. For all $x\in\RR^3$
\be \phi(x) +\lambda h(x)\geq \phi(x)+\tilde{\phi}(x) \ \mbox{ and } \ a_{\phi_\lambda}^{-1}(s) \leq e_0,\label{borne11} \ee
which provides
\be 0 \leq \int_{\Omega} K\left( a_{\phi_\lambda}^{-1}(s)-\phi(x)-\lambda h(x) \right) h(x) dx \leq  \int_{\Omega} K\left( e_0-\phi(x)-\tilde{\phi}(x) \right) h(x) dx. \label{num11}\ee
Now denote $$\Omega_{\lambda}=\left\{ x\in\RR^3,\ \phi_\lambda(x)< a_{\phi_\lambda}^{-1}\left(\frac{s}{2}\right)\right\}.$$
Then the set $\Omega_{\lambda}$ is included in $\Omega$ and 
$$\mbox{ meas } (\Omega_\lambda)=a_{\phi_\lambda} \circ a_{\phi_\lambda}^{-1}\left(\frac{s}{2}\right)=\frac{s}{2}.$$
Moreover, for all $x\in\Omega_\lambda$, we have
$$a_{\phi_\lambda}^{-1}(s)-\phi_\lambda(x) \geq  a_{\phi_\lambda}^{-1}(s)-a_{\phi_\lambda}^{-1}\left(\frac{s}{2}\right),$$
which, combined with
$$ \forall \eta >0, \ K(\eta)=\left( \eta \left(2+\eta \right)\right)_+^\frac{1}{2} (1+\eta)\geq \sqrt{2\eta},$$
implies
\be \int_{\Omega} K\left( a_{\phi_\lambda}^{-1}(s)-\phi_\lambda(x)\right) dx \geq\frac{s}{\sqrt{2}} \left( a_{\phi_\lambda}^{-1}(s)-a_{\phi_\lambda}^{-1}\left(\frac{s}{2}\right)\right)^\frac{1}{2}. \label{denom11} \ee
We claim that, at fixed $s>0$, there exists a constant $C>0$ such that for all $\lambda\in[0,1]$  
\be  a_{\phi_\lambda}^{-1}(s)-a_{\phi_\lambda}^{-1}\left(\frac{s}{2}\right) \geq C.\label{diff11}\ee
Indeed, assume that the property \eqref{diff11} does not hold, then there exist $\lambda\in[0,1]$ and a sequence $(\lambda_n)$ such that, as $n\rightarrow +\infty$,
$$\lambda_n \rightarrow \lambda \ \mbox{ and } \  a_{\phi_{\lambda_n}}^{-1}(s)-a_{\phi_{\lambda_n}}^{-1}\left(\frac{s}{2}\right)\rightarrow 0.$$
From lemma \ref{jac-deriv} in Appendix, we conclude that 
$$a_{\phi_{\lambda}}^{-1}(s)-a_{\phi_{\lambda}}^{-1}\left(\frac{s}{2}\right)=0,$$
which is not possible since $s>0$ and $a_{\phi_{\lambda}}^{-1}$ is strictly increasing on $\RR_+^\ast$. Finally from the inequalities \eqref{num11} and \eqref{denom11}, for $s\in(0,L_0)$ there exists a constant $C_s>0$ such that for all $\lambda\in[0,1]$,
$$ \frac{\partial}{\partial \lambda}a_{\phi_\lambda}^{-1}(s) \leq C_s.$$
We can thus uniformly bound
$$ \left| \frac{\partial}{\partial \lambda}g \right| \leq  3  \left(1+e_0-\phi(x)-\tilde{\phi}(x)\right)_+^2 \left(-\phi(x)-\tilde{\phi}(x)+ C_s \right),$$
where we used \eqref{borne11} and the fact that $K(\eta)\leq (1+\eta)^2_+$. By noticing that the function $\phi+\tilde{\phi}$ belongs to $L_{loc}^3(\RR^3)$ (since q>3), Lebesgue derivation theorem provides \eqref{diff-1x}.

\bs
Now aim to integrate with respect to $s\in(0,L_0)$. We define for all $s\in(0,L_0)$ and for all $\lambda\in[0,1]$
$$  I(\lambda,s):=\frac{\partial}{\partial \lambda} \int_\Omega g (\lambda,x,s) h(x) dx=\int_\Omega \frac{\partial}{\partial \lambda}g (\lambda,x,s) h(x)dx,$$
which is continuous function of the variable $\lambda$. Remark that
$$  I(\lambda,s)= 3 \int_\Omega K \left( a_{\phi_\lambda}^{-1}(s)-\phi_\lambda(x)\right) \left(-h(x)+ \frac{\partial}{\partial \lambda}a_{\phi_\lambda}^{-1}(s) \right)h(x) dx, $$
where, from Cauchy-Schwarz inequality,
\be \begin{array} {rcl}
\dis \int_\Omega K \left( a_{\phi_\lambda}^{-1}(s)-\phi_\lambda(x)\right) \frac{\partial a_{\phi_\lambda}^{-1}}{\partial \lambda} h (x)dx & = & \dis \frac{ \left( \dis \int_{\R^3} K \left( a_{\phi_\lambda}^{-1}(s)-\phi_\lambda(x)\right) h(x) dx \right)^2 }{\dis \int_{\R^3} K \left( a_{\phi_\lambda}^{-1}(s)-\phi_\lambda(x)\right) dx     } \\
 & \leq &\dis  \int_{\R^3} K \left( a_{\phi_\lambda}^{-1}(s)-\phi_\lambda(x)\right) \left(h(x)\right)^2 dx.
\end{array} \label{CS}\ee
Thus, we have
$$\left|  I(\lambda,s) \right| \leq 3 \int_{\R^3} K \left( a_{\phi_\lambda}^{-1}(s)-\phi_\lambda(x)\right) \left(h(x)\right)^2 dx.$$
Moreover, since  $Q^\ast$ is decreasing from $\|Q\|_{L^\infty}$ to $0$, the function ${Q^\ast}'$ belongs to $L^1(0,L_0)$ and finally, from Lebesgue's derivation theorem, we get
\be \begin{array}{rl}
\dis \frac{\partial^2}{\partial \lambda^2}J_0(\phi_\lambda )= & \dis 4\pi \int_0^{L_0} \int_{\R^3}  {Q^\ast}' (s) K \left( a_{\phi_\lambda}^{-1}(s)-\phi_\lambda(x)\right) (h(x))^2 dx ds \\
& \dis - 4\pi \int_0^{L_0}  {Q^\ast}' (s) \frac{\dis \left( \int_{\R^3} K \left( a_{\phi_\lambda}^{-1}(s)-\phi_\lambda(x)\right) h(x) dx \right)^2}{\dis \int_{\R^3} K \left( a_{\phi_\lambda}^{-1}(s)-\phi_\lambda(x)\right) dx     }ds.
\end{array} \label{deriv2s}\ee
Using the change of variable $e=a_{\phi_\lambda}^{-1}(s)$,  we get

\be\begin{array}{rl}
\dis \frac{\partial^2}{\partial \lambda^2}J_0(\phi_\lambda )= & \dis 4\pi\int_{-\infty}^{0} \int_{\R^3}  {Q^\ast}' (a_{\phi_\lambda}(e))a'_{\phi_\lambda}(e)   K \left( e-\phi_\lambda(x)\right) (h(x))^2 dx de \\
 & \dis - 4\pi \int_{-\infty}^{0}  {Q^\ast}' (a_{\phi_\lambda}(e))a'_{\phi_\lambda}(e) \frac{\dis \left( \int_{\R^3} K \left( e-\phi_\lambda(x)\right) h(x) dx \right)^2}{\dis \int_{\R^3} K \left( e-\phi_\lambda(x)\right) dx     }de.
\end{array} \label{deriv2}\ee

\bs
\ni
{\em Step 3: derivatives of $J$ at $\phi_Q$:} 

Let $\phi \in \Phi_q$ and $h=\phi-\phi_Q$. Then, by \eqref{deriv1},
 $$ D J_0(\phi_Q)(h)= 4 \pi \int_{-\infty}^0 \int_{\R^3} Q^\ast (a_{\phi_Q}(e))K \left( e-\phi_\lambda(x)\right)  h(x) dx de $$
Note that, from lemma \ref{lem-rearrang}, the function $F$ defined by theorem \ref{thm} satisfies 
$$Q^\ast (a_{\phi_Q}(e)) =F(e).$$
Now perform the change of variable $u=\left( \left(1+e-\phi_Q(x)\right)_+^2-1\right)_+^\frac{1}{2} $ with respect to $e$. Then
$$\begin{array} {rcl}
\dis D J_0(\phi_Q)(h) & =  & \dis 4 \pi \int^{+\infty}_0 \int_{\R^3} F\left( \sqrt{1+|v|^2}-1+\phi_Q(x)\right) u^2 h(x) dx du\\
 & = & \dis \int_{\RR^6} Q(x,v)h(x) dx dv.
 \end{array} $$
Hence, from \eqref{J-J0}, 
$$ DJ(\phi_Q)(h)= D J_0(\phi_Q)(h) + \int_{\RR^3} \nabla \phi_Q \cdot \nabla h dx=0, $$
where we used the Poisson equation satisfied by $\phi_Q$.

We now give the explicit expression of the second derivative of $J_0$ at $\phi_Q$. Remark first that $F'(e)={Q^\ast}' (a_{\phi_Q}(e))a'_{\phi_Q}(e)$ and thus from \eqref{deriv2}, 
$$\begin{array}{rl}
\dis D^2 J_0(\phi_Q)(h,h)= & \dis 4\pi\int_{-\infty}^{0} \int_{\R^3}  F'(e)    K \left( e-\phi_\lambda(x)\right) (h(x))^2 dx de \\
& \dis  - 4\pi \int_{-\infty}^{0} F'(e) \frac{\dis  \left( \int_{\R^3}  K \left( e-\phi_\lambda(x)\right) h(x) dx \right)^2}{\dis \int_{\R^3}  K \left( e-\phi_\lambda(x)\right) dx     }de.
\end{array} $$
We apply the change of variable $u=\left( \left(1+e-\phi_Q(x)\right)_+^2-1\right)_+^\frac{1}{2} $ with respect to $e$ to get
$$ D^2 J_0(\phi_Q)(h,h)=\int_{\RR^6} F'\left(e\right)\left(h(x)\right)^2 dx dv -\int_{\RR^6} F'\left(e\right) \Pi h\left(e\right)h(x) dx dv,$$
where $e=e(x,v)=\sqrt{1+|v|^2}-1+\phi_Q(x)$ and $\Pi h$ is the projector on the space of functions depending only on $e(x,v)$, defined by \eqref{projector}. Hence we have 
$$ D^2 J_0(\phi_Q)(h,h)=\int_{\RR^6} F'\left(e(x,v)\right)\left(h(x)-\Pi h\left(e(x,v)\right)\right)^2 dx dv, $$
and the decomposition \eqref{J-J0} provides 
\be D^2 J(\phi_Q)(h,h)=D^2 J_0(\phi_Q)(h,h)+\int_{\RR^3} \left| \nabla h \right|^2 dx,\ee
which concludes the proof of \eqref{D2J}.

\bs
\ni
{\em Step 4: proof of the Taylor expansion \eqref{taylor-exp}:} 

Let $\phi \in \Phi_q$ radially symmetric and $h=\phi-\phi_Q$. We note for $\lambda\in [0,1]$, $\phi_\lambda:=\phi_Q+\lambda h$. Then, using $DJ (\phi_Q)(h)=0$, we have

\ni
$ \dis  J(\phi_Q+h)-J(\phi_Q)= \frac{1}{2} D^2J(\phi_Q)(h,h) $
\be +\|\nabla h\|_{L^2}^2 \int_0^1 (1-\lambda) \left(D^2J_0(\phi_\lambda)-D^2 J_0(\phi_Q) \right) \left( \frac{h}{\|\nabla h\|_{L^2}}, \frac{h}{\|\nabla h\|_{L^2}} \right) d\lambda.\ee
It is sufficient to prove that 
\be \sup_{\lambda \in [0,1]} \sup_{ \|\nabla \hat{h}\|_{L^2}=1} \left| \left(D^2J_0(\phi_\lambda)-D^2 J_0(\phi_Q) \right) \left( \hat{h},\hat{h} \right) \right| \rightarrow 0\label{supsup} \ee
as $\| \na\phi-\na\phi_Q\|_{L^2}\rightarrow 0$ to obtain the Taylor expansion \eqref{taylor-exp}. Note that the functions $\hat{h}$ in \eqref{supsup} are taken to be radially symmetric. In order to prove \eqref{supsup} we argue by contradiction. Let $\eps>0$, $\psi_n\in\Phi_q$, $\hat{h}_n\in\Phi_q$ and $\lambda_n \in[0,1]$ such that
\be \| \na \psi_n-\na \phi_Q \|_{L^2(\RR^3)} \leq \frac{1}{n}, \, \, \ \ \ \| \na \hat{h}_n \|_{L^2(\RR^3)}=1, \ee
and 
\be \left| \left(D^2J_0(\phi_n)-D^2 J_0(\phi_Q) \right) \left( \hat{h}_n,\hat{h}_n \right) \right|>\eps, \label{faux}\ee
where $\phi_n=(1-\lambda_n)\phi_Q+\lambda_n \psi_n$. The sequence $\phi_n$ satisfies
\be \| \na \phi_n-\na \phi_Q \|_{L^2(\RR^3)} \leq \frac{1}{n}.\label{pourL6}\ee
We recall from \eqref{deriv2s} that 
\be D^2J_0(\phi_n )\left( \hat{h}_n,\hat{h}_n \right) =4\pi \int_0^{L_0}{Q^\ast}' (s)\gamma_n(s)  ds \label{D2-hn},\ee
where 
\be \gamma_n(s)=\int_{\R^3}    g_n(x,s)(\hat{h}_n(x))^2 dx- \frac{\dis  \left( \int_{\R^3} g_n(x,s)\hat{h}_n(x) dx \right)^2}{ \dis \int_{\R^3} g_n(x,s)dx     } \label{D-fns},\ee
and
$$g_n(x,s)= \left( \left(1+a_{\phi_n}^{-1}(s)-\phi_n(x)\right)_+^2-1\right)_+^\frac{1}{2} (1+a_{\phi_n}^{-1}(s)-\phi_n(x)). $$
Notice first that the convergence \eqref{pourL6} implies the convergence of $\phi_n$ to $\phi_Q$ in $L^6(\RR^3)$. Thus, from \eqref{conv-aa-1}, we have 
\be a_{\phi_n}^{-1}(s)\rightarrow a_{\phi_Q}^{-1}(s). \label{cavient}\ee
Moreover, we have
$$\left| \phi_n(r) \right| \leq \int_r^{+\infty} \left| \phi_n'(r)\right| dr \leq \| r \phi_n'(r)\|_{L^2(\RR_+)} \left( \int_r^{+\infty} \frac{1}{r^2} dr\right)^\frac{1}{2},$$
where we used the convergence $\phi_n(r)$ to $0$ as $r\rightarrow +\infty$  from the definition of $\Phi_q$. It gives
 \be \forall r\in \RR_+^\ast, \ \ |\phi_n(r) | \leq \frac{\| \na \phi_n \|_{L^2(\RR^3)}}{\sqrt{4\pi r}}\leq\frac{C}{ r^\frac{1}{2}} .\label{boom}\ee
Thus the set of integration in $x$ in the integral \eqref{D-fns} can be restricted to a bounded domain $\Omega$ uniformly with respect to $s\in[0,L_0]$. Indeed, from the increase of $a_{\phi_n}^{-1}$, for all $s\in[0,L_0]$,
\be D_n(s):=\{x\in \RR^3\ : \ \phi_n(x)<a_{\phi_n}^{-1}(s)\} \subset \{x\in \RR^3\ : \ \phi_n(x)<a_{\phi_n}^{-1}(L_0)\}, \ee
and, since $\phi_n \in\Phi_q$ is nonpositive,
\be D_n(s) \subset \left\{x\in \RR^3\ : \ |x|\leq \frac{C^2}{ e_0^2}\right\}=:\Omega\ee
where $e_0=\sup_{n\in\NN} a_{\phi_n}^{-1}(L_0)<0$.\\
Now, from the local compactness of the Sobolev embedding $\dot{H}^1\hookrightarrow L^p_{loc}$ for $1\leq p<6$,  there exists $\hat{h} \in \dot{H}^1_{rad}$ such that, up to a subsequence,
$$ \phi_n \rightarrow \phi_Q \textrm{ and } \hat{h}_n \rightarrow \hat{h} \mbox{ in } L^p(\Omega) \mbox{ as } n\rightarrow +\infty.$$
At fixed $s\in[0,L_0]$, these convergences combined with the convergence \eqref{cavient} provide, on the one hand, the convergence
for all $i\in\{0,1,2\}$, 
$$\hat{h}_n^i \rightarrow \hat{h}^i \mbox{ in } L^2(\Omega_s),$$
and, on the other hand, the convergence of 
$$x \mapsto g_n(x,s)^2=2\eta+5\eta^2+4\eta^3+\eta^4\ \mbox{ with } \eta=\left( a_{\phi_n}^{-1}(s)-\phi_n(x)\right)_+$$
in $L_x^1(\Omega)$ to $g(\cdot,s)^2$ where
$$g(x,s)=\left( \left(1+a_{\phi_Q}^{-1}(s)-\phi_Q(x)\right)_+^2-1\right)_+^\frac{1}{2} (1+a_{\phi_Q}^{-1}(s)-\phi_Q(x)).$$
Thus, for all $i\in\{0,1,2\}$, as $n\rightarrow +\infty$
$$ g_n(x,s)\hat{h}_n^i \rightarrow g(x,s) \hat{h}^i \mbox{ in } L^1_x(\RR^3).$$
The convergence of $\gamma_n(s)$, at fixed $s$, follows. 
\ni
Now, by Cauchy-Schwarz, we have
$$ \left|  \frac{ \dis \left( \int_{\R^3} g_n(x,s)\hat{h}_n(x) dx \right)^2}{ \dis \int_{\R^3} g_n(x,s)dx     } \right| \leq  \int_{\R^3}  g_n(x,s)(\hat{h}_n(x))^2 dx, $$
which provides
$$\begin{array}{rcccl} 
0 &\leq & \gamma_n(s)& \leq & \dis \int_{\R^3}  g_n(x,s)(\hat{h}_n(x))^2 dx\\
 & & & \leq & \dis  \int_{\R^3} g_n(x,L_0)(\hat{h}_n(x))^2 dx.
 \end{array} $$
Thus $\gamma_n$ is uniformly bounded on $[0,L_0]$ and from standard dominated convergence theorem,
\be D^2J_0(\phi_n )\left( \hat{h}_n,\hat{h}_n \right) \rightarrow D^2J_0(\phi_Q )\left( \hat{h},\hat{h} \right) \mbox{ as } n\rightarrow +\infty,\ee
and similarly 
\be D^2J_0(\phi_Q )\left( \hat{h}_n,\hat{h}_n \right) \rightarrow D^2J_0(\phi_Q )\left( \hat{h},\hat{h} \right) \mbox{ as } n\rightarrow +\infty.\ee
These convergences contradict \eqref{faux}, which proves the Taylor expansion \eqref{taylor-exp} and concludes the proof of Lemma \ref{taylor-J}.
\end{proof}

\subsection{Proof of Proposition \ref{prop-coercivity}.} \label{poppc}

We use now the Taylor expansion stated in lemma \ref{taylor-J} to obtain the proposition \ref{prop-coercivity}. In a first step we prove the local coercivity of the functionnal $J_0$ near $\phi_Q$ for radially symmetric potentials by using a Hardy type control, obtained in a second step.  In a third step, we finally pass from radially symmetric modes to general modes by using a compactness argument \cite{BG} which concludes the proof of the proposition \ref{prop-coercivity}. 

\bs
\ni
{\it Step 1: coercivity of the quadratic form $D^2J(\phi_Q )$}

In this step, our aim is to prove that there exists an universal constant $C_0>0$ such that 
\be \forall h \in \dot{H}^1_{rad} \ \ \ \ D^2 J(\phi_Q)(h,h)\geq C_0 \| \na h \|_{L^2}^2.\label{coercivity1}\ee
where the space $$\dot{H}^1_{rad}=\left\{ h \in L^2_{loc}(\RR^3), \mbox{ radially symmetric, s.t. } \na\phi \in L^2(\RR^3) \mbox{ and } \lim_{|x|\rightarrow +\infty}\phi(x)=0\right\} .$$
is a Banch space. We consider the linear operator generated by the Hessian $D^2(\phi_Q)$:
$$ \mathcal{L}h=-\Delta h- \int_{\RR^3} |F'(e)| (h-\Pi h ) dv.$$
Remark that the compactness of the quadratic form $D^2J(\phi_Q )$ on $\dot{H}^1_{rad}$ is given by the previous proof of the Taylor expansion. From the Fredholm alternative, we have only to prove the strict posivity
\be \forall h \in \dot{H}^1_{rad}, \ h\neq 0, \ (\mathcal{L}h,h) > 0.\label{st-positive-on} \ee
to obtain the coercivity \eqref{coercivity1}.
From the Taylor expansion \eqref{D2J}, this inequality can be seen as a Poincar\'e inequality with an explicit constant, and we shall adapt the H\"ormander's proof \cite{H1,H2} to obtain it.

Let us introduce the following operator $T$ defined by
$$Tf (e,r)= \frac{\partial_r f}{r^2 \left( (1+e-\phi_Q(r))^2-1\right)^\frac{1}{2} (1+e-\phi_Q(r))} =  \frac{\partial_r f}{r^2 u \sqrt{1+u^2}},$$
where 
\be u(r,e)=\left( (1+e-\phi_Q(r))_+^2-1\right)_+^\frac{1}{2}.\label{ure}\ee
Recalling that $\phi_Q(r)$ is strictly increasing and that $Supp(F)=[0,e_Q)$, we shall denote the space
$$\mathcal{U}=\{(r,e), \, u>0 \}= \{ (r,e), \, e\in(\phi_Q(0),0), \, r\in (0,r(e)) \} \ \mbox{ with } r(e)=\phi_Q^{-1}(e),$$
Then we define on $\tilde{\mathcal{U} }=\mathcal{U} \cap (0,r(e_Q))\times (\phi_Q(0),e_Q)$, for a given $h \in \dot{H}^1_{rad}$, the function
\be f(r,e)=\int_0^r (h(\tau)-\Pi h (e) )  \left( (1+e-\phi_Q(\tau))^2-1\right)_+^\frac{1}{2} (1+e-\phi_Q(\tau))\tau^2 d\tau.\label{def-fff}\ee
We can differentiate $f$ and get, in particular,
\be Tf=h-\Pi h.\label{Tf-h}\ee
Now let $\eps>0$ and study the behavior of $f(e,r)$ for $r\rightarrow 0$ and $r\rightarrow r(e)$ when $e$ belongs to $(\phi_Q(0)+\eps,-\eps)$. Notice first that, from \eqref{boom}, for all $\tau>0$
 \be \tau^\frac{1}{2}| h(\tau)|\leq \|\na h \|_{L^2},\label{cont-rh}\ee
 and
 \be  \forall e\in(\phi_Q(0)+\eps,-\eps), \ \ \left| \Pi h(e) \right| \leq C_\eps. \label{cont-pih}\ee
These inequalities combined with the continuity of $\phi_Q$ imply the existence of $C_\eps>0$ such that, for all $e\in(\phi_Q(0)+\eps,-\eps)$ and for all $r\in(0,r(e))$,
\be  \left| f(e,r)\right| \leq C_e r^\frac{5}{2}.  \label{bord2}\ee
Moreover the function $f$ satisfies
\be f(e,r(e))=\int_0^{+\infty}(h(\tau)-\Pi h (e) )u(r,e) (1+e-\phi_Q(\tau))\tau^2 d\tau=0, \label{bord11}\ee
where we used $$\Pi h(e) =\frac{\dis \int_0^{+\infty}u(\tau,e) (1+e-\phi_Q(\tau))h(\tau) \tau^2 d\tau}{\dis \int_0^{+\infty}u(\tau,e) (1+e-\phi_Q(\tau)) \tau^2 d\tau}.$$
Hence, using \eqref{cont-rh},\eqref{cont-pih} and the increase of $\phi_Q$, we get for $e\in(\phi_Q(0)+\eps,-\eps)$ and $r\in(0,r(e))$
$$\begin{array}{rcl}
\dis \left| f(r,e)\right| & = & \dis \left| \int_r^{r(e)} (h(\tau)-\Pi h (e) )  \left( (1+e-\phi_Q(\tau))^2-1\right)_+^\frac{1}{2} (1+e-\phi_Q(\tau))\tau^2 d\tau\right|\\
 & \leq & \dis C_\eps \left( (1+e-\phi_Q(r))^2-1\right)_+^\frac{1}{2} (1+e-\phi_Q(r))  \int_r^{r(e)} \tau^\frac{3}{2} d\tau,
 \end{array}$$
in which
$$\left( (1+e-\phi_Q(r))^2-1\right)_+^\frac{1}{2} (1+e-\phi_Q(r)) \leq (e-\phi_Q(r))_+^\frac{1}{2} (2-\phi_Q(0))^\frac{1}{2}(1-\phi_Q(0)),$$
and
$$\int_r^{r(e)} \tau^\frac{3}{2} d\tau \leq C (r(e)-r) \lesssim (e-\phi_Q(r)).$$
since $\phi'_Q(r)\geq \inf \{\phi'_Q(\tau), \tau\in[r(-\eps),r(\phi_Q(0)+\eps)]\}>0$.
Finally there exists $C_\eps>0$ such that for all $e\in(\phi_Q(0)+\eps,-\eps)$ and for all $r\in(0,r(e))$
\be \left| f(e,r) \right|\leq C_\eps (e-\phi_Q(r))^\frac{3}{2}.\label{bord1} \ee 
Now denote $$I(h)=\int_{\RR^6}  |F'(e)| (h-\Pi h)^2 dxdv.$$
First, passing to the spherical coordinates and performing the change of variable $e=\sqrt{1+|v|^2}-1+\phi_Q(r)$, we get from \eqref{Tf-h}
$$\begin{array} {rcl} 
I(h) & = & \dis 16\pi^2\int  |F'(e)| (h-\Pi h)^2 r^2 u(e,r) \sqrt{1+u(e,r)^2} dr de\\
     &  =   & \dis  16\pi^2\int_{\phi_Q(0)}^0  |F'(e)| de \int_0^{r(e)} (h(r)-\Pi h(e)) \partial_r f dr,
\end{array}$$
where $u(r,e)$ is defined by \eqref{ure}. Now, from \eqref{bord1} and \eqref{bord2}, we have
$$(h(r(e))-\Pi h(e))f(r(e),e)=0 \ \mbox{ and } \lim_{r\rightarrow 0} (h(r)-\Pi h(e)) f(r,e)=0,$$
from which an integration by parts gives
$$I(h)\leq -16\pi^2\int_{\tilde{\mathcal{U}} } |F'(e)|  f \partial_r h de dr.$$
This inequality, combined with the identity
$$\rho_Q(r)=\frac{4\pi}{3} \int |F'(e)| u(e,r)^3 de,$$
leads to:
$$\begin{array} {rcl}
I(h) & \leq & \dis (4\pi)^\frac{3}{2} \| \na h \|_{L^2(\RR^3)} \left(   \int_0^{r(e_Q)}  \frac{dr}{r^2} \left( \int_{\phi_Q(r)}^{e_Q}  |F'(e)|  f de  \right)^2   \right)^\frac{1}{2} \\
   & \leq &  \dis (4\pi)^\frac{3}{2} \| \na h \|_{L^2(\RR^3)} \left( \frac{3}{4\pi} \int_0^{r(e_Q)} \frac{\rho_Q(r)}{r^2} dr  \int_{\phi_Q(r)}^{e_Q}   |F'(e)| \frac{f^2}{u(e,r)^3} de               \right)^\frac{1}{2},
\end{array}$$
where we used Cauchy-Schwarz inequalities. Performing the change of variable $u=u(e,r)$ with respect to $e$, we finally obtain
\be I(h)\leq  \| \na h \|_{L^2(\RR^3)} \left( 3 \int   \rho_Q(r)|F'(e)| \frac{f^2}{r^4 u^4 \sqrt{1+u^2}} dx dv   \right)^\frac{1}{2} .\label{max-Ih} \ee
Now we claim the following Hardy type control:
\be I(h) \geq 3 \int   \left( \rho_Q(r) + \frac{\phi'_Q(r)}{r(1+u^2)} \right) |F'(e)| \frac{f^2}{r^4 u^4 \sqrt{1+u^2}} dx dv. \label{Hardy}\ee
Assume \eqref{Hardy}, then \eqref{max-Ih} yields
$$I(h)+ 3 \int    \frac{\phi'_Q(r)}{r(1+u^2)}  |F'(e)| \frac{f^2}{r^4 u^4 \sqrt{1+u^2}} dx dv \leq  \| \na h \|_{L^2(\RR^3)}^2.$$
Thus, letting $\eps \rightarrow 0$ yields $(\mathcal{L}h,h)\geq 0$. Moreover, if $(\mathcal{L}h,h)=0$, then $f=0$ on $\tilde{\mathcal{U}}$, which implies $h(r)=\Pi h(e)$ on $\tilde{\mathcal{U}}$, also $0=(\mathcal{L}h,h)=\| \na h \|_{L^2(\RR^3)}^2$ and finally $h=0$. This concludes the proof of \eqref{st-positive-on}.

\bs
\ni
{\it Step 2: Hardy type control.}

Let us prove now the Hardy type control \eqref{Hardy}. Let $g$ be a given smooth function in $\tilde{\mathcal{U}}$ and $q$ such that $f=qg$. After easy computations, we get
\be (Tf)^2=g^2(Tq)^2+T(q^2gTg)-\frac{T^2g}{g} f^2\geq T(q^2gTg)-\frac{T^2g}{g} f^2. \label{compute}\ee
We take $g(e,r)=r^3 u(e,r)^3$. Then, remarking that
$$\frac{\partial u}{\partial r} = -\phi_Q'(r) \frac{\sqrt{1+u^2}}{u}, $$
we have
$$Tg= \frac{\partial_r g}{r^2 u \sqrt{1+u^2}}=3\frac{u^2}{\sqrt{1+u^2}}-3r \phi_Q'(r),$$
and therefore
$$T^2 g=\frac{-3}{ru\sqrt{1+u^2}}\left( \phi_Q''(r)+\frac{\phi_Q'(r)}{r} \left(2+\frac{1}{1+u^2}\right)\right).$$
Since $\Delta \phi_Q=\rho_Q$, it implies
\be \frac{T^2 g}{g}=- \frac{3}{r^4 u^4 \sqrt{1+u^2}}\left( \rho_Q(r)+\phi_Q'(r) \frac{1}{r(1+u^2)}\right). \ee
Injecting this into \eqref{compute} and integrating on $\tilde{\mathcal{U}}$ yield:
\be\begin{array}{rcl} I(h) & \geq &  \dis 3 \int  \left( \rho_Q(r) + \frac{\phi'_Q(r)}{r(1+u^2)} \right) |F'(e)| \frac{f^2}{r^4 u^4 \sqrt{1+u^2}} dx dv \\
   &  & \dis  +  \int  |F'(e)| T\left( f^2 \frac{Tg}{g} \right) dx dv.
\end{array},\label{HHH}\ee
where we used $q^2gTg=f^2 \frac{Tg}{g}$. Now
$$\int  |F'(e)| T\left( f^2 \frac{Tg}{g} \right) dx dv=16\pi^2 \int_{\phi_Q(0)}^{e_Q}  |F'(e)| de \int_0^{r(e)} \partial_r \left( f^2 \frac{Tg}{g} \right)dr.$$
From \eqref{bord1} and \eqref{bord2}, we can perform an integration by parts and we get
$$\int  |F'(e)| T\left( f^2 \frac{Tg}{g} \right) dx dv=0.$$
We obtain thus the Hardy type control \eqref{Hardy} which concludes the proof of the strict positivity of the Hessian $D^2(\phi_Q)$ on radial modes \eqref{st-positive-on} and therefore the coercivity \eqref{coercivity1}.\\

\ni
{\it Step 3: end of the proof:} Let us first prove the proposition \ref{prop-coercivity} for radially symmetric potential. Let $\phi_n $ a sequence of $\Phi_q$, radially symmetric, such that 
\be \forall n\in \NN,\     \left\|  \nabla \phi_n -\nabla \phi_Q \right\|_{L^2} \leq \delta_1,\ \ \limsup_{n\rightarrow+\infty}J(\phi_n)\leq J(\phi_Q),  \label{cond-coer-rad} \ee
where $\delta_1$ will be defined later. Then we use the Taylor expansion \eqref{taylor-exp} and the coercivity \eqref{coercivity1}, which holds only for radially symmetric potential:
$$J(\phi_n)-J(\phi_Q) \geq \frac{C_0}{2}  \left\|  \nabla \phi_n -\nabla \phi_Q \right\|_{L^2}^2+\eps( \left\|  \nabla \phi_n -\nabla \phi_Q \right\|_{L^2}).$$
Now, since $\eps(\delta)=\circ(\delta^2)$ as $\delta\rightarrow 0$, we can choose $\delta_1$ such that 
$$\forall \delta \in [0,\delta_1], \ \ \eps(\delta) \leq   \frac{C_0}{4} \delta^2.$$
Thus 
$$J(\phi_n)-J(\phi_Q) \geq \frac{C_0}{4}  \left\|  \nabla \phi_n -\nabla \phi_Q \right\|_{L^2}^2,$$
which finally provides, from \eqref{cond-coer-rad},
\be \lim J(\phi_n)=J(\phi_Q) \mbox{ and } \left\|  \nabla \phi_n -\nabla \phi_Q \right\|_{L^2} \rightarrow 0 \mbox{ as } n\rightarrow +\infty \label{conv-rad}.\ee
Let pass now at the general case. We consider a sequence $\phi_n $ of $\Phi_q$. We define then $\phi_n^\ast=-(-\phi_n)^\ast $, the opposite of the Schwarz rearrangement of $(-\phi_n)$. We introduce, for all $n\in \NN$, the potential $\phi_n^\#$ given by 
$$\forall x\in\RR^3, \ \phi_n^\#(x)=\phi_n^\ast\left( \frac{4\pi}{3}|x|^3\right).$$
It is the Schwarz rearrangement of $\phi_n$, defined as a function of $\RR^3$. Then we claim that we have
\be J(\phi_n^\#)-J(\phi_n)=\frac{1}{2}\| \na \phi_n^\# \|^2_{L^2}- \frac{1}{2}\| \na \phi_n \|^2_{L^2}\leq 0, \label{decroi1} \ee
and that there exists a constant $\delta_0>0$ such that
\be \left\|  \nabla \phi_n -\nabla \phi_Q \right\|_{L^2} \leq \delta_0 \mbox{ implies }  \left\|  \nabla \phi_n^\# -\nabla \phi_Q \right\|_{L^2} \leq \delta_1. \label{delta}\ee
Assume first that these claims are true and take the sequence $\phi_n$ such that
\be \forall n\in \NN,\     \left\|  \nabla \phi_n -\nabla \phi_Q \right\|_{L^2} \leq \delta_0,\ \ \lim_{n\rightarrow+\infty}J(\phi_n) \leq J(\phi_Q). \ee
Then, from \eqref{decroi1} and \eqref{delta}, the sequence $\phi_n^\#$ satisfies \eqref{cond-coer-rad} and therefore satisfies \eqref{conv-rad}. Moreover we obtain the equality
$$\lim_{n\rightarrow+\infty}J(\phi_n^\#)= \lim_{n\rightarrow+\infty}J(\phi_n)= J(\phi_Q),$$
which implies from \eqref{decroi1} 
\be \lim_{n\rightarrow+\infty} \| \na \phi_n^\# \|_{L^2}  = \lim_{n\rightarrow+\infty} \| \na \phi_n \|_{L^2}. \label{conv-norm-na} \ee
From \eqref{conv-rad} and \eqref{conv-norm-na}, we are in the equality case of the Polya-Szego inequality. Thanks to Theorem 2 in \cite{BG}, we obtain the following compactness result : there exists a sequence of translation shifts in space, $x_n\in\RR$, such that 
$$\left\|  \nabla \phi_n -\nabla \phi_Q(\cdot-x_n) \right\|_{L^2} \rightarrow 0 \mbox{ as } n\rightarrow +\infty.$$
Let us now prove the claims  \eqref{decroi1} and \eqref{delta}. 
Remark that, from Schwarz rearrangement classical properties, for all $n\in \NN$,  $\phi_n^\#$ belongs to $\Phi_q$ and satisfies the Polya-Szego inequality
\be \| \na \phi_n^\# \|_{L^2}\leq  \| \na \phi_n \|_{L^2} \label{polya}. \ee
Moreover, for all $C^1$ function $\beta$ such that $\beta(0)=0$, one has
$$\int_{\RR^3} \beta(\phi_n(x))dx=\int_{\RR^3} \beta(\phi_n^\#(x))dx,$$
which for $e<0$ and for $\beta(t)= \left( (1+e-t)_+^2-1\right)_+^\frac{3}{2}$ implies $a_{\phi_n}=a_{\phi_n^\ast}$ given by the expression \eqref{formule-jac}. Thus, from \eqref{J0-exp},
$$\begin{array} {rcl}
\dis J(\phi_n)&=&\dis \frac{1}{2}  \| \na \phi_n \|_{L^2}^2-\int_{-\infty}^0 G(a_{\phi_n}(e))de \\ & & \\
& = &\dis  \frac{1}{2}  \| \na \phi_n \|_{L^2}^2-\int_{-\infty}^0 G(a_{\phi_n^\#}(e))de, 
\end{array}$$
which provides directly the claim \eqref{decroi1}. Let us prove the claim \eqref{delta} by a contradiction argument. We assume that there exists a sequence $\psi_n$ in $\dot{H}^1$ such that
\be \left\|  \nabla \psi_n -\nabla \phi_Q \right\|_{L^2} \leq \frac{1}{n} \mbox{ and }  \left\|  \nabla \psi_n^\# -\nabla \phi_Q \right\|_{L^2} > \delta_1. \label{delta2}\ee
The contractivity property of the rearrangement in $L^p$-norms and a Sobolev embedding give
$$\| \psi_n^\#-\phi_Q \|_{L^6} \leq \| \psi_n-\phi_Q \|_{L^6} \leq C \| \na \psi_n-\na \phi_Q \|_{L^2}\leq \frac{C}{n}.$$
Moreover by \eqref{polya} and \eqref{delta2}, we have
\be \| \na \psi_n^\# \|_{L^2}\leq  \| \na \psi_n \|_{L^2} \leq \| \na \phi_Q \|_{L^2} +\frac{1}{n} . \label{na-norm}\ee
Hence $\psi_n^\#$ is bounded in $\dot{H}^1_r$ and the sequence $\na \psi_n^\#$ converges to $\na \phi_Q$ in the $L^2$ weak topology. In fact, from Fatou lemma, the inequality \eqref{na-norm} implies
$$\lim_{n\rightarrow +\infty}\| \na \psi_n^\# \|_{L^2}=\|\na \phi_Q \|_{L^2}.$$
Thus the sequence $ \na \psi_n^\#$ converges to $\na \phi_Q$ in $L^2(\RR^3)$ which contradicts \eqref{delta2}.\\
Notice that, for all $q\in(6,12)$, the space $\Phi_q$ is included in $\dot{H}^1$ and thus $\delta_0$ does not depend on $q$. The proof of Proposition \ref{prop-coercivity} is now complete.


\section{Nonlinear stability of $Q$}\label{NSQ}

We are now ready to prove Theorem \ref{thm} in this section. The proof is based on the two following arguments : i) The local coercivity property of $J$ stated in Proposition \ref{prop-coercivity} which ensures the compactness of the potential field, and ii) The compactness of the whole distribution function in the energy space.

\subsection{Local compactness of the distribution function}\label{sub-compact}
We will prove in this subsection that the Proposition \ref{prop-coercivity} implies the following compactness result:
\begin{proposition}[Local compactness of local minimizing sequences]\label{comp-fonct} Let $p>\frac{3}{2}$. Let $\delta_0>0$ the constant defined in Proposition \ref{prop-coercivity} and let $f_n$ be a sequence of $\mathcal{E}_p$ such that
\be  \| f_n^\ast -Q^\ast \|_{L^1(\R)}\rightarrow 0,   \ \| f_n^\ast -Q^\ast \|_{L^p(\R)}\rightarrow 0,  \ \limsup_{n\rightarrow+\infty} \mathcal{H}(f_n) \leq \mathcal{H}(Q), \label{cond-f1n}  \ee
and
\be    \left\| \na \phi_{f_n}  -\na \phi_Q \right\|_{L^2} < \delta_0. \label{cond-delta0} \ee
Then there exists a translation shift $x_n$ such that
\be \left\|   f_n-Q(\cdot-x_n) \right\|_{\mathcal{E}_p} \rightarrow 0 \mbox{ as } n\rightarrow +\infty. \label{conclusion-comp}\ee
\end{proposition}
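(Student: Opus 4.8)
\emph{Strategy.} The plan is to run the scheme of \cite{LMR-inv}: upgrade the inequality \eqref{linkHJ} to an exact identity whose three pieces are each nonnegative up to $o(1)$, use the coercivity of Proposition \ref{prop-coercivity} to obtain compactness of the potential, and then transfer compactness to the full distribution function through the rearrangement with respect to the microscopic energy. Concretely, writing $e_\phi(x,v)=\sqrt{1+|v|^2}-1+\phi(x)$ and using the relations $\mathcal{H}(g)=\int_{\R^6}e_{\phi_g}g\,dxdv+\tfrac12\|\na\phi_g\|_{L^2}^2$, $\int_{\R^6}e_\phi g^{\ast\phi}\,dxdv=\int_0^{+\infty}a_\phi^{-1}(s)g^\ast(s)\,ds$ and $Q=Q^{\ast\phi_Q}$ (all collected in the appendix), one gets for each $n$
\[
\mathcal{H}(f_n)-\mathcal{H}(Q)=\big(J(\phi_{f_n})-J(\phi_Q)\big)+B_n+C_n,
\]
where $B_n:=\int_0^{+\infty}a_{\phi_{f_n}}^{-1}(s)\big(f_n^\ast-Q^\ast\big)(s)\,ds$ and $C_n:=\int_{\R^6}e_{\phi_{f_n}}\big(f_n-f_n^{\ast\phi_{f_n}}\big)\,dxdv$, and $C_n\ge 0$ since, by the Hardy--Littlewood (bathtub) principle, $f_n^{\ast\phi_{f_n}}$ minimises $g\mapsto\int_{\R^6}e_{\phi_{f_n}}g$ among all $g\ge 0$ equimeasurable with $f_n$.

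\emph{Compactness of the potential.} First I would prove $B_n\to 0$: the Hamiltonian bound in \eqref{cond-f1n} together with \eqref{interpolation} controls the kinetic energy of $f_n$, so $(f_n)$ is bounded in $\mathcal{E}_p$ and, by Lemma \ref{lem-ouf}, $(\phi_{f_n})$ is bounded in some $\Phi_q$; since $a_{\phi_{f_n}}^{-1}\le 0$ and is uniformly controlled on the $s$-range carrying $f_n^\ast-Q^\ast$ (this is where the possible non-continuity of $\phi_{f_n}$ forces the use of the appendix estimates on the jacobian, together with a uniform lower bound on $m(\phi_{f_n})$, itself coming from $\phi_{f_n}$ being $L^6$-close to the nonpositive continuous $\phi_Q$), one gets $|B_n|\lesssim\|f_n^\ast-Q^\ast\|_{L^1}+\|f_n^\ast-Q^\ast\|_{L^p}\to 0$. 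Since $C_n\ge 0$ and $\limsup_n(\mathcal{H}(f_n)-\mathcal{H}(Q))\le 0$, the identity yields $\limsup_n J(\phi_{f_n})\le J(\phi_Q)$, and together with \eqref{cond-delta0} this is exactly the setting of Proposition \ref{prop-coercivity}, which provides shifts $x_n$ with $\|\na\phi_{f_n}-\na\phi_Q(\cdot-x_n)\|_{L^2}\to 0$. Replacing $f_n$ by $f_n(\cdot+x_n,\cdot)$ — which changes neither $f_n^\ast$ nor $\mathcal{H}(f_n)$ nor $B_n,C_n$, and merely translates $\phi_{f_n}$ — I may assume $x_n=0$, so $\phi_{f_n}\to\phi_Q$ in $\Phi_q$. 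By the continuity of $\phi\mapsto J(\phi)$ along this convergence (which follows from the continuity of $\phi\mapsto a_\phi$, cf. the proof of Lemma \ref{taylor-J} and the appendix), $J(\phi_{f_n})\to J(\phi_Q)$, and feeding this back into the identity gives $\limsup_n C_n\le 0$; with $C_n\ge 0$, hence $C_n\to 0$.

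\emph{Compactness of the distribution function.} Two convergences now combine. On one hand $f_n^\ast\to Q^\ast$ in $L^1\cap L^p$ and $\phi_{f_n}\to\phi_Q$ in $\Phi_q$, so by the continuity of the generalized rearrangement map $(\psi,g)\mapsto g^{\ast\psi}$ (established in the appendix) $f_n^{\ast\phi_{f_n}}\to Q^{\ast\phi_Q}=Q$ in $\mathcal{E}_p$. On the other hand $f_n$ and $f_n^{\ast\phi_{f_n}}$ are equimeasurable and $C_n\to 0$, so the quantitative rigidity of the equality case in the bathtub inequality (again from the appendix) forces $\|f_n-f_n^{\ast\phi_{f_n}}\|_{\mathcal{E}_p}\to 0$. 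By the triangle inequality $\|f_n-Q\|_{\mathcal{E}_p}\to 0$, which, undoing the translation, is \eqref{conclusion-comp}.

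\emph{Main obstacle.} The algebraic first step and the appeal to Proposition \ref{prop-coercivity} are routine; the genuinely delicate points are the uniform (in $n$) control of the jacobian $a_{\phi_{f_n}}$ and of the weight $e_{\phi_{f_n}}$ needed both for $B_n\to 0$ and for the rigidity step, since here the potentials are only known to lie in $\Phi_q$ (not bounded, so $a_\phi$ cannot be bounded a priori) and the transport weight $\sqrt{1+|v|^2}-1$ is not homogeneous — the kinetic energy being recovered only through the Hamiltonian bound. These are precisely the facts to be isolated in the study of the rearrangement with respect to the microscopic energy carried out in Appendices~A and~B; granting them, the architecture is that of \cite{LMR-inv}.
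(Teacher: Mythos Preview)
Your Step 1 (compactness of the potential) is essentially the paper's argument, modulo one slip: the kinetic energy bound on $(f_n)$ does not come from \eqref{interpolation} (that would require the subcritical condition, which the paper explicitly avoids), but simply from \eqref{cond-delta0}, which bounds $\|\na\phi_{f_n}\|_{L^2}$ and hence $\|(\sqrt{1+|v|^2}-1)f_n\|_{L^1}=\mathcal{H}(f_n)+\tfrac12\|\na\phi_{f_n}\|_{L^2}^2$.

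Your Step 2, however, is genuinely different from the paper's and rests on two lemmas that are \emph{not} in Appendices~A--B: (a) continuity of $(\psi,g)\mapsto g^{\ast\psi}$ in $\mathcal{E}_p$, and (b) a quantitative rigidity ``$C_n\to 0\Rightarrow\|f_n-f_n^{\ast\phi_{f_n}}\|_{\mathcal{E}_p}\to 0$'' with \emph{varying} potentials $\phi_{f_n}$. The appendix gives only the qualitative equality case of (b), and nothing like (a). Both statements are plausible but nontrivial here precisely because $\phi_{f_n}$ is a priori unbounded and one only has $\dot H^1$ convergence; this is the very difficulty the paper flags.

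The paper sidesteps both lemmas by a change of reference potential: after the translation, it works with the \emph{fixed} bounded potential $\phi_Q$, not with $\phi_{f_n}$. It uses the exact identity
\[
\mathcal{H}(f_n)=\mathcal{H}(Q)+\tfrac12\|\na\phi_{f_n}-\na\phi_Q\|_{L^2}^2+\int_{\R^6}e_{\phi_Q}\,(f_n-Q)\,dxdv
\]
to obtain $T_n:=\int e_{\phi_Q}(f_n-f_n^{\ast\phi_Q})\,dxdv\to 0$, and then carries out an explicit layer-cake argument (sets $S_1^n(t),S_2^n(t),\Omega_2^n(t)$) to upgrade $T_n\to 0$ to $\|(Q-f_n)_+\|_{L^1}\to 0$, and from there to full $\mathcal{E}_p$ convergence. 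The boundedness of $\phi_Q$ (hence of $e_{\phi_Q}$, of $(Q^\ast\circ a_{\phi_Q})^{-1}$, etc.) is used repeatedly in this computation, which is exactly why the switch from $\phi_{f_n}$ to $\phi_Q$ is the key move. In short: your architecture would work if (a) and (b) were available, but the paper proves the proposition without them, by this more hands-on route.
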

\begin{proof}[Proof of Proposition \ref{comp-fonct}] Let $(f_n)$ a sequence of $\mathcal{E}_p$ satisfying \eqref{cond-f1n} and \eqref{cond-delta0}.


\bs
{\em Step 1: Compactness of the potential.} We first remark that, from inequality \eqref{ineg-rearr}, we have
$$\mathcal{H}(f_n)\geq \int_{\R^6} \left(\sqrt{|v|^2+1} -1 +\phi_{f_n}(x) \right) f_n^{\ast \phi_{f_n} }dx dv +\frac{1}{2} \left\|  \nabla \phi_{f_n} \right\|_{L^2}^2,$$
which implies, from the change of variable \eqref{chg-var},
$$\begin{array} {rcl}
\dis \mathcal{H}(f_n)-J(\phi_{f_n}) & \geq & \dis \int_{\R^6} \left(\sqrt{|v|^2+1} -1 +\phi_{f_n}(x) \right) \left( f_n^{\ast \phi_{f_n} }-  Q^{\ast \phi_{f_n} } \right)dx dv \\
  & \geq  & \dis  \int_0^{+\infty} a_{\phi_{f_n} }^{-1}(s) (f_n^\ast(s)-Q^\ast(s)) ds.
\end{array}$$ 
Finally
\be \mathcal{H}(f_n)-H(Q)\geq J(\phi_{f_n})-J(\phi_Q)+ \int_0^{+\infty} a_{\phi_{f_n} }^{-1}(s) (f_n^\ast(s)-Q^\ast(s)) ds.\label{H-J} \ee
Now, since $(f_n)$ satisfies \eqref{cond-f1n} and \eqref{cond-delta0}, the sequence $(f_n)$ is bounded in the energy space $\mathcal{E}_p$. From classical interpolation inequalities, for 
$$q=\frac{3(4p-3)}{p} \in(6,12),$$
the sequence $\phi_{f_n}$ belongs to $\Phi_q$ and is bounded in $L^q(\RR^3)$. Thus, from \eqref{control-jac} we have
\be   \left| a_{\phi_{f_n}}^{-1}(s)\right| \leq C \left( \frac{\dis 1 }{\dis s^\frac{1}{q-3}}+ \frac{\dis 1 }{\dis s^\frac{1}{q}} \right).\label{allez}\ee
Hence we obtain from H\"older inequalities
$$ \left| \int_1^{+\infty} a_{\phi_{f_n} }^{-1}(s) (f_n^\ast(s)-Q^\ast(s)) ds\right| \leq 2C \| f_n^\ast -Q^\ast \|_{L^1(\RR)},$$
and
$$\left| \int_0^1 a_{\phi_{f_n} }^{-1}(s) (f_n^\ast(s)-Q^\ast(s)) ds\right| \leq \|a_{\phi_{f_n} }^{-1}\|_{L^{p'}(0,1)} \| f_n^\ast -Q^\ast \|_{L^p(\RR)},$$
where $p'=\frac{p}{p-1}<3$. Notice that $\frac{p'}{q-3}<1$ which gives, from \eqref{allez}, the boundedness :  
$$\|a_{\phi_{f_n} }^{-1}\|^{p'}_{L^{p'}(0,1)} \leq 2C \int_0^1 \frac{1}{s^\frac{p'}{q-1}} ds=2C\left(1-\frac{p'}{q-1}\right).$$ 
We have then the convergence
$$\int_0^{+\infty} a_{\phi_{f_n} }^{-1}(s) (f_n^\ast(s)-Q^\ast(s)) ds \rightarrow 0$$
as $n\rightarrow +\infty$ and, injecting this in \eqref{H-J}, we conclude from \eqref{cond-f1n} that
\be  \lim_{n\rightarrow +\infty} J(\phi_{f_n}) \leq J(\phi_Q). \ee
Together with the condition \eqref{cond-delta0}, this allows us to apply Proposition \eqref{prop-coercivity} and we conclude that there exists a sequence of translation shifts in space $x_n$ such that
\be \left\|  \nabla \phi_{f_n} -\nabla \phi_Q( \cdot -x_n) \right\|_{L^2} \rightarrow 0 \ \textrm{ as } n \rightarrow +\infty. \label{conv-grad}\ee

\bs

{\em Step 2: Convergence of $f_n\left(\cdot +x_n, \cdot \right)$ in $\mathcal{E}_p$.} To obtain the convergence in the energy space $\mathcal{E}_p$, the method that we chose is very similar with the method developed in \cite{LMR-inv}. We renote $f_n:=f_n\left(\cdot +x_n, \cdot \right)$. We remark first that since $Q=Q^{\ast \phi_Q}$
$$\begin{array} {rcl}
 \dis \left| \int_{\R^6} \left(\sqrt{|v|^2+1} -1 +\phi_Q\right) \left( f_n^{\ast \phi_Q}- Q\right) dx dv \right| & = &  \dis\left| \int_0^\infty a_{\phi_Q}^{-1}(s)\left( f_n^\ast(s)-Q^\ast(s) \right)ds\right| \\ \\
 & \leq &  \|\phi_Q\|_{L^\infty} \left\|f_n^\ast -Q^\ast \right\|_{L^1},
\end{array}$$
and thus
\be  \int_{\R^6} \left(\sqrt{|v|^2+1} -1 +\phi_Q(x) \right) \left( f_n^{\ast \phi_Q}- Q\right) dx dv \rightarrow 0 \label{convast1}\ee
Now, from the inequality \eqref{ineg-rearr}, it implies that
$$\liminf_{n\rightarrow +\infty} \int_{\R^6} \left(\sqrt{|v|^2+1} -1 +\phi_Q(x) \right) \left( f_n- Q\right) dx dv\geq 0.$$
Hence, since
$$\mathcal{H}(f_n)=\mathcal{H}(Q) +\frac{1}{2} \left\| \nabla\phi_{f_n} -\nabla \phi_Q \right\|_{L^2}^2 +\int_{\R^6} \left(\sqrt{|v|^2+1} -1 +\phi_Q(x) \right) \left( f_n- Q\right) dx dv,$$
in which 
$$ \limsup_{n\rightarrow +\infty} \mathcal{H}(f_n) \leq \mathcal{H}(Q) \textrm{ and } \lim_{n\rightarrow +\infty} \left\|  \nabla \phi_{f_n} -\nabla \phi_Q \right\|_{L^2}=0,$$
we obtain, as $n\rightarrow +\infty$,
\be \int_{\R^6} \left(\sqrt{|v|^2+1} -1 +\phi_Q(x) \right) \left( f_n- Q\right) dx dv \rightarrow 0. \label{conv-fnQ}\ee
The two convergences \eqref{convast1} and \eqref{conv-fnQ} yield
\be T_n:=\int_{\R^6} e_Q(x,v)\left( f_n- f_n^{\ast \phi_{Q}}\right) dx dv \rightarrow 0  \mbox{ as } n\rightarrow +\infty, \label{conv-fnfnast}\ee
where $e_Q(x,v):=\sqrt{|v|^2+1} -1 +\phi_Q(x)$.
As in the proof of \eqref{ineg-rearr}, we write $T_n$ in the following equivalent form
$$T_n=\int_{t=0}^{+\infty} dt \left( \int_{S^n_1(t)} e_Q(x,v) dxdv -\int_{S^n_2(t)} e_Q(x,v) dxdv \right),$$
where
$$S^n_1(t)=\{(x,v)\in \RR^6, \ \ f_n^{\ast \phi_Q}(x,v)  \leq t < f_n(x,v) \},$$
$$S^n_2(t)=\{(x,v)\in \RR^6, \ \ f_n(x,v)  \leq t < f_n^{\ast \phi_Q}(x,v) \}.$$
From \eqref{inverse2}, we have 
$$\forall(x,v) \in S_1^n(t),\ \ \  e_Q(x,v)\geq (f_n^\ast \circ a_{\phi_Q})^{-1}(t).$$
Thus
$$T_n\geq \int_{t=0}^{+\infty} dt \left( \int_{S^n_1(t)} (f_n^\ast \circ a_{\phi_Q})^{-1}(t) dxdv -\int_{S^n_2(t)} e_Q(x,v) dxdv \right),$$
and since $\mbox{meas}(S_1^n(t))=\mbox{meas}(S_2^n(t))$ for all $t\in\RR_+$,
$$T_n\geq \int_{t=0}^{+\infty} dt \int_{S^n_2(t)} \left[ (f_n^\ast \circ a_{\phi_Q})^{-1}(t) - e_Q(x,v) \right] dxdv.$$
Remark from \eqref{inverse1}, that the right term is nonnegative and thus, from \eqref{conv-fnfnast}, we get as $n\rightarrow +\infty$
\be A_n:=\int_{t=0}^{+\infty} dt \int_{S^n_2(t)} \left[ (f_n^\ast \circ a_{\phi_Q})^{-1}(t) - e_Q(x,v) \right] dxdv \rightarrow 0 \label{conv-An}\ee
We now claim that this implies
\be B_n:=\int_{t=0}^{+\infty} dt \int_{\Omega^n_2(t)} \left[ (Q^\ast \circ a_{\phi_Q})^{-1}(t) - e_Q(x,v) \right] dxdv \rightarrow 0 \label{conv-Bn} \ee
as $n\rightarrow +\infty$ where
$$\Omega^n_2(t)=\{(x,v)\in \RR^6, \ \ f_n(x,v)  \leq t < Q(x,v) \}.$$
To prove it, we decompose
$$S^n_2=(S^n_2\backslash \Omega^n_2) \cup (S^n_2 \cap \Omega^n_2)\ \mbox{ and } \ \Omega^n_2=(\Omega^n_2\backslash S^n_2) \cup (S^n_2 \cap \Omega^n_2).$$
Thus
\be \begin{array}{rl}
A_n-B_n = & \dis \int_{t=0}^{+\infty} dt \int_{\Omega^n_2(t)\backslash S^n_2(t)} \left[ e_Q(x,v) -(Q^\ast \circ a_{\phi_Q})^{-1}(t)  \right] dxdv \\
                    & \dis + \int_{t=0}^{+\infty} dt \int_{S^n_2(t)\backslash \Omega^n_2(t)} \left[ (f_n^\ast \circ a_{\phi_Q})^{-1}(t) - e_Q(x,v) \right] dxdv \\
                    & \dis + \int_{t=0}^{+\infty} dt \int_{S^n_2(t)\cap \Omega^n_2(t)} \left[ (f_n^\ast \circ a_{\phi_Q})^{-1}(t) - (Q^\ast \circ a_{\phi_Q})^{-1}(t) \right] dxdv
                    \end{array} \label{An-Bn}\ee
Now let us examine each term as $n\rightarrow +\infty$. We first observe that for $g,h \in L^6(\RR^6)$ we have
\be \int_0^{+\infty} \mbox{meas}(\{g< t \leq h \})dt = \int_{\RR^6} ( h-g)_+ dxdv,.\label{pasmal}\ee
Thus we obtain
$$ \int_0^{+\infty} \mbox{meas}(S^n_2(t)\backslash \Omega^n_2(t))dt  \leq  \int_0^{+\infty} \mbox{meas}(\{Q< t \leq  f_n^{\ast \phi_Q} \})dt = \int_{\RR^6} ( f_n^{\ast \phi_Q}-Q)_+ dxdv, $$
which gives, from \eqref{cond-f1n},
$$  \int_0^{+\infty} \mbox{meas}(S^n_2(t)\backslash \Omega^n_2(t))dt  \leq \|f_n^{\ast \phi_Q}-Q\|_{L^1} =|f_n^\ast-Q^\ast|_{L^1}\rightarrow 0, $$
and similarly for $\mbox{meas}(\Omega^n_2(t)\backslash S^n_2(t))$. Using in addition the estimate
$$| e_Q(x,v)| \leq |\phi_Q(0)|, \ \  \left|(f_n^\ast \circ a_{\phi_Q})^{-1}(t)\right| \leq |\phi_Q(0)|,\ \ \left|(Q^\ast \circ a_{\phi_Q})^{-1}(t)\right| \leq |\phi_Q(0)|,$$
we deduce that the first two terms of \eqref{An-Bn} converge to $0$ as $n\rightarrow +\infty$. We now deal with the third term.
Combining the strong $L^1$ convergence in \eqref{cond-f1n}, the monotonicity of $f_n^\ast$ and the continuity of $Q^\ast$, we get
$$\forall e \in (\phi_Q(0),0),\ \ f_n^\ast \circ a_{\phi_Q}(e)\rightarrow Q^\ast \circ a_{\phi_Q}(e) \mbox{ as } n\rightarrow +\infty.$$
Thus for $e \in (\phi_Q(0),0)$ such that $Q^\ast \circ a_{\phi_Q}(e)>t$ we have for $n$ large enough 
$$f_n^\ast \circ a_{\phi_Q}(e)>t,$$
which from the definition of the pseudoinverse $(f_n^\ast \circ a_{\phi_Q})^{-1}$ provides
$$e \leq \liminf_{n\rightarrow +\infty} (f_n^\ast \circ a_{\phi_Q})^{-1}(t).$$
From the definition of $(Q^\ast \circ a_{\phi_Q})^{-1}$, we conlude that
$$\liminf_{n\rightarrow +\infty} (f_n^\ast \circ a_{\phi_Q})^{-1}(t) \geq (Q^\ast \circ a_{\phi_Q})^{-1}(t).$$
We just inject it into the third term of \eqref{An-Bn} to obtain
$$\liminf_{n\rightarrow +\infty} (A_n-B_n) \geq 0.$$
Moreover, from \eqref{conv-An}, from the definition of $\Omega^n_2$ and from \eqref{inverse1}, we have
$$A_n\rightarrow 0 \mbox{ and } B_n\geq 0.$$
We conclude that the convergence \eqref{conv-Bn} holds true: 
\be \int_{t=0}^{+\infty} dt \int_{\{f_n\leq t<Q\}} \left[ (Q^\ast \circ a_{\phi_Q})^{-1}(t) - e_Q(x,v) \right] dxdv \rightarrow 0.\label{enfin1} \ee
Since $e\mapsto F(e)$ is continuous and strictly decreasing with respect to $e=\sqrt{|v|^2+1}-1+\phi_Q(x)$ for $(x,v)\in\{Q(x,v)>0\}$, we have
$$t<Q(x,v) \mbox{ implies } Q^\ast \circ a_{\phi_Q})^{-1}(t) - e_Q(x,v)>0.$$
Thus, up to a subsequence,
$$ \mbox{ for a.e. } (t,x,v) \in \RR_+^\ast \times \RR^6,\ \ \mathbf{1}_{\{f_n\leq t<Q\}} \rightarrow 0, \mbox{ as } n \rightarrow +\infty.$$
Since $\mathbf{1}_{\{f_n\leq t<Q\}}\leq \mathbf{1}_{\{ t<Q\}}$ and 
$$ \int_{t=0}^{+\infty} dt \int_{\RR^6} \mathbf{1}_{\{t<Q\}}  dxdvdt =\|Q\|_{L^1}<+\infty, $$
we may apply the dominated convergence theorem to get:
$$ \int_{t=0}^{+\infty} dt \int_{\RR^6} \mathbf{1}_{\{f_n\leq t<Q\}}  dxdvdt\rightarrow 0 \mbox{ as } n \rightarrow +\infty,$$
which, from \eqref{pasmal} is equivalent to
\be \int_{\RR^6} (Q-f_n)_+ dxdv \rightarrow 0 \mbox{ as } n \rightarrow +\infty.\label{presque}\ee
Now we write
$$\begin{array}{rcl}
\dis \int_{\RR^6} (f_n-Q)_+ dxdv & \leq & \dis \int_{\RR^6} (f_n-f_n^{\ast \phi_Q})_+ dxdv + \int_{\RR^6} (f_n^{\ast \phi_Q}-Q)_+ dxdv \\
& \leq & \dis \int_0^{+\infty} \mbox{meas}(\{f_n^{\ast \phi_Q} \leq t<f_n\}) dt + \| f_n^{\ast \phi_Q}-Q \|_{L^1},
\end{array} $$
where, from the equimeasurability of $f_n$ and $f_n^{\ast \phi_Q}$, we have
$$\begin{array}{rcl}
\dis \int_0^{+\infty} \mbox{meas}(\{f_n^{\ast \phi_Q} \leq t<f_n\}) dt & = & \dis \int_0^{+\infty} \mbox{meas}(\{f_n \leq t<f_n^{\ast \phi_Q}\}) dt \\
 & & = \dis \int_{\RR^6} (f_n^{\ast \phi_Q}-f_n)_+ dxdv \\
 & \leq & \dis  \int_{\RR^6} (Q-f_n)_+ dxdv  + \int_{\RR^6} (f_n^{\ast \phi_Q}-Q)_+ dxdv.
\end{array}$$
We finally get
$$ \int_{\RR^6} (f_n-Q)_+ dxdv \leq \int_{\RR^6} (Q-f_n)_+ dxdv + 2 \| f_n^{\ast \phi_Q}-Q \|_{L^1}.$$
Now, since $\| f_n^{\ast \phi_Q}-Q \|_{L^1}=\|f_n^\ast-Q^\ast\|_{L^1} \rightarrow 0$, we obtain from \eqref{presque} the $L^1$ convergence
\be \| f_n-Q \|_{L_1(\RR^6)} \rightarrow 0 \mbox{ as } n \rightarrow +\infty. \label{L1conv} \ee
Now, from the convergence of $\na \phi_{f_n}$ to $\na \phi_Q$ in $L^2(\RR^3)$ and from \eqref{cond-f1n} we have
\be \liminf_{n\rightarrow +\infty} \int_{\R^6} \left(\sqrt{|v|^2+1} -1 \right) \left( f_n- Q\right) dx dv \leq 0.\label{oups} \ee
But $f_n$ converges almost everywhere in $\RR^6$ to $Q$, thus we have an equality in \eqref{oups} and thus the convergence is strong:
\be \int_{\R^6} \left(\sqrt{|v|^2+1} \right) \left| f_n- Q\right| dx dv\rightarrow 0 \mbox{ as } n \rightarrow +\infty. \label{kinconv} \ee
Similarly, we remark that
$$\| f_n \|_{L^p(\RR^6)}= \| f_n^\ast  \|_{L^p(\RR_+)} \rightarrow \| Q^\ast  \|_{L^p(\RR_+)} =\| Q \|_{L^p(\RR^6)},$$
when $n\rightarrow +\infty$ and we obtain the strong $L^p$ convergence and the proof of Proposition \ref{comp-fonct} is completed.

\end{proof}


\subsection{Stability from the local compactness}\label{sub-stab}

Let $p>\frac{3}{2}$. From the Hardy-Littlewood-Sobolev inequality and H\"older inequalities, we have from the classical interpolation's inequality:
\be \left\| \nabla \phi_{f}-\nabla \phi_g \right\|_{L^2}\leq  K \left\| \rho_f -\rho_g \right\|_{L^\frac{6}{5}} \leq  K\left\| f-g \right\|_{L^1}^\frac{2p-3}{6(p-1)}  \left\| f-g \right\|_{L^p}^\frac{p}{6(p-1)}  \left\| |v| (f-g) \right\|_{L^1}^\frac{1}{2}.\label{inter} \ee
Thus for all $f\in\mathcal{E}_p$
\be  \left\|   f - Q \right\|_{\mathcal{E}_p}<\eps \mbox{ implies } \left\| \nabla \phi_{f}-\nabla \phi_Q \right\|_{L^2} < K \eps. \label{cool1} \ee 
Let fixed $\eps_0>0$ such that 
$$K \eps_0 < \frac{\delta_0}{2},$$
where $K$ is the constant in \eqref{cool1} and $\delta_0$ is defined by Proposition \ref{comp-fonct}. Let $\eps \in (0,\eps_0)$. Then an equivalent reformulation of Proposition \ref{comp-fonct} is the following: there exists 
 $$0<\eta<\eps_0$$
such that the following sentence holds true: if $f\in \mathcal{E}_p$ is such that
\be  \| f^\ast -Q^\ast \|_{L^1(\R)}\leq \eta,   \ \| f^\ast -Q^\ast \|_{L^p(\R)} \leq \eta,  \  \mathcal{H}(f) \leq \mathcal{H}(Q)+\eta, \label{yo1}  \ee
and
\be    \left\| \na \phi_{f}  -\na \phi_Q \right\|_{L^2} < \delta_0, \label{yo2} \ee
then there exists a translation shift $y\in\RR^3$ such that
\be \left\|   f-Q(\cdot-y) \right\|_{\mathcal{E}_p} < \eps. \label{yo3}\ee
Remark that the assumption \eqref{yo2} can be replaced by
\be  \inf_{z\in\RR^3} \left\| \na \phi_{f}(\cdot+z)  -\na \phi_Q \right\|_{L^2} < \delta_0. \label{yo2bis} \ee
Indeed, on the first hand, if the condition \eqref{yo2} is satisfied, the condition \eqref{yo2bis} is satisfied too. In the other hand, if a function $f$ satisfies \eqref{yo1} and \eqref{yo2bis}, then there exists $z\in\RR^3$ such that $\tilde{f}=f(\cdot+z,\cdot)$ satisfies \eqref{yo2}. But $\tilde{f}$ satisfies \eqref{yo1} too. Thus we have \eqref{yo3} for $\tilde{f}$ and also for $f$.

Now we prove theorem \ref{thm}. Let $f_0 \in\mathcal{E}_p$ such that 
\be \left\|   f _0- Q \right\|_{\mathcal{E}_p}<\eta.\label{init}\ee
Let $f(t)\in \mathcal{F}( [0,T),\mathcal{E}_p)$ a corresponding renormalized solution to \eqref{vpr} as stated in \cite{DL1,DL2}. We want to show that $f(t)$ satisfies \eqref{yo3} for all $t\in[0,T)$. 

Let us first prove that $f(t)$ satisfies \eqref{yo1} for all $t$. From \eqref{init}, we have 
$$\| f_0 -Q \|_{L^1(\RR^6)}\leq \eta,   \ \| f_0 -Q \|_{L^p(\RR^6)} \leq \eta,  \  \mathcal{H}(f_0) \leq \mathcal{H}(Q)+\eta, $$
and from the property of contraction of  symmetric rearrangement: 
$$\| f^\ast -Q^\ast \|_{L^p(\RR_+)} \leq \| f -Q \|_{L^p(\RR^6)},$$
we deduce that $f_0$ satisfies \eqref{yo1}. By conservation of the flow, we have 
$$\forall t\in (0,T), \ \ f(t)^\ast=f_0^\ast \mbox{ and } \mathcal{H}(f(t))\leq \mathcal{H}(f_0).$$
We conclude that \eqref{yo1} is satisfied for all $t\in(0,T)$.

Let us now prove that \eqref{yo2bis} is satisfied for all $t\in(0,T)$. At $t=0$, since $\eta<\eps_0$, from \eqref{cool1}, we have
$$\left\| \nabla \phi_{f_0}-\nabla \phi_Q \right\|_{L^2}<\frac{\delta_0}{2}.$$
Moreover, from the regularity of the flow, the potential satisfies 
$$t\mapsto \na \phi_{f(t)} \in \mathcal{C}([0,T),L^2(\RR^3)),$$
which provides the continuity on $[0,T)$ of the function
$$\beta : t\mapsto \inf_{z\in\RR^3} \left\| \na \phi_{f(t)}(\cdot+z)  -\na \phi_Q \right\|_{L^2},$$
Indeed for all $z\in\RR^3$, we have the uniform boundedness

\bs
\ni
$\dis \left| \left\| \na \phi_{f(t)}(\cdot+z)  -\na \phi_Q \right\|_{L^2}-\left\| \na \phi_{f(t_0)}(\cdot+z)  -\na \phi_Q \right\|_{L^2}\right|$
$$\ \ \ \ \ \ \ \leq  \left\| \na \phi_{f(t)}(\cdot+z)  -\na \phi_{f(t_0)}(\cdot+z)  \right\|_{L^2}=\left\| \na \phi_{f(t)}  -\na \phi_{f(t_0)} \right\|_{L^2},$$
which provides the continuity of $\beta$.
Now, by a contradiction argument, assume that there exists $t_1>0$ such that  $\beta(t_1)>\delta_0$. Since $\beta(0)<\delta_0/2$ there exists $t_2>0$ such that $\beta(t_2)=3\delta_q/4$. In particular, $f(t_2)$ satisfies \eqref{yo1} and \eqref{yo2bis} and thus there exists $z(t_2)\in \RR^3$ such that
$$\left\|   f(t_2)-Q(\cdot-x(t_2)) \right\|_{\mathcal{E}_p}<\eps<\eps_0.$$
Injecting it into \eqref{cool1}, we conclude that $\beta(t_2)<\delta_0/2$, which contradicts our assumption. The proof of Theorem \eqref{thm} is complete.


\appendix
\section{Jacobian of the microscopic energy} \label{app}
 We give some useful properties of the jacobian $a_\phi$ given by \eqref{def-jac} and the rearrangement with respect to the microscopic energy defined in \eqref{def-rearrang}. We recall that the space $\Phi_q$ is defined for $q>3$ by \eqref{def-phiq}.

We first gather in the following two lemmas some important properties of the jacobian $a_\phi$.

\begin{lemma}[Properties of the Jacobian $a_\phi$]\label{jacobian} Let  $\phi\in \Phi_q$ with $q>3$. We recall that the Jacobian $a_\phi$ is defined as 
\be \forall e<0, \ \ a_\phi(e)=\textrm{meas} \left\{(x,v)\in\R^6 \ : \ \sqrt{|v|^2+1} -1 +\phi(x)<e \right\}. \ee
Then:

(i) We have the explicit formula:
\be \forall e<0, \ \ a_\phi(e)=\frac{4\pi}{3} \int_{\R^3} \left( (1+e-\phi(x))_+^2-1\right)_+^\frac{3}{2} dx. \label{formule-jac} \ee
Notice that $\left( (1+e-\phi(x))_+^2-1\right)_+=\left( (1+e-\phi(x))^2-1\right)\mathbf{1}_{e-\phi(x)>0}(x).$

(ii) $a_\phi$ is $\mathcal{C}^1$ on $(-\infty,0)$ and is a strictly increasing $\mathcal{C}^1$ diffeomorphism from $(\inf \phi, 0)$ onto $\R_+^\ast$, which defines $a_\phi^{-1}$. Moreover there exist two positive constants $C$ and $\tilde{C}$ such that for all $e<0$ and for all $s >0$
\be a_\phi(e) \leq  \frac{C}{|e|^{q-3}}\left(1+\frac{1}{|e|^3} \right)\| \phi\|^q_{L^q}  \ \textrm{ and } \ a_\phi^{-1}(s)\geq -\tilde{C}\left( \frac{\dis \| \phi\|^\frac{q}{q-3}_{L^q} }{\dis s^\frac{1}{q-3}}+ \frac{\dis \| \phi\|_{L^q} }{\dis s^\frac{1}{q}} \right).\label{control-jac}\ee
The quantity $\inf \phi$ is the essential infimum of the measurable function $\phi$. 
\end{lemma}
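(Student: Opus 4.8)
The plan is to compute $a_\phi$ explicitly by integrating in $v$ at fixed $x$, then to read off all the analytic properties from the resulting formula. \textbf{Step 1 (explicit formula).} For fixed $x$, the condition $\sqrt{1+|v|^2}-1+\phi(x)<e$ is equivalent to $\sqrt{1+|v|^2}<1+e-\phi(x)$, which has solutions only when $1+e-\phi(x)>0$ (recall $\phi\leq 0$, so this is automatic when $e>\phi(x)$, and one needs $e-\phi(x)>0$ for the ball to be nonempty), and then describes the ball $|v|^2<(1+e-\phi(x))^2-1$. Integrating the volume of this ball, $\frac{4\pi}{3}\big((1+e-\phi(x))_+^2-1\big)_+^{3/2}$, over $x\in\RR^3$ gives \eqref{formule-jac}; the finiteness of the integral for $e<0$ follows since the integrand is supported where $|\phi(x)|>|e|$, a set of finite measure because $\phi\in L^q$, and on that set the integrand is controlled by a power of $|\phi(x)|$, again integrable by $\phi\in L^q$ (this is exactly the computation that will reappear in Step 3).

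\textbf{Step 2 ($\mathcal C^1$ regularity, monotonicity, diffeomorphism).} Differentiating under the integral sign in \eqref{formule-jac} — justified by a dominated-convergence argument using the $L^q$ control of $\phi$ exactly as in Step 1 — gives $a_\phi'(e)=4\pi\int_{\RR^3}\big((1+e-\phi(x))_+^2-1\big)_+^{1/2}(1+e-\phi(x))\,dx>0$ for $e>\inf\phi$, since the integrand is nonnegative and strictly positive on a set of positive measure whenever $e>\inf\phi$. Hence $a_\phi$ is strictly increasing and $\mathcal C^1$ on $(\inf\phi,0)$. For the boundary behaviour: as $e\downarrow\inf\phi$ the integrand in \eqref{formule-jac} tends to $0$ pointwise and is dominated, so $a_\phi(e)\to 0$; as $e\uparrow 0$ one checks $a_\phi(e)\to+\infty$ because the support $\{|\phi(x)|>|e|\}$ exhausts $\RR^3$ (it has infinite measure once $|e|$ is smaller than $\sup|\phi|$ on an unbounded set — here one uses $m(\phi)>0$, i.e. $|\phi(x)|\geq m(\phi)/(1+|x|)$, so $\{|\phi|>|e|\}$ contains a ball of radius $\sim m(\phi)/|e|$). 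Together with continuity and strict monotonicity this makes $a_\phi:(\inf\phi,0)\to\RR_+^\ast$ a $\mathcal C^1$ diffeomorphism, defining $a_\phi^{-1}$.

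\textbf{Step 3 (quantitative bounds).} For the upper bound on $a_\phi(e)$: on the support one has $0<1+e-\phi(x)\leq |\phi(x)|$ (using $e<0$, $\phi\leq0$), hence $\big((1+e-\phi(x))_+^2-1\big)_+^{3/2}\leq (1+e-\phi(x))^3\leq |\phi(x)|^3$, and also this term is supported in $\{|\phi(x)|>|e|\}$; interpolating $\int_{\{|\phi|>|e|\}}|\phi|^3\,dx$ against the measure bound $\mathrm{meas}\{|\phi|>|e|\}\leq |e|^{-q}\|\phi\|_{L^q}^q$ (Chebyshev) yields a bound of the form $C|e|^{3-q}(1+|e|^{-3})\|\phi\|_{L^q}^q$ after splitting the integrand as $|\phi|^3=|\phi|^q\cdot|\phi|^{3-q}\leq |\phi|^q|e|^{3-q}$ on that set for $q>3$, plus the lower-order term; this gives the first inequality in \eqref{control-jac}. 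The bound on $a_\phi^{-1}$ is then obtained by inverting: writing $s=a_\phi(e)$ and using $a_\phi(e)\leq C|e|^{3-q}\|\phi\|_{L^q}^q$ for $|e|$ bounded (the $|e|^{-3}$ term) and the analogous clean estimate $a_\phi(e)\leq C|e|^{3-q}\|\phi\|^q_{L^q}$ controlling the two regimes, one solves for $|e|=|a_\phi^{-1}(s)|$ in terms of $s$, producing the two-term bound $\tilde C(\|\phi\|_{L^q}^{q/(q-3)}s^{-1/(q-3)}+\|\phi\|_{L^q}s^{-1/q})$.

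\textbf{Main obstacle.} The routine part is Steps 1–2; the delicate point is Step 3, specifically getting the \emph{sharp exponents} $q-3$ and $q$ in the two regimes (small $|e|$ versus $|e|$ away from $0$) and then correctly inverting the two-sided estimate to produce \eqref{control-jac} with the stated powers of $s$. Keeping track of which term dominates for small $s$ versus large $s$, and verifying that the constants can be taken uniform, is where the care is needed — everything else follows from dominated convergence and Chebyshev's inequality.
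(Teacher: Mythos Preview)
Your Steps 1 and 2 are essentially the paper's argument and are correct, with one minor imprecision: for the limit $e\uparrow 0$ it is not enough that the support $\{|\phi|>|e|\}$ grows --- you need the integral itself to diverge. The paper simply bounds $a_\phi(e)\geq\frac{4\pi}{3}\int\big((1+e+\tfrac{m(\phi)}{1+|x|})_+^2-1\big)_+^{3/2}dx$ and lets $e\to 0$; the limiting integrand decays only like $|x|^{-3/2}$, so the integral is infinite.

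In Step 3 there is a genuine error. The pointwise bound you assert, $1+e-\phi(x)\leq|\phi(x)|$ on the support, is \emph{false} for $-1<e<0$: on $\{\phi<e\}$ one has $1+e-\phi(x)=1-|e|+|\phi(x)|$, which exceeds $|\phi(x)|$ exactly when $|e|<1$. So your chain $((1+e-\phi)_+^2-1)_+^{3/2}\leq(1+e-\phi)^3\leq|\phi|^3$ breaks at the second inequality in precisely the regime that produces the extra factor $|e|^{-3}$ in \eqref{control-jac}. The paper sidesteps any such pointwise comparison and instead applies H\"older on $\{\phi<e\}$:
\[
a_\phi(e)\leq\big(\mathrm{meas}\{\phi<e\}\big)^{\frac{q-3}{q}}\Big(\int_{\phi<e}\big((1+e-\phi)^2-1\big)^{q/2}dx\Big)^{3/q},
\]
controls the first factor by Chebyshev ($\leq|e|^{-q}\|\phi\|_{L^q}^q$), and bounds the second via $((1+e-\phi)^2-1)^{q/2}\leq C(1+|\phi|^q)$ (this only needs $0<e-\phi\leq|\phi|$, which \emph{is} true), giving $\int_{\phi<e}\leq C\|\phi\|_{L^q}^q(1+|e|^{-q})$; combining the two and using $(1+t)^{3/q}\leq1+t^{3/q}$ yields \eqref{control-jac}. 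Your direct route can be salvaged: factor $((1+e-\phi)^2-1)_+=(e-\phi)(2+e-\phi)$ on the support, so the integrand is $\leq C(|\phi|^{3/2}+|\phi|^3)$, and then $\int_{\{|\phi|>|e|\}}|\phi|^\alpha\leq|e|^{\alpha-q}\|\phi\|_{L^q}^q$ for $\alpha\in\{3/2,3\}$ gives a bound at least as good as \eqref{control-jac}. The inversion to $a_\phi^{-1}$ is then routine, as you say.
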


Let prove these properties.

\begin{proof}[Proof of Lemma \ref{jacobian}] To prove {\it(i)}, remark that for $e<0$
$$\sqrt{|v|^2+1} -1 +\phi(x)<e \Leftrightarrow \left\{ \phi(x)<e \ \textrm{and } |v|< \left( (1+e-\phi(x))_+^2-1\right)_+^\frac{1}{2} \right\}. $$
Since $\dis \lim_{|x|\rightarrow +\infty}\phi=0$, the set $\left\{x\in \R^3\ : \  \phi(x)<e\right\}$ is bounded and since $\phi$ belongs to $L^q(\R^3)$ with $q>3$, we have $\phi \in L^3_{loc}(\R^3)$ which implies
$$x\mapsto \left( (1+e-\phi(x))_+^2-1\right)_+^\frac{1}{2} \ \textrm{ belongs to } L^3(\R^3).$$
Thus for all $e<0$, $a_\phi(e)$ is finite and after passing to the spherical coordinates in velocity we obtain the formula \eqref{formule-jac}.

\bs
\ni
{\em Proof of (ii)}. We define $g(e,x):=\left( (1+e-\phi(x))_+^2-1\right)_+^\frac{3}{2}$. Then, for $e<e_0<0$,
$$0\leq \frac{\partial  g}{\partial e} (e,x)\leq 3\left( (1+e_0-\phi(x))_+^2-1\right)_+^\frac{1}{2}(1+e_0-\phi(x)),$$
which, as function of the variable $x$, belongs to $L^1(\R^3)$. Indeed its support is included in the bounded set $\left\{x\in \R^3\ : \  \phi(x)<e_0\right\}$ and $\phi \in L^2_{loc}(\R^3)$. Hence we may apply the dominated convergence theorem and get that $a_\phi$ is a $\mathcal{C}^1$ function on $\R_-^\ast$, nul on $(-\infty,\inf \phi]$ (if $\inf \phi$ is finite) and strictly increasing on $(\inf \phi, 0)$.\\
We now look for the limit of $a_\phi(e)$ when $e\rightarrow 0$. Since $\phi \in \Phi_q$,
$$ a_\phi(e)\geq \frac{4\pi}{3} \int_{\R^3} \left( \left(1+e+\frac{m(\phi)}{1+|x|}\right)_+^2-1\right)_+^\frac{3}{2} dx \rightarrow +\infty \ \textrm{as} \ e\rightarrow 0.$$
To conclude the proof of {\it(ii)}, let us study the behavior of $a_\phi(e)$ as $e\rightarrow -\infty$ in the case $\inf \phi=-\infty$. We observe, from Hölder inequality, that
 \be a_\phi(e) \leq  \left( \textrm{meas}\{x\in\R^3\ :\ \phi(x)<e\}   \right)^\frac{q-3}{q} \left( \int_{\phi(x)<e} \left( \left(1+e-\phi(x)\right)^2-1\right)^\frac{q}{2} dx\right)^\frac{3}{q},\label{ineg-separe}\ee
where both terms can be controled. The first term satisfies
\be \textrm{meas}\{x\in\R^3\ :\ \phi(x)<e\} \leq \int_{\phi(x)<e} \left( \frac{\phi(x)}{e}\right)^q dx \leq \frac{\| \phi\|_{L^q}^q}{|e|^q},\label{term1-ineg}\ee
and the second term
$$\int_{\phi(x)<e} \left( \left(1+e-\phi(x)\right)^2-1\right)^\frac{q}{2} dx \leq \int_{\phi(x)<e} C \left(1+\left|\phi(x)\right|^q\right) dx \leq C\left(\frac{\| \phi\|_{L^q}^q}{|e|^q}+\| \phi\|^q_{L^q} \right),$$
where we use the inequality \eqref{term1-ineg}. Now, injecting these both inequalities in \eqref{ineg-separe}, we conclude that there exists a constant $C>0$ such that for all $e<0$
\be a_\phi(e) \leq  \frac{C}{|e|^{q-3}}\left(1+\frac{1}{|e|^3} \right)\| \phi\|^q_{L^q}. \ee
Finally the inverse $a_\phi^{-1}$ satisfies for all $s >0$
\be a_\phi^{-1}(s)\geq -\tilde{C}\left( \frac{\dis \| \phi\|^\frac{q}{q-3}_{L^q} }{\dis s^\frac{1}{q-3}}+ \frac{\dis \| \phi\|_{L^q} }{\dis s^\frac{1}{q}} \right),\ee
and the properties {\it(ii)} are proved.
\end{proof}
The first lemma \ref{jacobian} gives first properties about the jacobian $a_\phi$ at fixed $\phi\in \Phi_q$. The regularity of the jacobian with respect to the potential $\phi$ is studied in the following lemma.

\begin{lemma}[Regularity of the Jacobian $a_\phi$ with respect to $\phi$]\label{jac-deriv} Let $q>3$. Then

(iii) Let $(\phi_n)$, $(e_n)$ and $(s_n)$ sequences of respectively $\Phi_q$, $\R_-^\ast$ and $R_+^\ast$. Assume that there exist $\phi\in \Phi_q$, $e\in \RR_-\cup\{-\infty\}$ and $s\in \RR_+^\ast\cup \{+\infty\}$ such that
$$\phi_n \rightarrow \phi \mbox{ in } L^q(\RR^3), \ e_n \rightarrow e \mbox{  and  } s_n \rightarrow s.$$ 
Then by denoting $a_\phi(-\infty)=0$, $a_\phi(0)=+\infty$ and $a_\phi^{-1}(+\infty)=0$, we have
\be a_{\phi_n}(e_n) \rightarrow a_\phi(e) \mbox{ and } a_{\phi_n}^{-1}(s_n) \rightarrow a_\phi^{-1}(s). \label{conv-aa-1}\ee

(iv) Let $\phi, \tilde{\phi} \in \Phi_q$ and let $h=\phi -\tilde{\phi}$. Then the function $(\lambda,e)\mapsto a_{\phi+\lambda h} (e)$ is a $\mathcal{C}^1$ function on $[0,1]\times \R_-^\ast$ and we have 
\be \frac{\partial}{\partial \lambda} a_{\phi+\lambda h}(e)=-4\pi \int_{\R^3} K \left(e-\phi(x)-\lambda h(x)\right)h(x)dx. \label{formule-deriv-jac} \ee
where the function $K$, defined by
$$K(\eta)= \left( \left(1+\eta \right)_+^2-1\right)_+^\frac{1}{2} (1+\eta),$$
is non decreasing and has its support in $\RR_+^\ast$.

(v) With the same notation as {\it (iv)}. Let $s\in \R_+^\ast$. Then the function $\lambda\mapsto a_{\phi+\lambda h}^{-1} (s)$ is a $\mathcal{C}^1$ function on $[0,1]$ and we have 
\be \frac{\partial}{\partial \lambda} a_{\phi+\lambda h}^{-1}(s)  =  \frac{\dis  \int_{\R^3} K\left( a_{\phi+\lambda h}^{-1}(s)-\phi(x)-\lambda h(x) \right) h(x) dx }{ \dis \int_{\R^3}K\left( a_{\phi+\lambda h}^{-1}(s)-\phi(x)-\lambda h(x)\right)dx     }.\label{formule-deriv-jac-1} \ee

\end{lemma}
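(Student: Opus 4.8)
All three statements are consequences of the explicit formula \eqref{formule-jac}, which writes $a_\phi(e)$ as the integral of the nonnegative function $x\mapsto\tfrac{4\pi}{3}\bigl((1+e-\phi(x))_+^2-1\bigr)_+^{3/2}$, depending on $\phi$ only through this integrand.

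For \emph{(iii)} the plan is to argue by extraction. Given any subsequence of $(\phi_n)$, extract a further one along which $\phi_n\to\phi$ a.e.\ in $\R^3$ (possible since $\phi_n\to\phi$ in $L^q$) and along which $|\phi_n|$ is bounded by a fixed $L^q$ function; then the integrand of \eqref{formule-jac} at $(e_n,\phi_n)$ converges a.e.\ to the one at $(e,\phi)$. If $e\in\R_-^\ast$, then $e_n\le e/2<0$ for $n$ large, so the integrand is supported in the fixed finite-measure set $\{|\phi|>|e|/4\}$ (using the $L^q$ domination), on which the elementary bound $\bigl((1+e_n-\phi_n)^2-1\bigr)^{3/2}\le C(1+|\phi_n|^q)$ provides an $L^1$ majorant; dominated convergence then gives $a_{\phi_n}(e_n)\to a_\phi(e)$. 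The two boundary cases are handled separately: if $e=-\infty$, the estimate \eqref{control-jac} together with the boundedness of $\|\phi_n\|_{L^q}$ forces $a_{\phi_n}(e_n)\to0=a_\phi(-\infty)$; if $e=0$, Fatou's lemma gives $\liminf_n a_{\phi_n}(e_n)\ge\tfrac{4\pi}{3}\int\bigl((1-\phi(x))^2-1\bigr)_+^{3/2}dx=a_\phi(0)=+\infty$, where the last equality is the computation in Lemma \ref{jacobian}(ii) using $m(\phi)>0$. Since every subsequence has a further subsequence realizing the claimed limit, the full sequence converges. For the inverses, set $e_n^\ast=a_{\phi_n}^{-1}(s_n)\in(\inf\phi_n,0)$; any subsequence has a further one with $e_{n_k}^\ast\to\bar e\in[-\infty,0]$, and the convergence just proved yields $s_{n_k}=a_{\phi_{n_k}}(e_{n_k}^\ast)\to a_\phi(\bar e)$, whence $a_\phi(\bar e)=s$ and therefore $\bar e=a_\phi^{-1}(s)$ under the stated conventions; hence $a_{\phi_n}^{-1}(s_n)\to a_\phi^{-1}(s)$.

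For \emph{(iv)} I would differentiate \eqref{formule-jac} under the integral sign. The map $t\mapsto\bigl((t_+^2-1)_+\bigr)^{3/2}$ is $C^1$ on $\R$ with derivative $3K(t-1)$, which gives formally $\partial_\lambda a_{\phi+\lambda h}(e)=-4\pi\int_{\R^3}K(e-\phi(x)-\lambda h(x))h(x)\,dx$. To justify this and the joint $C^1$ regularity in $(\lambda,e)$, fix $e\le e_0<0$ and $\lambda\in[0,1]$: along the segment $\phi_\lambda:=\phi+\lambda h$ one has a pointwise bound $|\phi_\lambda|\le C(|\phi|+|\tilde\phi|)$ and a uniform lower bound in the sense of \eqref{def-m}, so the $x$-support of the integrand is contained in the fixed finite-measure set $\{C(|\phi|+|\tilde\phi|)>|e_0|\}$; on that set, monotonicity of $K$ yields the $L^1$ majorant $C(1+|\phi|+|\tilde\phi|)^2(|\phi|+|\tilde\phi|)$ (integrable since $q>3$). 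Leibniz's rule then applies, and dominated convergence gives joint continuity of $\partial_\lambda a_{\phi+\lambda h}(e)$ and of $\partial_e a_{\phi+\lambda h}(e)=4\pi\int K(e-\phi-\lambda h)\,dx$, so $a$ is $C^1$ on $[0,1]\times\R_-^\ast$. The stated properties of $K$ follow from its closed form, $K(\eta)=\sqrt{\eta(\eta+2)}\,(1+\eta)$ for $\eta>0$ and $K\equiv0$ for $\eta\le0$.

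For \emph{(v)} I would apply the implicit function theorem to $G(\lambda,e)=a_{\phi+\lambda h}(e)-s$ on $[0,1]\times\R_-^\ast$. By (iv), $G$ is $C^1$, and $\partial_e G(\lambda,e)=4\pi\int K(e-\phi-\lambda h)\,dx>0$ whenever $a_{\phi+\lambda h}(e)=s>0$, since then $\{x:\phi+\lambda h<e\}$ has positive measure and $K>0$ there. Since $\lambda\mapsto a_{\phi+\lambda h}^{-1}(s)$ is continuous by (iii) and $\R_-^\ast$-valued, its range over the compact $[0,1]$ is a compact subset of $\R_-^\ast$, so the implicit function theorem applies and gives that $\lambda\mapsto a_{\phi+\lambda h}^{-1}(s)$ is $C^1$ with $\partial_\lambda a_{\phi+\lambda h}^{-1}(s)=-\partial_\lambda G/\partial_e G$, which is precisely \eqref{formule-deriv-jac-1}.

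The main obstacle I expect is part (iii): producing a domination uniform in $n$ while $e_n$ and $\phi_n$ both vary, and correctly treating the degenerate limits $e\in\{0,-\infty\}$ and $s=+\infty$. Once (iii) and the explicit formula \eqref{formule-jac} are in hand, parts (iv) and (v) are routine differentiation-under-the-integral and implicit-function arguments.
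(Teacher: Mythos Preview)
Your plan is correct and follows the paper's strategy closely, with two local streamlinings worth flagging. For the boundary case $e=0$ in (iii) you invoke Fatou's lemma on the nonnegative integrand along the a.e.-convergent subsequence; the paper instead argues by hand, fixing $M>0$ and $e_0<0$ with $\tfrac{4\pi}{3}\int\bigl((1+e_0+\tfrac{m(\phi)}{2(1+|x|)})_+^2-1\bigr)_+^{3/2}dx>2M$ and then showing that the exceptional set $\Omega_n=\{|\phi_n|<\tfrac{m(\phi)}{2(1+|x|)}\}$ has vanishing measure inside any fixed ball (since on $\Omega_n$ one has $|\phi_n-\phi|\ge\tfrac{m(\phi)}{2(1+|x|)}$ while $\phi_n\to\phi$ in $L^q$). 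Your Fatou argument is shorter and bypasses this construction. For (v) you appeal to the implicit function theorem on $G(\lambda,e)=a_{\phi+\lambda h}(e)-s$, whereas the paper computes the difference quotient explicitly, writing it as the product of two quotients tending respectively to $1/a'_{\phi_0}(a_{\phi_0}^{-1}(s))$ and $-\partial_\lambda a_{\phi_\lambda}\big|_{\lambda_0}(a_{\phi_0}^{-1}(s))$; the two routes are of course equivalent. One small correction in your domination for (iii): the fixed finite-measure support set and the $L^1$ majorant should be written in terms of the $L^q$ dominating function $g$ supplied by your subsequence extraction (i.e.\ $\{g>|e|/2\}$ and $C(1+g^3)$ on that set), not in terms of $|\phi|$ or $|\phi_n|$, since it is $|\phi_n|\le g$ that gives the uniform containment $\{\phi_n<e_n\}\subset\{g>|e|/2\}$.
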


\begin{proof}
\ni
{\em Proof of (iii):} From the control of the jacobian \eqref{control-jac}, we have directly for $e=-\infty$,
$$ a_{\phi_n}(e_n) \rightarrow 0=a_\phi(-\infty) \ \textrm{ as } n\rightarrow +\infty. $$
Let us now treat the case $e<0$ such that $e\neq -\infty$. Up to a subsequence, we have the convergence
$$\left( \left(1+e_n+\phi_n(x) \right)_+^2-1\right)_+^\frac{3}{2} \rightarrow \left( \left(1+e+\phi (x) \right)_+^2-1\right)_+^\frac{3}{2}$$
almost everywhere in  $\RR^3$. To obtain the convergence in $L^1(\RR^3)$, from a generalized dominated convergence theorem, we have just to prove the following $L^1$-convergence,
$$g_n:=\left( \left(1+e_0+\phi_n \right)_+^2-1\right)_+^\frac{3}{2} \rightarrow \left( \left(1+e_0+\phi \right)_+^2-1\right)_+^\frac{3}{2}=: g$$
where $\dis e_0=\inf_{n\in \NN} e_n <0$. In order to do it, we define the set 
$$\Omega=\left\{x\in\RR^3, \ \phi(x)<\frac{e_0}{2}\right\}.$$
Since $\Omega$ is included in a compact set, the convergence of $g_n$ to $g$ in $L^1(\Omega)$ comes from the convergence of $\phi_n$ to $\phi$ in $L^3_{loc}(\RR^3)$. Out of $\Omega$ we have $g(x)=0$ and, from a similar Hölder inequality as \eqref{ineg-separe}, we get
$$\begin{array}{rcl}
\dis \int_{\RR^3 \backslash \Omega } g_n(x)dx  
& \leq & \dis C \left(\frac{\| \phi_n\|_{L^q}^q}{|e|^q}+\| \phi_n\|^q_{L^q} \right)^\frac{3}{q} \left( \textrm{meas}\{x\in\R^3 : \phi_n(x)<e_0, \phi(x) \geq \frac{e_0}{2} \}   \right)^\frac{q-3}{q} \\ \\
& \leq &\dis \bar{C} \left( \textrm{meas}\{x\in\R^3 :\ \left| \phi_n(x) - \phi (x)\right| \geq \frac{e_0}{2} \}   \right)^\frac{q-3}{q} \\ & & \\
& \leq & \dis \bar{C} \left( \frac{2 \|\phi_n-\phi \|_{L^q}}{e_0} \right)^{q-3}.
\end{array}$$
Thus the sequence $g_n$ converges to $g$ in $L^1(\RR^3\backslash \Omega)$ and finally in $L^1(\RR^3)$. By consequence, we have the convergence $a_{\phi_n}(e_n) \rightarrow a_{\phi}(e)$ for $e\in ]-\infty, 0[$.

Let us now treat the case $e=0$. Let $M>0$ be an arbitrary constant. Denote for all $n\in\NN$ the space $\Omega_n= \left\{ x\in \RR^3, \ |\phi_n(x)| < \frac{m(\phi)}{2(1+|x|)} \right\}$ and let $e_0<0$ such that
$$\frac{4\pi}{3} \int_{\R^3} \left( \left(1+e_0+\frac{m(\phi)}{2(1+|x|)}\right)_+^2-1\right)_+^\frac{3}{2} dx >2M. $$
Remark that this integral is well defined since the inner function is zero out of $B(0,R)$ for 
$$R=\max\left\{0,\frac{m(\phi)}{2|e_0|}-1\right\}.$$
For $n$ large enough, we have $e_n>e_0$ and thus
$$\begin{array} {rcl}
 \dis a_{\phi_n}(e_n) & \geq & \dis \frac{4\pi}{3} \int_{\R^3\backslash \Omega_n} \left( \left(1+e_0+\frac{m(\phi)}{2(1+|x|)}\right)_+^2-1\right)_+^\frac{3}{2} dx \\
      & \geq & \dis 2M - \frac{4\pi}{3} \int_{\Omega_n} \left( \left(1+e_0+\frac{m(\phi)}{2(1+|x|)}\right)_+^2-1\right)_+^\frac{3}{2} dx
\end{array}$$      
To prove that the second term converges to $0$ as $n\rightarrow +\infty$, we remark that the set of integration of this term is $\Omega_n \cap B(0,R)$. Now, from the definitions of $m(\phi)$ and $\Omega_n$, we have for all $x\in\Omega_n$
$$\phi_n(x)-\phi(x)\geq -\frac{m(\phi)}{2(1+|x|)}+\frac{m(\phi)}{1+|x|} \geq \frac{m(\phi)}{2(1+|x|)},$$ 
and thus
$$\|\phi_n-\phi \|^q_{L^q(\RR^3)} \geq \int_{\Omega_n} \left( \frac{m(\phi)}{2(1+|x|)} \right)^q dx \geq \int_{\Omega_n\cap B(0,R)} \left( \frac{m(\phi)}{2(1+|x|)} \right)^q dx. $$
Since $\phi_n$ converges to $\phi$ in $L^q(\RR^3)$, we deduce that the measure of the set $\Omega_n\cap B(0,R)$ converges to $0$, which implies that the integral
$$ \int_{\Omega_n} \left( \left(1+e_0+\frac{m(\phi)}{2(1+|x|)}\right)_+^2-1\right)_+^\frac{3}{2} dx$$
converges to $0$ as $n\rightarrow +\infty$. Hence for $n$ large enough $ a_{\phi_n}(e_n)\geq M$, which concludes the proof of the convergence of $ a_{\phi_n}(e_n)$. 

To prove that $a_{\phi_n}^{-1}(s_n)\rightarrow a_{\phi}^{-1}(s)$, we denote $e_n:= a_{\phi_n}^{-1}(s_n)$. We know from the above result that, if $e_n$ converges to $e\in [0,+\infty]$, then 
$$s_n= a_{\phi_n}(e_n) \rightarrow  a_{\phi}(e).$$ 
Hence, any subsequence of $(e_n)_{n\in\NN}$ converges to $e= a_{\phi}^{-1}(s)$, which gives the convergence of the whole sequence $(e_n)_{n\in\NN}$ to $e= a_{\phi}^{-1}(s)$. The proof of {\it(iii)} is complete.

\bs
\ni
{\em Proof of (iv)}: We recall that $h=\tilde{\phi}-\phi$ with $\phi,\tilde{\phi}\in\Phi_q$ and that $\lambda\in[0,1]$. From the convexity of $\Phi_q$, the function $\phi+\lambda h$ belongs to $\Phi_q$ and $a_{\phi +\lambda h}$ is well-defined. For $e<0$ fixed, we aim to differentiate on $(0,1)$ the function 
$$\lambda \mapsto a_{\phi+\lambda h} (e)=\frac{4\pi}{3} \int_{\R^3} \left( (1+e-\phi(x)-\lambda h(x))_+^2-1\right)_+^\frac{3}{2} dx.$$
First, the set of integration satisfies 
$$\left\{ x\in \R^3\ :\ (\phi +\lambda h)(x)<e\right\} \subset \left\{ x\in \R^3\ :\ \phi (x)<e\right\} \cup \left\{ x\in \R^3\ :\ \tilde{\phi}(x)<e\right\},$$ 
which is included in a compact set of $\R^3$. Now, we have for all $x$ in this set
$$\frac{\partial}{\partial \lambda} \left[\left( (1+e-\phi(x)-\lambda h(x))_+^2-1\right)_+^\frac{3}{2}\right]=3 K\left( e-\phi(x)-\lambda h(x)\right) h(x),$$
where $K(\eta)=\left( \left(1+\eta \right)_+^2-1\right)_+^\frac{1}{2} (1+\eta)$. This derivative can be bound uniformly with respect to $\lambda$ by
$$\begin{array}{rcl} 
\dis K\left( e-\phi(x)-\lambda h(x)\right) \left| h(x) \right| & \leq & \dis K\left( e-\phi(x)-\tilde{\phi}(x)\right)|\phi(x)+\tilde{\phi}(x)|   \\ \\
& \leq &\dis C\left(1+ \left| \phi(x)-\tilde{\phi}(x)\right|^3\right),
 \end{array}$$
which, combined with the fact that $\phi+\tilde{\phi} \in L^3_{loc}(\R^3)$ allows the Lebesgue dominated convergence theorem and thus implies {\it(iv)}.

\bs
\ni
{\em Proof of (v):} Let $s\in \R^\ast_+$. The continuity of $\lambda \mapsto  a_{\phi+\lambda h}^{-1}(s)$ comes directly from {\it(iii)}. Let us differentiate this function on $\lambda_0 \in(0,1)$. Denote $\phi_0=\phi+\lambda_0 h$ and $\phi_\lambda=\phi+\lambda h$ with $\lambda\neq \lambda_0$. Then we rewrite
$$\frac{ a_{\phi_\lambda}^{-1}(s)- a_{\phi_0}^{-1}(s)}{ \lambda-\lambda_0 }=\frac{ a_{\phi_\lambda}^{-1}(s)- a_{\phi_0}^{-1}(s)}{a_{\phi_0}(a_{\phi_\lambda}^{-1}(s))-a_{\phi_0}(a_{\phi_0}^{-1}(s))}   \frac{ a_{\phi_0}(a_{\phi_\lambda}^{-1}(s))-a_{\phi_\lambda}(a_{\phi_\lambda}^{-1}(s))}{ \lambda-\lambda_0 }.$$
Since $a_{\phi_\lambda}^{-1}(s)$ converges to $a_{\phi_0}^{-1}(s)$ as $\lambda\rightarrow \lambda_0$, the first term satisfies 
$$\frac{ a_{\phi_\lambda}^{-1}(s)- a_{\phi_0}^{-1}(s)}{a_{\phi_0}(a_{\phi_\lambda}^{-1}(s))-a_{\phi_0}(a_{\phi_0}^{-1}(s))} \rightarrow \frac{1}{a_{\phi_0}'(a_{\phi_0}^{-1}(s))} \ \textrm{ as } \lambda\rightarrow \lambda_0,$$
and the second term satisfies 
$$ \frac{ a_{\phi_0}(a_{\phi_\lambda}^{-1}(s))-a_{\phi_\lambda}(a_{\phi_\lambda}^{-1}(s))}{ \lambda-\lambda_0 }\rightarrow \left.\frac{\partial}{\partial \lambda} a_{\phi+\lambda h}\left(a_{\phi_0}^{-1}(s)\right) \right|_{\lambda=\lambda_0}.$$
Using \eqref{formule-deriv-jac}, we finally get the expression \eqref{formule-deriv-jac-1}.

\end{proof}

\begin{lemma}[Changes of variables]\label{chgmt} Let $\alpha \in \mathcal{C}^0(\R)\cap L^\infty(\R)$, $G \in L^1(\R_+)$ and for all $(x,v)\in\R^6$
$$e(x,v)=\sqrt{|v|^2+1} -1 +\phi(x).$$
Then
\be \begin{array} {rcl}
 \dis \int_{e(x,v)<0} \alpha \left( e(x,v) \right) G \left( a_\phi \left( e(x,v) \right) \right) dx dv  & = & \dis \int_{\inf \phi}^0 \alpha(e) G (a_\phi(e)) a_\phi'(e) de \\ \\ \dis 
 & = & \dis  \int_0^{+\infty} \alpha( a_\phi^{-1}(s))G(s)ds,
 \end{array} \label{chg-var}\ee
 where $\inf \phi$ is the essential infimum of $\phi$.
\end{lemma}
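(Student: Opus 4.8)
The plan is to establish the two equalities in turn; the second is only a one--dimensional substitution, so the substance lies in the first identity, which is a coarea--type change of variables: integrating a function of the microscopic energy $e(x,v)$ over $\R^6$ amounts to integrating against the pushforward of Lebesgue measure by the map $(x,v)\mapsto e(x,v)$, and by the very definition of $a_\phi$ together with the regularity in Lemma~\ref{jacobian}\,(ii) this pushforward has density $a_\phi'$ on $(-\infty,0)$.

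First I would prove the first equality by an explicit computation, paralleling the proof of Lemma~\ref{jacobian}. Fix $x\in\R^3$ with $\phi(x)<0$ and pass to spherical coordinates in $v$; with $r=|v|$ the substitution $e=\sqrt{1+r^2}-1+\phi(x)$ is an increasing $\mathcal{C}^1$ bijection of $(0,+\infty)$ onto $(\phi(x),0)$ with inverse $r=\big((1+e-\phi(x))^2-1\big)^{1/2}$, and a direct computation gives $r^2\,dr=K(e-\phi(x))\,de$ with $K(\eta)=\big((1+\eta)_+^2-1\big)_+^{1/2}(1+\eta)$ as in the paper. Hence, for each such $x$,
$$\int_{\R^3}\mathbf{1}_{e(x,v)<0}\,\alpha(e(x,v))\,G(a_\phi(e(x,v)))\,dv=4\pi\int_{\phi(x)}^{0}\alpha(e)\,G(a_\phi(e))\,K(e-\phi(x))\,de .$$
Running the same identity with $|\alpha|$ and $|G|$ in place of $\alpha,G$ and applying Tonelli's theorem shows that the double integral over $\R^6$ of $|\alpha(e(x,v))|\,|G(a_\phi(e(x,v)))|$ is finite (it comes out equal to $\int_0^{+\infty}|\alpha(a_\phi^{-1}(s))|\,|G(s)|\,ds\le\|\alpha\|_{L^\infty}\|G\|_{L^1}$ after the recognition step below), so Fubini's theorem lets me interchange the $x$-- and $e$--integrations. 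It then remains to recognise
$$4\pi\int_{\R^3}K(e-\phi(x))\,\mathbf{1}_{\phi(x)<e}\,dx=a_\phi'(e)\qquad(e<0),$$
which is obtained by differentiating the explicit formula \eqref{formule-jac} under the integral sign: the identity $\frac{\partial}{\partial e}\big((1+e-\phi(x))_+^2-1\big)_+^{3/2}=3\,K(e-\phi(x))$ and the domination needed to differentiate are exactly those used in the proof of Lemma~\ref{jacobian}\,(ii) (compare also Lemma~\ref{jac-deriv}\,(iv)). Combining the last three displays yields the first equality.

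For the second equality I would invoke Lemma~\ref{jacobian}\,(ii): $a_\phi$ is a strictly increasing $\mathcal{C}^1$ diffeomorphism of $(\inf\phi,0)$ onto $(0,+\infty)$, and $a_\phi\equiv0$ on $(-\infty,\inf\phi]$ so that $a_\phi'\equiv0$ there and $\int_{-\infty}^0=\int_{\inf\phi}^0$. The change of variable $s=a_\phi(e)$, $ds=a_\phi'(e)\,de$, is valid without any integrability assumption for the nonnegative integrand $|\alpha(e)|\,|G(a_\phi(e))|\,a_\phi'(e)$ (this both identifies the bound used above and shows the signed integrand lies in $L^1$), and applying it to $\alpha(e)\,G(a_\phi(e))\,a_\phi'(e)$ produces $\int_0^{+\infty}\alpha(a_\phi^{-1}(s))\,G(s)\,ds$, which is the claim.

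I expect no serious obstacle here; the only care needed is the two standard measure--theoretic justifications above — the absolute integrability that licenses Fubini, and the differentiation under the integral sign defining $a_\phi'$ — both of which are already contained in the earlier lemmas. As an alternative to the explicit computation, one can argue abstractly: $e\mapsto a_\phi(e)$ is by definition the distribution function of $(x,v)\mapsto e(x,v)$ on $(-\infty,0)$ and is $\mathcal{C}^1$ there by Lemma~\ref{jacobian}\,(ii), hence the pushforward of Lebesgue measure restricted to $\{e(x,v)<0\}$ equals $a_\phi'(e)\,de$, and the general pushforward change--of--variables theorem gives both equalities at once.
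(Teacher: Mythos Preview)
Your proof is correct and follows essentially the same route as the paper: spherical coordinates in $v$, the substitution $e=\sqrt{1+r^2}-1+\phi(x)$, recognition of $4\pi\int_{\R^3}K(e-\phi(x))\,dx$ as $a_\phi'(e)$, and then the one-dimensional diffeomorphism $s=a_\phi(e)$. You supply more detail than the paper (the Tonelli/Fubini justification and the explicit link to the differentiation in Lemma~\ref{jacobian}\,(ii)), and your alternative pushforward argument is a clean abstract substitute. One small slip: the map $r\mapsto\sqrt{1+r^2}-1+\phi(x)$ is a bijection of $(0,+\infty)$ onto $(\phi(x),+\infty)$, not onto $(\phi(x),0)$; the restriction to $e<0$ comes from the indicator $\mathbf{1}_{e(x,v)<0}$, which cuts the $r$-range to $\big(0,((1-\phi(x))^2-1)^{1/2}\big)$. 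Your displayed formula is nonetheless correct.
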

\begin{proof}
We perform the change of variable $e=\sqrt{|v|^2+1} -1 +\phi(x),$ with respect to the velocity variable $v$ to get \\

$\dis \int_{e(x,v)<0} \alpha \left( e(x,v) \right) G \left( a_\phi \left( e(x,v) \right) \right) dx dv  $
 $$ \begin{array}{cl}
 = &\dis 4 \pi \int_{\R^3} dx \int_{\phi(x)}^0 \alpha(e) G (a_\phi(e)) \left( (1+e-\phi(x))_+^2-1\right)_+^\frac{1}{2}(1+e-\phi(x))de \\ \\
 = & \dis \int_{\inf \phi}^0 \alpha(e) G (a_\phi(e)) a_\phi'(e) de.
 \end{array}$$
 We have then directly \eqref{chg-var} since $a_\phi$ is a $\mathcal{C}^1$-diffeomorphism from $(\inf \phi,0)$ onto $\RR_+^\ast$.
\end{proof}

\section{Rearrangement with respect to the microscopic energy}\label{app-rearr}

We use now this jacobian to define a new rearrangement of any $f\in\mathcal{E}_p$ with respect to the microsopic energy $\sqrt{|v|^2+1} -1 +\phi(x)$, where $\phi$ belongs to $\Phi_q$ given by \eqref{def-phiq}. We first recall some basic properties of the classical Schwarz symmetrization.

\begin{lemma}[Schwarz symmetrization] Let $f\in \mathcal{E}_p$, nonzero, with $p>\frac{3}{2}$. We define the Schwarz symmetrization $f^\ast$ of $f$ on $\RR_+^\ast$ by 
$$\forall t>0, \ \ f^\ast (t)=\inf\{ s\geq 0, \ \ \mu_f(s)\geq t \},$$
where $\mu_f$ is the distribution function of $f$ defined by \eqref{def-mu}. Then $f^\ast$ is the unique nonincreasing function on $\RR_+^\ast$ such that $f$ and $f^\ast$ have the same distribution function
$$\forall s\geq 0, \mu_f(s)=\mu_{f^\ast}(s).$$
Moreover, if $f$ is continuous then $f^\ast$ is continuous. In particular, $Q^\ast$ is continuous. 
\end{lemma}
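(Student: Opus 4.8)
The plan is to recall the standard construction of the one–dimensional decreasing rearrangement and then to isolate the one point genuinely specific to this statement, the continuity of $f^\ast$.

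\emph{Classical part.} For $f\in\mathcal E_p$ (only $f\ge 0$ and $f\in L^1(\RR^6)$ are used) Chebyshev's inequality gives $\mu_f(s)\le\|f\|_{L^1}/s<+\infty$ for every $s>0$; moreover $\mu_f$ is nonincreasing, tends to $0$ at $+\infty$, and is right–continuous because $\{f>s\}=\bigcup_n\{f>s+1/n\}$ and Lebesgue measure is continuous from below. The function $f^\ast$ of the statement is exactly the generalized (decreasing) inverse of $\mu_f$; from this one reads off that $f^\ast$ is nonincreasing, finite on $\RR_+^\ast$ (since $\mu_f(s)\to 0$, for each $t>0$ there is $s$ with $\mu_f(s)<t$, so $f^\ast(t)\le s$), and that it obeys the layer–cake duality $f^\ast(t)>s\iff\mu_f(s)>t$. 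Hence $\{t>0:\ f^\ast(t)>s\}$ is an interval of length $\mu_f(s)$, i.e.\ $\mu_{f^\ast}=\mu_f$, which is the equimeasurability. For uniqueness: any nonincreasing $g$ on $\RR_+^\ast$ has $\{t:\ g(t)>s\}$ equal to an interval of length $\mu_g(s)$, so two nonincreasing functions with the same distribution function agree almost everywhere, hence everywhere once one normalizes the value at the (at most countably many) jump points, e.g.\ by right–continuity, which $f^\ast$ enjoys. I would simply cite a standard reference for these facts.

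\emph{Continuity.} Suppose $f$ is continuous and, for contradiction, that the nonincreasing function $f^\ast$ has a downward jump at some $t_0\in\RR_+^\ast$, from $b:=f^\ast(t_0^-)$ to $a:=f^\ast(t_0^+)$, with $0\le a<b<+\infty$ (finiteness of $b$ because $f^\ast\le f^\ast(t_0/2)<+\infty$ near $t_0$). Using the duality together with the monotonicity of $f^\ast$, for every $s\in(a,b)$ one gets $\mu_f(s)\ge t_0$ (from $f^\ast(t)\ge b>s$ for all $t<t_0$) and $\mu_f(s)\le t_0$ (from $f^\ast(t)\le a<s$ for all $t>t_0$), hence $\mu_f\equiv t_0$ on $(a,b)$. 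Consequently $\mathrm{meas}\{a<f<b\}=0$, because for $a<s_1\le s_2<b$ one has $\mathrm{meas}\{s_1<f\le s_2\}=\mu_f(s_1)-\mu_f(s_2)=0$ and $\{a<f<b\}$ is a countable union of such sets. Since $f$ is continuous, $\{a<f<b\}$ is open, and an open subset of $\RR^6$ of measure zero is empty, so $f$ takes no value in $(a,b)$. On the other hand $\mathrm{meas}\{f\ge b\}=\lim_{s\uparrow b}\mu_f(s)=t_0>0$, so $\{f\ge b\}\ne\emptyset$, while $\mathrm{meas}\{f>a\}=t_0<+\infty$ forces $\mathrm{meas}\{f\le a\}=+\infty$, so $\{f\le a\}\ne\emptyset$. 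As $\RR^6$ is connected and $f$ continuous, the intermediate value theorem produces a point at which $f$ takes a value in $(a,b)$, a contradiction. Hence $f^\ast$ has no jump, i.e.\ it is continuous on $\RR_+^\ast$; applying this to the continuous function $Q$ gives the continuity of $Q^\ast$.

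\emph{Main obstacle.} The argument is soft, and I do not expect a serious difficulty. The only care required is the bookkeeping of strict versus non–strict inequalities in the duality $f^\ast(t)>s\iff\mu_f(s)>t$ and in the passage from a jump of $f^\ast$ to a flat piece of $\mu_f$, where the right–continuity of $\mu_f$ and the one–sided limits of the monotone function $f^\ast$ must be invoked consistently; once this is in place, the topological step (an open null set is empty, plus connectedness of $\RR^6$) closes the proof at once.
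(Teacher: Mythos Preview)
Your proof is correct. The paper, however, gives no proof at all for this lemma: it is stated as a standard fact about the decreasing rearrangement, with the continuity claim presumably intended to be read off from the classical theory (e.g.\ a jump of $f^\ast$ corresponds to a level set of $f$ of positive measure, which a continuous $f$ on $\RR^6$ cannot have without the corresponding open preimage being nonempty). Your argument fills in exactly this gap, via the clean route ``jump of $f^\ast$ $\Rightarrow$ flat piece of $\mu_f$ $\Rightarrow$ open null preimage $\Rightarrow$ disconnection of $\RR^6$'', and your bookkeeping of the duality $f^\ast(t)>s\iff\mu_f(s)>t$ and of the one-sided limits is accurate. So there is nothing to compare: you have supplied a valid proof where the paper simply appeals to the classical construction.
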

Now, from this Schwarz symmetrisation, we define a new rearrangement with respect to the microscopic energy.
\begin{lemma}[Symmetric rearrangement of $f$ with respect to the microscopic energy]\label{lem-rearrang} Let $f\in \mathcal{E}_p$, nonzero, with $p>\frac{3}{2}$ and $\phi\in \Phi_q$. Let $f^\ast$ be the Schwarz rearrangement of $f$ in $\R^6$. We recall that the function $f^{\ast\phi}$ is defined by
\be f^{\ast \phi}(x,v)= \left\{ \begin{array} {lcl}
\dis f^\ast \left( a_\phi \left( \sqrt{|v|^2+1} -1 +\phi(x)\right) \right) & \textrm{if} & \dis  \sqrt{|v|^2+1} -1 +\phi(x)<0, \\ \\
0 & \textrm{if} & \dis \sqrt{|v|^2+1} -1 +\phi(x)\geq 0.
\end{array}\right. \ee
Then,

(i) $f^{\ast \phi}$ is equimeasurable with $f$, which means
\be f^{\ast \phi} \in Eq(f)=\{ g\in L^1_+ \cap L^p \textrm{ with } \mu_g=\mu_f\}. \label{def-equi} \ee

(ii) $f^{\ast \phi}$ belongs to $\mathcal{E}_p$ with
\be \left\|  |v| f^{\ast \phi} \right\|_{L^1} \leq  C\left( \left\| \nabla \phi \right\|^2_{L^2}   \left\| f \right\|_{L^1}^\frac{2(2p-3)}{6(p-1)}  \left\| f \right\|_{L^p}^\frac{2p}{6(p-1)} + \left\| f \right\|_{L^1} \right). \label{control-kin}\ee

(iii) A function $Q$ as defined in Theorem \ref{thm} satisfies
$$F=Q^\ast \circ a_{\phi_Q} \textrm{  on  } \R_-^\ast \textrm{  and  }   Q=Q^{\ast \phi_Q} \textrm{  on  } \R^6.$$ 

(iv)  Let $f,g \in \mathcal{E}_p$ satisfying $\mu_g\leq \mu_f$ and $\phiÊ\in \Phi$, then
\be \int_{\R^6} \left(\sqrt{|v|^2+1} -1 +\phi(x) \right) \left( g- f^{\ast \phi}\right) dx dv \geq 0. \label{ineg-rearr} \ee
with equality if and only if $g= f^{\ast \phi}$. In particular
\be \mathcal{H}(g) \geq \mathcal{H}( f^{\ast \phi_g}) +\frac{1}{2} \left\| \nabla\phi_g -\nabla \phi_{f^{\ast\phi_g}} \right\|_{L^2}^2 \geq \mathcal{H}( f^{\ast \phi_g}), \label{ineg-H} \ee
with equality if and only if $g= f^{\ast \phi_g}$.  

\end{lemma}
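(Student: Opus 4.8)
The plan is to prove the four assertions in order, relying throughout on the explicit formula \eqref{formule-jac} for $a_\phi$, the change of variables \eqref{chg-var}, and the elementary properties of the Schwarz symmetrization. For \emph{(i)}, I would argue that $a_\phi$ is a strictly increasing $\mathcal{C}^1$-diffeomorphism from $(\inf\phi,0)$ onto $\R_+^\ast$ (Lemma \ref{jacobian}(ii)), hence measure-preserving in the sense that for any Borel set $A\subset\R_+$ the preimage $\{(x,v):\ e(x,v)<0,\ a_\phi(e(x,v))\in A\}$ has measure $\int_A ds$; composing the nonincreasing $f^\ast$ with this map then gives $\mu_{f^{\ast\phi}}=\mu_{f^\ast}=\mu_f$, so $f^{\ast\phi}\in Eq(f)$. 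For \emph{(ii)}, the membership $f^{\ast\phi}\in L^1\cap L^p$ is immediate from \emph{(i)}; for the kinetic bound \eqref{control-kin} I would pass to the variable $e=\sqrt{1+|v|^2}-1+\phi(x)$, write $\||v|f^{\ast\phi}\|_{L^1}$ as an integral over $\{e<0\}$ of $f^\ast\circ a_\phi(e)$ against $|v|$, bound $|v|\lesssim 1+|e-\phi(x)|$, and then use the Hölder/interpolation estimates already invoked in Lemma \ref{lem-ouf} together with the Hardy--Littlewood--Sobolev inequality to convert the spatial weight into $\|\nabla\phi\|_{L^2}^2$ times the stated powers of $\|f\|_{L^1}$ and $\|f\|_{L^p}$.

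Assertion \emph{(iii)} is essentially a consistency check: given the representation \eqref{exp-Q}, $Q$ depends on $(x,v)$ only through $e_{\phi_Q}(x,v)=\sqrt{1+|v|^2}-1+\phi_Q(x)$, so its super-level sets are of the form $\{e_{\phi_Q}<e\}$ up to null sets, and from the definition of $a_{\phi_Q}$ one identifies $\mu_Q(s)$ with $a_{\phi_Q}(F^{-1}(s))$ on the relevant range; inverting gives $F=Q^\ast\circ a_{\phi_Q}$ on $\R_-^\ast$, and substituting back into \eqref{def-rearrang} yields $Q^{\ast\phi_Q}=F(e_{\phi_Q})=Q$. Here I would use that $F$ is continuous and strictly decreasing on its support, which makes $F^{-1}$ well defined, and that $Q^\ast$ is continuous by the preceding lemma.

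The main work is \emph{(iv)}, the "bathtub"-type rearrangement inequality \eqref{ineg-rearr}. The strategy, following the classical argument, is to write the difference using the layer-cake decomposition: with $S_1(t)=\{f^{\ast\phi}\le t<g\}$ and $S_2(t)=\{g\le t<f^{\ast\phi}\}$, one has
$$\int_{\R^6} e_\phi\,(g-f^{\ast\phi})\,dxdv=\int_0^{+\infty}\!\!dt\left(\int_{S_1(t)} e_\phi\,dxdv-\int_{S_2(t)} e_\phi\,dxdv\right),$$
and since $f^{\ast\phi}$ is a nonincreasing function of $e_\phi$, on $S_1(t)$ one has $e_\phi\ge(f^\ast\circ a_\phi)^{-1}(t)$ while on $S_2(t)$ one has $e_\phi\le(f^\ast\circ a_\phi)^{-1}(t)$; replacing $e_\phi$ by this common threshold value in both integrals only decreases the right-hand side, after which the bracket becomes $(f^\ast\circ a_\phi)^{-1}(t)\bigl(\mathrm{meas}\,S_1(t)-\mathrm{meas}\,S_2(t)\bigr)$. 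The hypothesis $\mu_g\le\mu_f=\mu_{f^{\ast\phi}}$ forces $\mathrm{meas}\,S_1(t)\ge\mathrm{meas}\,S_2(t)$ for every $t$ (a standard consequence of comparing distribution functions), and the threshold $(f^\ast\circ a_\phi)^{-1}(t)$ is $\le 0$ since $\mathrm{Supp}\,f^\ast\circ a_\phi\subset(\inf\phi,0)$; the product is therefore $\ge 0$, giving \eqref{ineg-rearr}. Equality forces $\mathrm{meas}\,S_1(t)=\mathrm{meas}\,S_2(t)$ and, when $\mu_g=\mu_f$, equality in each step, hence $g=f^{\ast\phi}$ a.e. Finally, \eqref{ineg-H} follows by taking $g$ itself and $\phi=\phi_g$ in \eqref{ineg-rearr}: expanding $\mathcal{H}(g)-\mathcal{H}(f^{\ast\phi_g})$ and completing the square in the potential term, $\frac12\|\nabla\phi_g\|^2-\frac12\|\nabla\phi_{f^{\ast\phi_g}}\|^2=\int\nabla\phi_g\cdot(\nabla\phi_g-\nabla\phi_{f^{\ast\phi_g}})-\frac12\|\nabla\phi_g-\nabla\phi_{f^{\ast\phi_g}}\|^2$, and using the Poisson equation to rewrite the cross term as $-\int\phi_g(\rho_g-\rho_{f^{\ast\phi_g}})=-\int\phi_g(g-f^{\ast\phi_g})\,dxdv$, one recognizes exactly $\int e_{\phi_g}(g-f^{\ast\phi_g})\,dxdv\ge 0$. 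I expect the delicate points to be the justification that one may restrict the integrals to the region where the threshold is attained (needs the compact support and continuity of $f^\ast$, plus $L^1$-integrability coming from \emph{(ii)}) and the rigorous treatment of the equality case, which requires tracking that $f^\ast$ is \emph{strictly} decreasing on its support — available here because $Q^\ast$, and more generally $f^\ast$ for the relevant $f$, is continuous and the energy profile is strictly monotone.
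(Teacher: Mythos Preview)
Your overall architecture matches the paper's proof closely: part (i) via the change of variables \eqref{chg-var}, part (iii) by identifying the super-level sets of $Q$ with the sublevel sets of $e_{\phi_Q}$, part (iv) via the layer-cake decomposition with the sets $S_1(t),S_2(t)$, and the derivation of \eqref{ineg-H} from \eqref{ineg-rearr} by completing the square in the potential. Two points deserve correction.

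\textbf{Part (ii).} Your sketch ``bound $|v|\lesssim 1+|e-\phi(x)|$ and convert the spatial weight into $\|\nabla\phi\|_{L^2}^2$'' is on the right track but misses the essential bootstrap. Since $f^{\ast\phi}$ is supported where $e_\phi<0$, one has $\int |v|f^{\ast\phi}\le \|f\|_{L^1}-\int\phi\, f^{\ast\phi}=\|f\|_{L^1}+\int\nabla\phi\cdot\nabla\phi_{f^{\ast\phi}}$, and Cauchy--Schwarz gives $\|\nabla\phi\|_{L^2}\|\nabla\phi_{f^{\ast\phi}}\|_{L^2}$. The second factor, however, is not directly controlled by $\|f\|_{L^1},\|f\|_{L^p}$: the interpolation \eqref{interpolation} bounds it by $C\|f\|_{L^1}^{\alpha}\|f\|_{L^p}^{\beta}\,\||v|f^{\ast\phi}\|_{L^1}^{1/2}$, which still contains the unknown kinetic moment. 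You therefore obtain a quadratic inequality in $X=\||v|f^{\ast\phi}\|_{L^1}^{1/2}$, namely $X^2-K(\phi,f)\,X-\|f\|_{L^1}\le 0$, and solving it yields \eqref{control-kin}. This closing step is what the paper does and is not visible in your plan.

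\textbf{Part (iv).} You have the measure comparison backwards: from $\mu_g\le\mu_f=\mu_{f^{\ast\phi}}$ one gets $\mathrm{meas}\,S_1(t)-\mathrm{meas}\,S_2(t)=\mu_g(t)-\mu_{f^{\ast\phi}}(t)\le 0$, not $\ge 0$. With your simultaneous replacement in both integrals the bracket becomes $(f^\ast\circ a_\phi)^{-1}(t)\bigl(\mathrm{meas}\,S_1(t)-\mathrm{meas}\,S_2(t)\bigr)$, which is indeed $\ge 0$ as the product of two nonpositive numbers, so the inequality survives, but your stated sign would give the wrong conclusion. More importantly, this double replacement loses information needed for the equality case: the lower bound you reach is $0$ whenever $\mu_g=\mu_f$, regardless of whether $g=f^{\ast\phi}$. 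The paper instead replaces $e_\phi$ by the threshold only on $S_1(t)$, then uses $\mathrm{meas}\,S_1(t)\le\mathrm{meas}\,S_2(t)$ together with the nonpositivity of the threshold to obtain
\[
T\ \ge\ \int_0^{+\infty}\!\!dt\int_{S_2(t)}\Bigl((f^\ast\circ a_\phi)^{-1}(t)-e_\phi(x,v)\Bigr)\,dxdv,
\]
whose integrand is \emph{strictly} positive on $S_2(t)$ by \eqref{inverse1}. Equality $T=0$ then forces $\mathrm{meas}\,S_2(t)=0$ for a.e.\ $t$, hence also $\mathrm{meas}\,S_1(t)=0$, and $g=f^{\ast\phi}$. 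Your equality argument, which appeals to ``$f^\ast$ strictly decreasing on its support'', is not needed and in fact not available for a general $f\in\mathcal{E}_p$.
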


\begin{proof}[Proof of Lemma \ref{lem-rearrang}]

\ni
{\em (i) Equimeasurability:} Let $\beta\in \mathcal{C}^1(\R_+,\R_+)$ satisfying $\beta(0)=0$. Using the change of variable given by \eqref{chg-var} we have
$$\int_{\R^6} \beta \left( f^{\ast \phi} (x,v) \right) dx dv = \int^{+\infty}_0 \beta \left( f^\ast(s) \right) ds = \int_{\R^6} \beta \left( f (x,v) \right) dx dv,$$
which gives the equimeasurability of $f$ and $f^{\ast \phi}$.\\

\ni
{\em (ii) Control of the kinetic energy:} From the definition of $f^{\ast \phi}$, see \eqref{def-rearrang}, we have
$$\int_{\R^6} \left( \sqrt{|v|^2+1} -1 +\phi(x) \right) f^{\ast \phi} \leq 0,$$
and 
$$ -\int_{\R^6} \phi(x)f^{\ast \phi}(x,v)dx dv  =  \int_{\R^3} \nabla \phi (x)\cdot  \nabla \phi_{f^{\ast \phi}}(x)  dx.$$
Thus, from the Cauchy-Schwartz inequality 
\be \begin{array}{rcl}
 \dis  \int_{\R^6}|v| f^{\ast \phi}  & \leq & \dis \int_{\R^6} \left( \sqrt{|v|^2+1} -1 \right) f^{\ast \phi} +  \left\| f^{\ast\phi} \right\|_{L^1} \\
 \\
 & \leq & \dis \left\| \nabla \phi \right\|_{L^2} \left\| \nabla \phi_{f^{\ast \phi}} \right\|_{L^2} +\left\| f \right\|_{L^1}.
 \end{array} \label{cont1}\ee
Moreover, the interpolation inequality \eqref{interpolation} and the equimesurability of $f$ and $f^{\ast \phi}$ yield
\be \left\| \nabla \phi_{f^{\ast \phi}} \right\|_{L^2} \leq  C \left\| f \right\|_{L^1}^\frac{2p-3}{6(p-1)}  \left\| f \right\|_{L^p}^\frac{p}{6(p-1)}  \left\| |v| f^{\ast \phi}  \right\|_{L^1}^\frac{1}{2}.\label{cont2} \ee
Hence the inequalities \eqref{cont1} and \eqref{cont2} imply
$$ \int_{\R^6}|v| f^{\ast \phi} -K(\phi,f) \left(\int_{\R^6}|v| f^{\ast \phi}\right)^\frac{1}{2} -\left\| f \right\|_{L^1}\leq 0,$$
with 
$$K(\phi,f)=C\left\| \nabla \phi \right\|_{L^2}   \left\| f \right\|_{L^1}^\frac{2p-3}{6(p-1)}  \left\| f \right\|_{L^p}^\frac{p}{6(p-1)}.$$
The control of the kinetic energy \eqref{control-kin} follows.\\

\ni
{\em (iii) The profil Q, fixed point for the rearrangement $f^{\ast \phi_f}$:} From the assumptions on $Q$ in theorem \ref{thm}, the measure of the support of $Q$ is $a_{\phi_Q}(e_Q)$ and the function $F$ is a strictly decreasing $\mathcal{C}^1$-diffeomorphism from $[\min \phi_Q,e_Q]$ onto $[0,\| Q\|_\infty]$. Thus we have for all $e\in [\min \phi_Q,e_0],$
$$\begin{array} {rcl}
\dis \ \ a_{\phi_Q}(e) & = & \dis \textrm{meas} \left\{(x,v)\in \R^6 \ : \ F\left( \sqrt{|v|^2+1} -1 +\phi_Q(x)  \right) >F(e)\right\} \\ \\
 & = & \dis \textrm{meas} \left\{(x,v)\in\RR^6 \ : \ Q(x,v) >F(e)\right\} \\ \\
      & = & \dis \textrm{meas} \left\{s\in\R^\ast_+ \ : \ Q^\ast(s) >F(e)\right\}, \end{array}$$
from the definition of the Schwarz symmetrization $Q^\ast$. We rewrite this equality: for all $a\in [0,\| Q\|_\infty]$
\be a_{\phi_Q}\circ F^{-1}(a)=  \textrm{meas}\left\{s\in\R^\ast_+ \ : \ Q^\ast(s) >a\right\}.\label{truc}\ee
We know that $Q^\ast$ is a decreasing function on $[0,meas(Supp(Q))]=[0,a_{\phi_Q}(e_Q)]$. The equality \eqref{truc} and the continuity of $a_{\phi_Q}\circ F^{-1}$ imply the strict decrease of $Q^\ast$.  Moreover, since the Schwarz symmetrisation conserves the continuity, $Q^\ast$ is continuous on $[0,a_{\phi_Q}(e_Q)]$. We conclude that, for all $a\in[0,\| Q\|_\infty]$,
$$Q^\ast \left( \textrm{meas}\left\{s\in\R^\ast_+ \ : \ Q^\ast(s) >a\right\} \right) =a.$$
We deduce that for all $e\in [\min \phi_Q,e_Q]$, $F(e)=Q^\ast \circ a_{\phi_Q}(e)$, which implies in particular  $0=F(e_Q)=Q^\ast \circ a_{\phi_Q}(e_Q)$. But $Q^\ast \circ a_{\phi_Q}$ is discreasing on $R^\ast_-$ and we also have
\be F=Q^\ast \circ a_{\phi_Q} \textrm{ on } [\min \phi_Q,0).\label{equality}\ee
Let now for all $(x,v)\in \R^6$
$$e(x,v):=\sqrt{|v|^2+1} -1 +\phi_Q(x)\in [\min \phi_Q,+\infty).$$
If $ e(x,v) \leq 0$, from \eqref{equality}, we have $Q(x,v)=F(e(x,v))=Q^{\ast \phi_Q}(x,v)$. If $e(x,v)>0$, from the definition of $Q^{\ast \phi_Q}$, we have $Q^{\ast \phi_Q}(x,v)=0=F(e(x,v))=Q(x,v)$.\\

\ni
{\em (iv) The Hamiltonian of the rearrangement: } Let $f,g\in\mathcal{E}_p$. Then 
$$\begin{array}{rcl}
\dis \left\| \nabla\phi_g -\nabla \phi_{f} \right\|_{L^2}^2 & = &\dis  -\int_{\RR^6} (\phi_g-\phi_f)(g-f)dx dv \\ \\
 & = &\dis  -\int_{\RR^6} \phi_g g dx dv - \int_{\RR^6} \phi_f f dx dv+2 \int_{\RR^6} \phi_g f dx dv,
\end{array}$$
where we use $\int \phi_f g =-\int \na \phi_f \na \phi_g =\int \phi_g f$. Finally we have
$$  \left\| \nabla\phi_g -\nabla \phi_{f} \right\|_{L^2}^2= -2 \int_{\RR^6} \phi_g (g- f) dx dv+\left\| \nabla \phi_{f} \right\|_{L^2}^2-\left\| \nabla\phi_g \right\|_{L^2}^2.$$
Thus we obtain
$$\begin{array}{rcl}
\dis \mathcal{H}(g)-\mathcal{H}( f) & = & \dis \int_{\R^6} \left(\sqrt{|v|^2+1} -1\right) \left( g- f\right) dx dv+\frac{1}{2}\left(  \left\| \nabla \phi_{f} \right\|_{L^2}^2-\left\| \nabla\phi_g \right\|_{L^2}^2 \right) \\ \\
& = & \dis \int_{\R^6} \left(\sqrt{|v|^2+1} -1 +\phi_g \right) \left( g- f \right) dx dv+\frac{1}{2} \left\| \nabla\phi_g -\nabla \phi_{f} \right\|_{L^2}^2,\end{array} $$
where the Hamiltonian is defined by \eqref{def-H}. We apply this equality to $g, f^{\ast \phi_g} \in \mathcal{E}_p$ with $g$ nonzero and $f\in \mathcal{E}_p$ to get

$$\mathcal{H}(g)=\mathcal{H}( f^{\ast \phi_g}) +\frac{1}{2} \left\| \nabla\phi_g -\nabla \phi_{f^{\ast\phi_g}} \right\|_{L^2}^2 +\int_{\R^6} \left(\sqrt{|v|^2+1} -1 +\phi_g \right) \left( g- f^{\ast \phi_g}\right) dx dv .$$
Hence, to prove the inequality \eqref{ineg-H}, it is sufficient to prove \eqref{ineg-rearr}. 

Let $f\in \mathcal{E}_p$, $g\in \mathcal{E}_p$, $\phi \in \Phi_q$ and let $T$ defined by
\be T= \int_{\R^6} \left(\sqrt{|v|^2+1} -1 +\phi(x) \right) \left( g- f^{\ast \phi}\right) dx dv.\ee
The claim $T\geq 0$ is a classical inequality for the rearrangements (see \cite{LL} for the Schwarz rearrangement for example and \cite{LMR-inv} for the new rearrangement). To simplify the notation we define 
$$e(x,v)=\sqrt{|v|^2+1} -1 +\phi(x),$$
and we use the layer cake representation
$$f(x,v)=\int_{\R_+} \mathbf{1}_{t<f(x,y)}dt$$
to find
\be T=\int_0^{+\infty} dt \left(      \int_{S_1(t)} e(x,v) dxdv- \int_{S_2(t)} e(x,v) dxdv\right),\label{decomp-T}\ee
where 
$$S_1(t)=\left\{  (x,v)\in\R^6, \ f^{\ast\phi}(x,v) \leq t< g(x,v)\right\},$$
$$S_2(t)=\left\{  (x,v)\in\R^6, \ g(x,v) \leq t< f^{\ast\phi}(x,v)\right\}.$$
Now, from the properties about the pseudo-inverse of $f^\ast \circ a_\phi$, we deduce that 
$$e(x,v)\geq \left(f^\ast \circ a_\phi \right)^{-1}(t), \ \ \textrm{for all } (x,v) \in S_1(t),$$
and, from $\mu_g\leq \mu_f$, we have for all $t\in \R^+$, $meas(S_1(t)) \leq meas(S_2(t))$. Thus, since $\left(f^\ast \circ a_\phi \right)^{-1}\leq 0$,
$$T \geq \int_0^{+\infty} dt \int_{S_2(t)} \left( \left(f^\ast \circ a_\phi \right)^{-1}(t)-e(x,v) \right) dxdv. $$
But,
$$e(x,v)< \left(f^\ast \circ a_\phi \right)^{-1}(t), \ \ \textrm{for all } (x,v) \in S_2(t),$$
which implies on the one hand that $T\geq 0$ and in the other hand that, if $T=0$, we have, for almost all $t\in \R_+$, $meas(S_2(t))=0$. Thus we have, almost everywhere $meas(S_1(t))=meas(S_2(t))=0$  which gives $g=f^{\ast\phi}$.

\end{proof}
Now we give a lemma in which states a method to inverse clearly the function $f^\ast \circ a_\phi$.

\begin{lemma}[Pseudo inverse of $f^\ast \circ a_\phi$]\label{lem-pseudo} Let $f\in \mathcal{E}_p$, nonzero, with $p>\frac{3}{2}$ and $\phi\in \Phi_q$. Let $f^\ast$ the Schwartz rearrangement of $f$ in $\R^6$. We define the pseudo inverse of $f^\ast \circ a_\phi$ for $s\in(0,\| f \|_{L^\infty})$ as
\be (f^\ast \circ a_\phi)^{-1} (s)=\sup\{ e \in (\inf \phi, 0) \ : \ f^\ast \circ a_\phi (e)>s \}. \label{def-pseudo} \ee
Then $(f^\ast \circ a_\phi)^{-1} $ is a nonincreasing function from $(0,\| f \|_{L^\infty})$ to $(\inf \phi, 0)$ and for all $(x,v)\in\RR^6$ and $s\in(0,\| f \|_{L^\infty})$,
\be f^{\ast\phi}(x,v)>s \Rightarrow \sqrt{|v|^2+1} -1 +\phi(x) \leq  (f^\ast \circ a_\phi)^{-1} (s), \label{inverse1} \ee
and
\be f^{\ast\phi}(x,v)\leq s \Rightarrow \sqrt{|v|^2+1} -1 +\phi(x) \geq  (f^\ast \circ a_\phi)^{-1} (s). \label{inverse2} \ee
\end{lemma}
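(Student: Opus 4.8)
The statement is really a one-variable fact about pseudo-inverses once one observes that $\psi := f^\ast \circ a_\phi$ is a nonincreasing function on $(\inf \phi, 0)$: by Lemma \ref{jacobian}(ii) the map $a_\phi$ is a strictly increasing $\mathcal{C}^1$-diffeomorphism from $(\inf \phi, 0)$ onto $\RR_+^\ast$, and $f^\ast$ is nonincreasing, so $e_1 < e_2$ gives $a_\phi(e_1) < a_\phi(e_2)$ and hence $\psi(e_1) \geq \psi(e_2)$. The plan is therefore: (i) show that for every $s \in (0, \| f \|_{L^\infty})$ the set $A_s := \{ e \in (\inf \phi, 0) : \psi(e) > s \}$ appearing in \eqref{def-pseudo} is nonempty with supremum strictly negative, so that $(f^\ast \circ a_\phi)^{-1}$ is well defined and $(\inf \phi, 0)$-valued; (ii) deduce its monotonicity; (iii) prove \eqref{inverse1} and \eqref{inverse2} by an elementary case analysis separating $e(x,v) \geq 0$ from $e(x,v) < 0$.

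For (i) and (ii) I would exploit the boundary behaviour of $a_\phi$. As $e \to (\inf \phi)^+$ one has $a_\phi(e) \to 0^+$ by Lemma \ref{jacobian}(ii), hence $\psi(e) = f^\ast(a_\phi(e)) \to \lim_{t \to 0^+} f^\ast(t) = \| f \|_{L^\infty}$, the last equality being the standard fact that a decreasing rearrangement tends to the essential supremum near the origin; since $s < \| f \|_{L^\infty}$ this already gives $A_s \neq \emptyset$. Similarly $a_\phi(e) \to +\infty$ as $e \to 0^-$, and $f^\ast \in L^1(\RR_+)$ is nonincreasing hence tends to $0$ at infinity, so $\psi(e) \to 0 < s$ as $e \to 0^-$; combined with the monotonicity of $\psi$ this forces $\sup A_s < 0$, while $\sup A_s > \inf \phi$ is automatic from $\emptyset \neq A_s \subset (\inf \phi, 0)$. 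Monotonicity of the pseudo-inverse then follows from the inclusion $A_{s_2} \subset A_{s_1}$ for $s_1 \leq s_2$, which yields $(f^\ast \circ a_\phi)^{-1}(s_1) \geq (f^\ast \circ a_\phi)^{-1}(s_2)$.

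For (iii), recall that $f^{\ast\phi}(x,v) = \psi(e(x,v))$ when $e(x,v) := \sqrt{|v|^2+1} - 1 + \phi(x) < 0$ and $f^{\ast\phi}(x,v) = 0$ when $e(x,v) \geq 0$. To prove \eqref{inverse1}, assume $f^{\ast\phi}(x,v) > s > 0$; then necessarily $e(x,v) < 0$ (otherwise $f^{\ast\phi}(x,v)$ would vanish), so $\psi(e(x,v)) > s$, i.e. $e(x,v) \in A_s$, whence $e(x,v) \leq \sup A_s = (f^\ast \circ a_\phi)^{-1}(s)$. To prove \eqref{inverse2}, assume $f^{\ast\phi}(x,v) \leq s$: if $e(x,v) \geq 0$ then $e(x,v) > (f^\ast \circ a_\phi)^{-1}(s)$ trivially since the right-hand side is negative by (i); if $e(x,v) < 0$, argue by contradiction assuming $e(x,v) < \sup A_s$, so that there is $e' \in A_s$ with $e' > e(x,v)$. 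Since $\psi$ is nonincreasing, $\psi(e(x,v)) \geq \psi(e') > s \geq f^{\ast\phi}(x,v) = \psi(e(x,v))$, a contradiction; hence $e(x,v) \geq \sup A_s$, which is \eqref{inverse2}.

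The lemma carries no real difficulty: it is a bookkeeping statement about rearrangements. The only points that need minimal care are the two one-sided limits of $\psi$ at the endpoints of $(\inf \phi, 0)$ — where I lean on $a_\phi(e) \to 0^+$ and $a_\phi(e) \to +\infty$ from Lemma \ref{jacobian}(ii) together with $\lim_{t \to 0^+} f^\ast(t) = \| f \|_{L^\infty}$ and $\lim_{t \to +\infty} f^\ast(t) = 0$ — and the systematic separation of the region $\{ e(x,v) \geq 0 \}$, on which $f^{\ast\phi}$ is set to zero, from the region $\{ e(x,v) < 0 \}$, on which $f^{\ast\phi}(x,v) = \psi(e(x,v))$.
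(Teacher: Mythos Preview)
Your argument is correct and complete. The paper itself does not give a proof of this lemma at all: it simply writes ``For the proof of this lemma, we refer to \cite{LMR-inv}.'' Your self-contained treatment---reducing everything to the monotonicity of $\psi=f^\ast\circ a_\phi$, checking the two endpoint limits via Lemma~\ref{jacobian}(ii), and handling \eqref{inverse1}--\eqref{inverse2} by the obvious case split on the sign of $e(x,v)$---is exactly the natural elementary proof and matches what one finds in the reference.
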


\bs
\ni
For the proof of this lemma, we refer to \cite{LMR-inv}.

\end{document}